\newcommand{\region}{\mathcal{U}}
\newcommand{\Px}{\mathcal{P}}
\newcommand{\PXY}{\mathcal{P}_{XY}}
\newcommand{\PXYR}[1]{\mathcal{P}_{#1}}
\newcommand{\Risk}{{\rm R}_{\loss}}
\newcommand{\Rademacher}{\hat{\phi}}
\newcommand{\loss}{\ell} 
\newcommand{\maxloss}{\bar{\loss}} 
\newcommand{\transform}{\psi_{\loss}} 
\newcommand{\tsyba}{\alpha} 
\newcommand{\tsybca}{a} 
\newcommand{\tsybb}{\beta} 
\newcommand{\tsybcb}{b} 
\renewcommand{\dim}{d} 
\newcommand{\entc}{q} 
\newcommand{\entrho}{\rho} 
\newcommand{\g}{g}
\newcommand{\dc}{\theta} 
\newcommand{\capacity}{\tau} 
\newcommand{\mixcap}{\chi_{\loss}} 
\newcommand{\exreals}{\bar{\mathbb{R}}}
\newcommand{\D}{{\rm D}} 
\newcommand{\dist}{\Delta} 
\newcommand{\radius}{{\rm radius}} 
\newcommand{\Transform}{\Gamma_{\loss}} 
\newcommand{\InvTransform}{\Epsilon_{\loss}}
\newcommand{\Functions}{\F^{*}} 
\newcommand{\FunctionsXY}{\G^{*}} 
\newcommand{\Env}{{\rm F}} 
\newcommand{\bound}{\nu_{0}} 
\newcommand{\BJM}{\Psi_{\loss}}
\newcommand{\zo}{{\scriptscriptstyle{01}}}
\newcommand{\spec}[1]{\mathring{#1}} 
\newcommand{\phiinv}[1]{\ddot{#1}} 
\newcommand{\restinv}[1]{\dot{#1}}
\newcommand{\ERM}{{\rm ERM}}
\newcommand{\SC}{{\rm M}}
\newcommand{\vc}{{\rm vc}}
\newcommand{\covering}{{\cal{N}}}
\newcommand{\G}{{\cal{G}}}
\newcommand{\conv}{{\rm conv}}
\newcommand{\Fbound}{\bar{B}} 
\newcommand{\metricbound}{\bar{d}_{\loss}} 
\newcommand{\metric}{d_{\loss}} 
\newcommand{\Lip}{L} 
\newcommand{\convc}{C_{\loss}} 
\newcommand{\convr}{r_{\loss}} 
\newcommand{\modconv}{\bar{\delta}_{\loss}} 
\newcommand{\Bracket}[1]{[#1]} 
\newcommand{\Log}{{\rm Log}}
\newcommand{\Borel}{{\cal B}}
\newcommand{\BorelX}{{\cal B}_{\X}}
\newcommand{\ACAL}{Algorithm 1}
\newcommand{\conf}{\delta}
\newcommand{\eps}{\varepsilon}
\newcommand{\sfun}{\mathfrak{s}}
\newcommand{\X}{\mathcal X} 
\newcommand{\Y}{\mathcal Y} 
\newcommand{\alg}{\mathcal A} 
\renewcommand{\H}{\mathcal H} 
\renewcommand{\L}{\mathcal L} 
\newcommand{\U}{\mathcal U} 
\newcommand{\target}{f^{\star}} 
\newcommand{\er}{{\rm er}} 
\newcommand{\Ball}{{\rm B}}
\DeclareSymbolFont{bbold}{U}{bbold}{m}{n}
\DeclareSymbolFontAlphabet{\mathbbold}{bbold}
\newcommand{\ind}{\mathbbold{1}}
\newcommand{\C}{\mathcal{C}} 
\newcommand{\F}{\mathcal{F}}
\newcommand{\sign}{{\rm sign}} 
\renewcommand{\P}{\mathbb P}
\newcommand{\nats}{\mathbb{N}} 
\newcommand{\reals}{\mathbb{R}} 
\newcommand{\ints}{\mathbb{Z}}
\newcommand{\Epsilon}{\mathscr{E}}
\newcommand{\E}{\mathbb E}
\newcommand{\Data}{\mathbf{\mathcal Z}}
\newcommand{\diam}{{\rm diam}}
\newcommand{\DIS}{{\rm DIS}}
\newcommand{\DISF}{{\rm DISF}}
\newcommand{\DISFXY}{\overline{\DISF}}
\newcommand{\DISV}{D}
\newcommand{\argmin}{\mathop{\rm argmin}}
\newcommand{\vast}{\bBigg@{3}}
\newcommand{\Vast}{\bBigg@{4}}
\newcommand{\longversion}[1]{}
\newcommand{\ignore}[1]{}
\newcommand{\todo}[1]{}
\newcommand{\oldstuff}[1]{}
\newtheorem{theorem}{Theorem}
\newtheorem{corollary}[theorem]{Corollary}
\newtheorem{lemma}[theorem]{Lemma}
\newtheorem{definition}[theorem]{Definition}
\newtheorem{condition}[theorem]{Condition}
\newsavebox{\savepar}
\newenvironment{bigboxit}{\begin{center}\begin{lrbox}{\savepar}
\begin{minipage}[h]{4.6in}
\normalfont
\begin{flushleft}}
{\end{flushleft}\end{minipage}\end{lrbox}\fbox{\usebox{\savepar}}
\end{center}}
\providecommand{\upthmend}[1]{}
  \renewcommand{\upthmend}[1]{%
    {\unskip\nobreak\hfil\penalty 50%
      \vskip #1\hskip 2em\hbox{}\nobreak\hfil $\diamond$%
      \parfillskip=0pt \finalhyphendemerits=0 \par}}
\begin{document}

\begin{frontmatter}

\title{Surrogate Losses in Passive and Active Learning}
\runtitle{Surrogate Losses in Passive and Active Learning}

\author{
\fnms{Steve} \snm{Hanneke}\thanksref{t1}\corref{}\ead[label=e1]{steve.hanneke@gmail.com}}
\and
\author{\fnms{Liu} \snm{Yang}\thanksref{m1}\ead[label=e2]{liu.yang0900@outlook.com}}
\address{\thanksmark{t1}Toyota Technological Institute at Chicago\\\printead{e1}}
\address{\thanksmark{m1}\printead{e2}}

\runauthor{Hanneke and Yang}

\begin{abstract}
Active learning is a type of sequential design for supervised machine learning, in which the learning
algorithm sequentially requests the labels of selected
instances from a large pool of unlabeled data points.
The objective is to produce a classifier of relatively low risk, as measured under the $0$-$1$ loss, ideally using fewer label requests
than the number of random labeled data points sufficient to achieve the same.
This work investigates the potential
uses of surrogate loss functions in the context of active learning.
Specifically, it presents an
active learning algorithm based on an arbitrary classification-calibrated surrogate loss function,
along with an analysis of the number of label requests
sufficient
for the classifier returned by the algorithm to achieve a given risk
under the $0$-$1$ loss.
Interestingly, these results cannot be obtained by simply
optimizing the surrogate risk via active learning to an
extent sufficient to provide a guarantee on the $0$-$1$ loss,
as is common practice in the
analysis of surrogate losses for passive learning.
Some of the results have additional implications for the
use of surrogate losses in passive learning.
\end{abstract}

\begin{keyword}[class=AMS]
\kwd[Primary ]{62L05}
\kwd{68Q32}
\kwd{62H30}
\kwd{68T05}
\kwd[; secondary ]{68T10}
\kwd{68Q10}
\kwd{68Q25}
\kwd{68W40}
\kwd{62G99}
\end{keyword}

\begin{keyword}
\kwd{active learning}
\kwd{sequential design}
\kwd{selective sampling}
\kwd{statistical learning theory}
\kwd{surrogate loss functions}
\kwd{classification}
\end{keyword}

\end{frontmatter}


\section{Introduction}
\label{sec:intro}

In supervised machine learning, we are tasked with learning a classifier whose probability of making a mistake (i.e., error rate) is small.
The study of when it is possible to learn an accurate classifier
via a computationally efficient algorithm,
and how to go about doing so,
 is a subtle and difficult topic, owing largely to nonconvexity of the loss function: namely, the
$0$-$1$ loss.  While there is certainly an active literature on developing computationally efficient methods
that succeed at this task, even under various noise conditions \citep*[e.g.,][]{angluin:88,kearns:94a,kearns:98,kalai:05},
it seems fair to say that at present, many of these advances have not yet reached
the level of robustness, efficiency, and simplicity required for most applications.
In the mean time, practitioners have turned to various heuristics in the design of
practical learning methods, in attempts to circumvent these tough computational problems.
One of the most common such heuristics is the use of a convex \emph{surrogate} loss function
in place of the $0$-$1$ loss in various optimizations performed by the learning method.
The convexity of the surrogate loss allows these optimizations to be performed efficiently,
so that the methods can be applied within a reasonable execution time,
using
modest computational resources.
Although classifiers arrived at in this way are not always guaranteed to be good classifiers when
performance is measured under the $0$-$1$ loss, in practice this heuristic has often proven quite
effective.  In light of this fact, most modern learning methods either explicitly make use of a
surrogate loss in the formulation of optimization problems (e.g., SVM), or implicitly optimize a
surrogate loss via iterative descent (e.g., AdaBoost).
Indeed, the choice of a surrogate loss is often as fundamental
a part of the process of approaching a learning problem as the
choice of hypothesis class or learning bias.
Thus it seems 
essential that we come to some understanding of how best to make use of
surrogate losses in the design of learning methods, so that in the favorable scenario
that this heuristic actually does work, we have methods taking full advantage of it.

In this work, we are primarily interested in how best to use surrogate losses in the context of \emph{active learning},
which is a type of sequential design in which the learning algorithm is presented with a large pool
of unlabeled data points (i.e., only the covariates are observable), and can sequentially request to observe the labels (response variables) 
of individual instances from the pool.  The objective in active learning is to produce a classifier of low error rate
while accessing a smaller number of labels than would be required for a method based on random labeled data points (i.e., \emph{passive learning}) to achieve the same.
We take as our starting point that we have 
committed to use a given surrogate loss,
and we restrict our attention to just those scenarios in which this heuristic actually \emph{does} work:
specifically, where the minimizer of the surrogate risk also minimizes the error rate, and is contained in our function class.
We are then interested in how best to make use of the surrogate loss toward the goal of producing a
classifier with relatively small error rate.
%
%

In passive learning, the most common approach to using a surrogate loss is
to minimize the empirical surrogate risk on the labeled data.  One can then 
derive guarantees on the error rate of this strategy by bounding the 
surrogate risk via concentration inequalities, and then converting these 
guarantees on the surrogate risk into guarantees on the error rate,
a technique pioneered by \citet*{bartlett:06} and \citet*{zhang:04}.
Interestingly, we find that this direct approach is \emph{not} appropriate 
in the context of active learning: that is, optimizing the surrogate risk
to a sufficient extent to guarantee small error rate generally \emph{cannot} 
yield large improvements over passive learning.
While at first this finding might seem quite negative, it leaves open
the possibility of methods making use of the surrogate loss in alternative
ways, which still guarantee low error rate and computational efficiency, 
but for which these guarantees arise via a less direct route.
Indeed, since we are interested in the surrogate loss only insofar
as it helps us to optimize the error rate with computational efficiency,
we may even consider methods that provide \emph{no} guarantees on
the achieved surrogate risk whatsoever (even in the limit).

In the present work, we propose such an alternative approach to 
the use of surrogate losses in active learning.  
The insight leading to this approach is that,
if we are truly only interested in achieving
low $0$-$1$ loss, then once we have identified the \emph{sign}
of the optimal function at a given point, we need not
optimize the value of the function at that location any further,
and can therefore focus the label requests elsewhere.
Based on this insight, 
we construct an active learning strategy that
optimizes the empirical surrogate risk over increasingly
focused subsets of the instance space, and derive bounds
on the number of label requests the method requires to achieve
a given error rate.  In many cases, these bounds reflect strong
improvements over the analogous results for passive learning 
by minimizing the given surrogate loss.
As a byproduct of this analysis, we find this insight has
implications for the use of certain surrogate losses in passive
learning as well, though to a lesser extent.


Most of the mathematical tools used in this analysis are inspired by techniques
for the study of active learning developed over the past decade \citep*{balcan:09,hanneke:thesis,hanneke:11a,koltchinskii:10},
in conjunction with the results of \citet*{bartlett:06} bounding the excess error rate in terms of
the excess surrogate risk, and the works of \citet*{koltchinskii:06} and \citet*{bartlett:05}
on local Rademacher complexity bounds.

\subsection{Related Work}
\label{subsec:related}

There are many previous works on the topic of surrogate losses
in the context of passive learning.  Perhaps the most relevant
to our results below are the work of \citet*{bartlett:06}
and the related work of \citet*{zhang:04}.  These develop
a general theory for converting results on excess risk under
the surrogate loss into results on excess risk under the
$0$-$1$ loss.  Below, we describe the conclusions of that
work in detail, and we build on many of the basic definitions
and insights pioneered in it. 

Another related line of research, explored by
\citet*{audibert:07}, studies ``plug-in rules,''
which make use of regression estimates obtained by
optimizing a surrogate loss,
and are then rounded to $\{-1,+1\}$ values to obtain classifiers.
They prove minimax optimality results under smoothness assumptions on the actual regression function.
Under similar conditions, \citet*{minsker:12} studies an analogous active learning method,
which again makes use of a surrogate loss, and obtains improvements in label complexity
compared to the passive learning method of \citet*{audibert:07}.  Minsker's active learning 
work has also recently been strengthened and extended in \citep*{hanneke:np1,locatelli:07}.
Remarkably, as discussed by \citet*{audibert:07}, 
the rates of convergence obtained in such works are often better than 
the known results for methods that directly optimize the $0$-$1$ loss,
under analogous complexity assumptions on the Bayes optimal classifier
(rather than the regression function).
As a result, 
these works
raise interesting questions about whether the general analysis of
methods that optimize the $0$-$1$ loss remain tight under complexity
assumptions on the regression function, and potentially also about
the design of optimal methods for classification when assumptions
are phrased in terms of the regression function.  

In the present work, we focus our attention on scenarios where
the main purpose of using the surrogate loss is to ease the computational problems
associated with minimizing an empirical risk, so that
our statistical results might typically be strongest when the surrogate loss is
the $0$-$1$ loss itself, even if in some cases stronger results might in principle be achievable
from assumptions involving the surrogate loss 
\citep*[as in][]{audibert:07,minsker:12}.
As such, in the specific scenarios studied by \citet*{minsker:12},
our results are generally not optimal;
rather, the main strength of our analysis lies in its generality.
In this sense,
our results are more closely related to those of \citet*{bartlett:06} and \citet*{zhang:04}
than to those of \citet*{audibert:07} and \citet*{minsker:12}.
That said, we note that several important elements of the design and analysis of the active learning
method below are already hinted at to some extent in the work of \citet*{minsker:12}, 
albeit in a form that also relies heavily on the assumptions and function class specific to that work; 
the present work takes the general perspective, developing theory and methods applicable to 
any function class and surrogate loss function.

Our approach to the design of active learning methods below 
follows the well-studied strategy of \emph{disagreement-based}
active learning, an approach pioneered by \citet*{balcan:09},
and further developed by several later works \citep*[e.g.,][]{dasgupta:07,hanneke:11a,hanneke:12a,koltchinskii:10}.
The basic strategy maintains a set $V$ of plausible candidates 
for the optimal classifier, and requests the labels of samples
disagreed-upon by classifiers in $V$; it periodically updates 
the set $V$ by eliminating classifiers making an excessive number
of mistakes on the requested labels.  The analysis of the number
of label requests sufficient for this technique to achieve a given
error rate in the general case was explored by
\citet*{hanneke:07b,hanneke:11a}, \citet*{dasgupta:07}, \citet*{koltchinskii:10}, and others,
and the results are typically expressed in terms of a quantity
known as the \emph{disagreement coefficient}.  In the present
work, we modify the disagreement-based active learning strategy
by updating the set $V$, not based on the number of mistakes,
but rather based on the empirical surrogate risk on the queried
samples.  We derive bounds on the number of label requests
this method requires to achieve a given excess error rate, in terms
of properties of the surrogate loss.  In particular, when the surrogate
loss is chosen to be the $0$-$1$ loss itself, this method behaves
nearly-identically to previously-studied methods \citep*{koltchinskii:10,hanneke:12a},
and in this special case, our results match those
established in the literature (with some small refinements in the logarithmic factors).

There are several interesting works on active learning methods that
optimize a general loss function.  \citet*{beygelzimer:09} and \citet*{koltchinskii:10}
have both proposed such methods, and analyzed the number of
label requests the methods make before achieving a given excess risk
for that loss function.  The former method is based on importance weighted
sampling, while the latter makes clear an interesting connection to
local Rademacher complexities.  One natural idea for approaching
the problem of active learning with a surrogate loss is to run one of
these methods with the surrogate loss. The results of
\citet*{bartlett:06} allow us to determine a sufficiently
small value $\gamma$ such that any function with excess surrogate
risk at most $\gamma$ has excess error rate at most $\eps$.
Thus, by evaluating the established bounds on the number
of label requests sufficient for these active learning
methods to achieve excess surrogate risk $\gamma$,
we immediately have a result on the number of label requests
sufficient for them to achieve excess error rate $\eps$.
This is a common strategy for constructing and analyzing
passive learning methods based on a surrogate loss.
However, as we discuss below, this strategy does not
generally lead to the best results for active learning,
and often will not be much better
than results available for related passive learning methods.
Instead, the method we propose
does not aim to optimize the surrogate risk overall,
but rather optimizes it on a sequence of increasingly-focused subregions
of the instance space, and thereby 
provides a smaller bound on the number of label requests sufficient to guarantee excess error rate $\eps$.

\section{Definitions}
\label{sec:definitions}

Let $(\X,\BorelX)$ be a measurable space, where $\X$ is called the \emph{instance space}.
Let $\Y = \{-1,+1\}$, and equip the space $\X \times \Y$ with its
product $\sigma$-algebra: $\Borel = \BorelX \otimes 2^{\Y}$.
Let $\exreals = \reals\cup\{-\infty,\infty\}$, 
let $\Functions$ denote the set of all measurable functions $g : \X \to \exreals$, 
and let $\F \subseteq \Functions$, where $\F$ is called the \emph{function class}.
Throughout, we fix a distribution $\PXY$ over $\X \times \Y$,
and we denote by $\Px$ the marginal distribution of $\PXY$ over $\X$.
In the analysis below, we make the usual simplifying assumption that the events and functions
in the definitions and proofs are indeed measurable.
In most cases, this holds under simple conditions on $\F$ and $\PXY$ \citep*[see e.g.,][]{van-der-Vaart:11};
when this is not the case, one may turn to outer probabilities.  However, we will
not discuss these technical issues further.

For any $h \in \Functions$, and any distribution $P$ over $\X \times \Y$,
denote the \emph{error rate} by $\er(h;P) = P( (x,y) : \sign(h(x)) \neq y)$;
when $P = \PXY$, we abbreviate this as $\er(h) = \er(h;\PXY)$.
Also, let $\eta(X;P)$ be a version of $\P(Y=1|X)$, for $(X,Y) \sim P$;
when $P = \PXY$, abbreviate this as $\eta(X) = \eta(X;\PXY)$.
In particular, 
note that
$\er(h;P)$ is minimized at any $h$ with $\sign(h(\cdot)) = \sign(\eta(\cdot;P)-1/2)$. 
For any $\H \subseteq \Functions$,
define the \emph{region of sign-disagreement}
$\DIS(\H) = \{x \in \X : \exists h,g \in \H \text{ s.t. } \sign(h(x)) \neq \sign(g(x))\}$.
Additionally, 
denote by
$\Bracket{\H} = \{ f \in \Functions : \forall x \in \X, \inf_{h \in \H} h(x) \leq f(x) \leq \sup_{h\in\H} h(x)\}$
the minimal bracket set containing $\H$.

We will use standard big-$O$ notation to express asymptotic dependences.
Specifically, for $f,g : (0,\infty) \to [0,\infty)$, we write $f(\eps) = O(g(\eps))$ or $g(\eps) = \Omega(f(\eps))$
if $\limsup_{\eps \to 0} f(\eps)/g(\eps) < \infty$; we write $f(\eps) = \Theta(g(\eps))$ if both $f(\eps) = O(g(\eps))$ and $f(\eps) = \Omega(g(\eps))$,
and we write $f(\eps) = o(g(\eps))$ if $\limsup_{\eps \to 0} f(\eps)/g(\eps) = 0$.

Our interest here is learning from data, so let
$\Data = \{(X_1,Y_1),(X_2,Y_2),\ldots\}$ denote a sequence of independent $\PXY$-distributed
random variables, referred to as the \emph{labeled data} sequence, while
$\{X_1,X_2,\ldots\}$ is referred to as the \emph{unlabeled data} sequence.
For $m \in \nats$, we also denote $\Data_{m} = \{(X_1,Y_1),\ldots,(X_m,Y_m)\}$.
Throughout, we will let $\conf \in (0,1/4)$ denote an arbitrary confidence
parameter, which will be referenced in the methods and theorem statements.

The \emph{active learning} protocol is defined as follows.
An active learning algorithm is initially permitted access to the sequence
$X_1,X_2,\ldots$ of unlabeled data.  It may then select an index $i_{1} \in \nats$
and \emph{request} to observe $Y_{i_{1}}$; after observing $Y_{i_{1}}$, it may select
another index $i_{2} \in \nats$, request to observe $Y_{i_{2}}$, and so on.  After
a number of such label requests not exceeding a given budget $n$,
the algorithm halts and returns a function $\hat{h} \in \Functions$.
Formally, this protocol specifies a type of decision rule mapping the random sequence
$\Data$ to a function $\hat{h}$, where $\hat{h}$ is 
conditionally independent of $\Data$ given $X_1,X_2,\ldots$ and $(i_1, Y_{i_1}), (i_2, Y_{i_2}),\ldots, (i_n,Y_{i_n})$, 
where each $i_{k}$ is conditionally independent of $\Data$ and $i_{k+1},\ldots,i_{n}$ given $X_1,X_2,\ldots$ and $(i_{1}, Y_{i_1}),\ldots,(i_{k-1},Y_{i_{k-1}})$.

\subsection{Surrogate Loss Functions for Classification}
\label{subsec:surrogate-losses}

Throughout, 
we let 
$\loss : \exreals \to [0,\infty]$ 
denote an arbitrary \emph{surrogate loss function}.
For simplicity, 
suppose $|z| < \infty \Rightarrow \loss(z) < \infty$. 
Define
$\maxloss \!=\! 1 \lor \sup_{(x,y) \in \X\times\Y} \sup_{h \in \F} \loss(y h(x))$.
We will generally suppose $\maxloss < \infty$.
In practice, this is more often a constraint on $\F$ and $\X$
than on $\loss$: that is, we could have $\loss$ unbounded,
but due to some normalization of the functions $h \in \F$,
$\loss$ is bounded on the corresponding set of values.
For any $\g \in \Functions$ and distribution $P$ over $\X\times\Y$, 
let $\Risk(\g; P) = \E\left[ \loss(\g(X)Y) \right]$, where $(X,Y) \!\sim\! P$.
This is the \emph{$\loss$-risk} of $\g$ under $P$.
When $P \!=\! \PXY$, abbreviate this as 
$\Risk(\g) \!=\! \Risk(\g; \PXY)$.
%

We will be interested in loss functions $\loss$ whose
point-wise minimizer necessarily also optimizes the $0$-$1$ loss.  This property was
nicely characterized by \citet*{bartlett:06} as follows.
For $\eta_{0} \in [0,1]$, define $\loss^{\star}(\eta_{0}) = \inf_{z \in \exreals} (\eta_{0} \loss(z) + (1-\eta_{0}) \loss(-z))$,
and $\loss^{\star}_{-}(\eta_{0}) = \inf_{z \in \exreals : z (2\eta_{0}-1) \leq 0} (\eta_{0} \loss(z)$ $+ (1-\eta_{0})\loss(-z))$.
Then the surrogate loss 
$\loss$ is said to be \emph{classification-calibrated} if,
$\forall \eta_{0} \in [0,1] \setminus \{1/2\}$,
$\loss^{\star}_{-}(\eta_{0}) > \loss^{\star}(\eta_{0})$.
In our context, for $X \sim \Px$,
$\loss^{\star}(\eta(X))$ represents the minimum value of the conditional $\loss$-risk at $X$,
so that $\E[ \loss^{\star}(\eta(X))] = \inf_{h \in \Functions} \Risk(h)$,
while $\loss^{\star}_{-}(\eta(X))$ represents the minimum conditional $\loss$-risk at $X$,
subject to having a sub-optimal conditional error rate at $X$: i.e.,
$\sign(h(X)) \neq \sign( \eta(X) - 1/2 )$.
Thus, being classification-calibrated implies the minimizer of the conditional $\loss$-risk at $X$ necessarily has the
same sign as the minimizer of the conditional error rate at $X$.
Since we are only interested here in using $\loss$ as a reasonable surrogate for the $0$-$1$ loss,
for the remainder of this article we suppose $\loss$ is classification-calibrated.

Though not strictly necessary for our results below,
it will be convenient for us to suppose that, for all $\eta_{0} \in [0,1]$,
this infimum value $\loss^{\star}(\eta_{0})$ is actually \emph{obtained} as
$\eta_{0} \loss(z^{\star}(\eta_{0})) + (1-\eta_{0}) \loss(-z^{\star}(\eta_{0}))$
for some $z^{\star}(\eta_{0}) \in \exreals$ (not necessarily unique).
For instance, this is the case for any nonincreasing right-continuous $\loss$, 
or continuous and convex $\loss$,
which include most of the cases we are interested in using as surrogate losses anyway.
The proofs can be modified in a natural way to handle the general case, simply substituting any $z$
with conditional risk sufficiently close to the infimum value.
For any distribution $P$, denote $\target_{P}(x) = z^{\star}(\eta(x;P))$ for all $x \in \X$.
In particular, note that $\target_{P}$ obtains
$\Risk(\target_{P};P) = \inf_{\g \in \Functions} \Risk(\g;P)$.
Furthermore, since $\loss$ is classification-calibrated,
we have $\sign(\target_{P}(x)) = \sign(\eta(x;P)-1/2)$ for all $x \in \X$ with $\eta(x;P) \neq 1/2$,
and hence $\er(\target_{P};P) = \inf_{h \in \Functions} \er(h;P)$ as well.
When $P = \PXY$, we abbreviate by 
$\target = \target_{\PXY}$.

All of our main results below rely on the assumption that $\target \in \F$.
When combined with the fact that $\loss$ is classification-calibrated,
this essentially stands as a formal representation of the informal assumption 
that the surrogate loss $\loss$ was chosen wisely: that is, that functions
in $\F$ with relatively low surrogate risk necessarily have relatively low error rate.
%
%
However, it should be noted that this is often a very strong assumption, 
significantly restricting the allowed distributions $\PXY$.
For instance, 
for many losses $\loss$ in practical use (e.g., the quadratic loss),  
when $\F$ is a parametric family,
the assumption that $\target \in \F$
essentially restricts the allowed functions $\eta(\cdot)$ to also form 
a parametric family.
This fact underscores the need for great care in selecting a surrogate loss
when approaching a given learning problem in practice.
In principle, one can relax this assumption slightly, at the expense
of significantly more-complicated theorem statements,
and we include some superficial remarks on this in Appendix~\ref{app:fstar-assumption}.
However, it seems any truly-substantial relaxation 
would require a significantly different approach.

For any distribution $P$ over $\X\times\Y$, and any $h,g \in \Functions$,
define the \emph{loss distance} $\D_{\loss}(h,g;P) = \sqrt{\E\left[ \left(\loss(h(X)Y) - \loss(g(X)Y)\right)^2\right]}$, where $(X,Y) \sim P$.
Also define the \emph{loss diameter} of $\H \subseteq \Functions$ as $\D_{\loss}(\H;P) = \sup_{h,g \in \H} D_{\loss}(h,g;P)$,
and the $\loss$-risk $\eps$-minimal set of $\H$ as
$\H(\eps;\loss,P) = \{h \in \H : \Risk(h;P) - \inf_{g \in \H} \Risk(g;P) \leq \eps\}$.
When $P = \PXY$,
we abbreviate these as
$\D_{\loss}(h,g) = \D_{\loss}(h,g;\PXY)$, $\D_{\loss}(\H) = \D_{\loss}(\H;\PXY)$, and $\H(\eps;\loss) = \H(\eps;\loss,\PXY)$.
%
Also define analogous quantities for the $0$-$1$ loss. 
Define the \emph{distance} $\dist_{P}(h,g) = P((x,y) : \sign(h(x)) \neq \sign(g(x)))$
and \emph{radius} $\radius(\H;P) = \sup_{h \in \H} \dist_{P}(h,\target_{P})$.
Also define the $\eps$-minimal set of $\H$ as $\H(\eps;\zo,P) = \{h \in \H : \er(h;P) - \inf_{g \in \H}\er(g;P) \leq \eps\}$,
and for $r > 0$, define the $r$-ball centered at $h$ in $\H$ by
$\Ball_{\H,P}(h,r) = \{g \in \H : \dist_{P}(h,g) \leq r\}$.
When $P = \PXY$, we abbreviate these as
$\dist(h,g) = \dist_{\PXY}(h,g)$,
$\radius(\H) = \radius(\H;\PXY)$,
$\H(\eps;\zo) = \H(\eps;\zo,\PXY)$, and $\Ball_{\H}(h,r) = \Ball_{\H,\PXY}(h,r)$;
when $\H = \F$, further abbreviate $\Ball(h,r) = \Ball_{\F}(h,r)$.

The following definition will enable us to transform 
guarantees on the excess surrogate risk into guarantees on the excess error rate.
%
\begin{definition}
\label{defn:abstract-transform}
For any distribution $P$ over $\X \times \Y$, and any $\eps \in [0,1]$, define
\begin{equation*}
\Transform(\eps;P) = \sup \left(\{ \gamma > 0 : \Functions(\gamma;\loss,P) \subseteq \Functions(\eps;\zo,P)\} \cup \{0\}\right).
\end{equation*}
Also, for any $\gamma \in [0,\infty)$, define the inverse
\begin{equation*}
\InvTransform(\gamma;P) = \inf\left\{ \eps > 0 : \gamma \leq \Transform(\eps;P) \right\}.
\end{equation*}
When $P = \PXY$, abbreviate
$\Transform(\eps) = \Transform(\eps;\PXY)$ and $\InvTransform(\gamma) = \InvTransform(\gamma;\PXY)$.
\end{definition}
By definition, 
$\Transform$ has the property that
\begin{equation}
\label{eqn:risk-transformation}
\forall h \in \Functions, \forall \eps \in [0,1],~~\Risk(h) - \Risk(\target) < \Transform(\eps) \implies \er(h) - \er(\target) \leq \eps.
\end{equation}
In fact, $\Transform$ is defined to be maximal with this property,
in that \emph{any}
$\Transform^{\prime}$ for which \eqref{eqn:risk-transformation}
is satisfied must have $\Transform^{\prime}(\eps) \leq \Transform(\eps)$ for all $\eps \in [0,1]$.
%
For this reason, we will 
be interested in calculating lower bounds on $\Transform$. 
\citet*{bartlett:06} studied various ways to obtain concrete, calculable lower bounds of this type.
Specifically, for $\zeta \in [-1,1]$,
define $\tilde{\transform}(\zeta) = \loss^{\star}_{-}\left(\frac{1+\zeta}{2}\right) - \loss^{\star}\left(\frac{1+\zeta}{2}\right)$,
and let $\transform$ be the largest convex lower bound of $\tilde{\transform}$ on $[0,1]$,
which is well-defined in this context \citep*{bartlett:06};
for convenience, also define $\transform(x)$ for $x \in (1,\infty)$ arbitrarily, subject to maintaining convexity of $\transform$.
\citet*{bartlett:06} show $\transform$ is continuous and nondecreasing on $(0,1)$,
and in fact that $x \mapsto \transform\left(x\right) / x$ is nondecreasing on $(0,\infty)$.
They also show every $h \in \Functions$ has $\transform(\er(h) - \er(\target)) \leq \Risk(h) - \Risk(\target)$,
so that $\transform \leq \Transform$, 
and they find this inequality can be tight for a particular choice of $\PXY$.
They further study more subtle relationships between excess $\loss$-risk and excess error rate holding for any classification-calibrated $\loss$.
In particular, 
following the argument in the proof of their Theorem 3, one can show that 
$\forall h \in \Functions$,
\begin{equation*}
\dist(h,\target) \cdot \transform\left( \frac{\er(h) - \er(\target)}{2 \dist(h,\target)} \right) \leq \Risk(h) - \Risk(\target).
\end{equation*}
The implication of this in our context is the following.
Fix any nondecreasing function $\BJM : [0,1] \to [0,\infty)$ such that $\forall \eps \geq 0$,
\begin{equation}
\label{eqn:bjm-transform}
\BJM(\eps) \leq \radius(\Functions(\eps;\zo)) \transform\left(\frac{\eps}{2 \radius(\Functions(\eps;\zo))}\right).
\end{equation}
Any $h \in \Functions$ with $\Risk(h) - \Risk(\target) < \BJM(\eps)$ also has
$\dist(h,\target) \transform\left(\frac{\er(h)-\er(\target)}{2\dist(h,\target)}\right) < \BJM(\eps)$;
combined with the fact that $x \mapsto \transform(x)/x$ is nondecreasing on $(0,1)$, this implies
$\radius(\Functions(\er(h)-\er(\target);\zo)) \transform\left(\frac{\er(h)-\er(\target)}{2\radius(\Functions(\er(h)-\er(\target);\zo))}\right) < \BJM(\eps)$;
this means $\BJM(\er(h) - \er(\target)) < \BJM(\eps)$,
and monotonicity of $\BJM$ implies $\er(h) - \er(\target) < \eps$.
Altogether, this implies $\BJM(\eps) \leq \Transform(\eps)$,
so that $\Risk(h) - \Risk(\target) < \BJM(\eps) \implies \er(h) - \er(\target) < \eps$.
In fact, though we do not present the details here,
with only minor modifications to the proofs below,
when $\target \in \F$,
all of our results involving $\Transform(\eps)$
also hold while replacing $\Transform(\eps)$ with
any nondecreasing $\BJM^{\prime}$ s.t. $\forall \eps \geq 0$,
$\BJM^{\prime}(\eps) \leq \radius(\F(\eps;\zo)) \transform\left(\frac{\eps}{2 \radius(\F(\eps;\zo))}\right)$,
which can sometimes lead to tighter results.

Some of our stronger results below will be stated for a restricted family of losses,
originally explored by \citet*{bartlett:06}: namely, smooth losses with convexity
quantified by a polynomial, as described in the following condition.

\begin{condition}
\label{con:strong-convexity}
$\F$ is convex, with $\forall x \in \X, \sup_{f \in \F} |f(x)| \leq \Fbound$ for some constant $\Fbound \in (0,\infty)$,
and there exists a pseudometric
$\metric : [-\Fbound,\Fbound]^{2} \to [0,\metricbound]$ for some constant $\metricbound \in (0,\infty)$,
and constants $\Lip, \convc \in (0,\infty)$ and $\convr \in (0,\infty]$ such that
$\forall x,y \in [-\Fbound,\Fbound], |\loss(x)-\loss(y)| \leq L \metric(x,y)$,
and the function
\begin{equation*}
  \modconv(\eps) \!=\! \inf\!\left(\left\{ \frac{1}{2}\loss(x)\!+\!\frac{1}{2}\loss(y) - \loss\!\left(\frac{1}{2}x\!+\!\frac{1}{2}y\right) : x,y \!\in\! [-\Fbound,\!\Fbound], \metric(x,y) \geq \eps\right\} \!\cup\!\{\infty\}\right)
\end{equation*}
satisfies
$\forall \eps \in [0,\infty)$, $\modconv(\eps) \geq \convc \eps^{\convr}$.
\end{condition}

In particular, note that if $\F$ is convex, and the functions in $\F$ are uniformly bounded, and $\loss$ is convex and continuous,
then Condition~\ref{con:strong-convexity} is always satisfied (though possibly with $\convr = \infty$) by taking $\metric(x,y) = |x-y|/(4\Fbound)$.

\subsection{A Few Examples of Loss Functions}
\label{subsec:loss-examples}

Here we briefly mention a few loss functions $\loss$ in common practical use,
all of which are classification-calibrated.
These examples are taken directly from the work of \citet*{bartlett:06},
which additionally discusses many other interesting examples of classification-calibrated
loss functions and their corresponding $\transform$ functions.

\paragraph{Example 1}
The \emph{quadratic loss} (or squared loss), specified as $\loss(x) = (1-x)^{2}$,
is often used in so-called \emph{plug-in} classifiers \citep*{audibert:07},
which approach the problem of learning a classifier by estimating
the regression function $\E[Y|X=x] = 2\eta(x)-1$, and then taking the sign of this estimator
to get a binary classifier.  The quadratic loss has the convenient
property that for any distribution $P$ over $\X\times\Y$,
$\target_{P}(\cdot) = 2\eta(\cdot;P)-1$, so that it is straightforward to describe
the set of distributions $P$ satisfying the assumption $\target_{P} \in \F$.
In classification, this loss is sometimes modified as $\loss(x) = \max\{1-x,0\}^{2}$,
called the \emph{truncated} quadratic loss.
\citet*{bartlett:06} show that for the quadratic loss (with or without truncation),
$\transform(x) = x^{2}$, and Condition~\ref{con:strong-convexity} is satisfied
with $\Lip = 2(\Fbound+1)$, $\convc = 1/4$, 
\ignore{and}
$\convr = 2$.
\ignore{In fact, they also study the general family of losses
$\loss(x) = |1-x|^{p}$, for $p \in (1,\infty)$, and show 
$\transform(x)$ and $\convr$ exhibit a range of behaviors varying with $p$.}

\paragraph{Example 2}
The \emph{exponential loss} is specified as $\loss(x) = e^{-x}$.
This loss function appears in many contexts in machine learning;
for instance, the popular AdaBoost method can be viewed as
an algorithm that greedily optimizes the exponential loss \citep*{freund:97b}.
\citet*{bartlett:06} show that under the exponential loss, $\target(x) = \frac{1}{2} \ln\!\left(\frac{\eta(x)}{1-\eta(x)}\right)$ 
and $\transform(x) = 1 - \sqrt{1-x^2}$, which is tightly approximated
by $x^2/2$ for small $x$.  They also show this loss satisfies the
conditions on $\loss$ in Condition~\ref{con:strong-convexity}
with $\metric(x,y)=|x-y|$, $\Lip = e^{\Fbound}$, $\convc = e^{-\Fbound} / 8$,
and $\convr = 2$.  Note, however, that for noise-free
distributions, we would need $\target(x) = \pm \infty$,
which means most common function classes $\F$ could not be
expected to contain $\target$ for this loss in the noise-free case.

\paragraph{Example 3}
The \emph{hinge loss}, specified as $\loss(x) = \max\left\{1-x,0\right\}$,
is another common surrogate loss in machine learning practice today.
For instance, it is used in the objective of the Support Vector Machine
(along with a regularization term) \citep*{cortes:95}.
\citet*{bartlett:06} show that for the hinge loss, $\target(x) = \sign(\eta(x)-1/2)$ and 
$\transform(x) = |x|$.  The hinge loss is Lipschitz continuous, with Lipschitz constant $1$.
However, for the remaining conditions on $\loss$ in Condition~\ref{con:strong-convexity},
any $x,y \leq 1$ have $\frac{1}{2}\loss(x) + \frac{1}{2}\loss(y) = \loss(\frac{1}{2}x + \frac{1}{2}y)$,
so that $\modconv(\eps) = 0$; hence, $\convr=\infty$ is required.

\section{Methods Based on Optimizing the Surrogate Risk}
\label{sec:optimizing-surrogate}

Perhaps the simplest way to use a surrogate loss
is to optimize $\Risk(h)$ over $h \in \F$
until identifying $h \in \F$ with
$\Risk(h) - \Risk(\target) < \Transform(\eps)$,
at which point we are guaranteed $\er(h) - \er(\target) \leq \eps$.
In this section,
we introduce a classic passive learning method based on this strategy,
and discuss the potential drawbacks of this
approach for active learning.

\subsection{Passive Learning: Empirical Risk Minimization}
\label{subsec:erm}

In the context of passive learning,
the method of \emph{empirical $\loss$-risk minimization}
is one of the most-studied methods for optimizing $\Risk(h)$
over $h \in \F$.  To define this method, we first introduce some notation.
For any $m \in \nats$, $\g \!:\! \X \to \exreals$,
and $S \!=\! \{(x_1,y_1),\ldots,(x_m,y_m)\}$ $\in (\X\times\Y)^{m}$,
we overload the $\Risk(\g;\cdot)$ notation, 
defining the \emph{empirical} $\loss$-\emph{risk} as
$\Risk(\g;S) = m^{-1} \sum_{i=1}^{m} \loss(\g(x_i)y_i)$:
that is, $\Risk(\g;S)$ is the $\loss$-risk of $\g$ under the uniform distribution on $S$.
At times it will be convenient to keep track of the indices for a subsequence of $\Data$,
and for this reason we further overload the notation, so that for any
$Q = \{(i_1,y_1),\ldots,(i_m,y_m)\} \in (\nats \times \Y)^{m}$,
we define $S[Q] = \{(X_{i_1},y_1),\ldots,(X_{i_m},y_m)\}$
and $\Risk(\g;Q) = \Risk(\g;S[Q])$.
For completeness, we also generally define $\Risk(\g;\emptyset) = 0$.

The method of empirical $\loss$-risk minimization, here denoted by
$\ERM_{\loss}(\H,\Data_{m})$,
is characterized by the property that it returns $\hat{h} = \argmin_{h \in \H} \Risk(h;\Data_{m})$.
This is a well-studied and classical passive learning method, presently in popular
use in applications, and as such it will serve as our baseline passive learning method for comparison.
We review several known performance guarantees for $\ERM_{\loss}$ below.

\subsection{Negative Results for Active Learning}
\label{subsec:negative-results}

As mentioned, there are several active learning methods designed to
optimize a general loss function \citep*{beygelzimer:09,koltchinskii:10}.
However, it turns out that for many interesting loss functions,
the number of labels required for active learning to achieve a given
excess surrogate risk value is not significantly smaller than that
sufficient for passive learning by $\ERM_{\loss}$.

Specifically, 
consider a problem with $\X = \{x_0,x_1\}$,
a fixed $\Fbound \in (0,\infty)$, 
and $\F$ as the set of all functions $f$ with 
$(f(x_{0}),f(x_{1})) \in [-\Fbound,\Fbound] \times (0,\Fbound]$.
Let $z \in (0,1/2)$ be a constant, 
let $\eta(x_1) = 1/2+z$, and suppose that $\loss$ is a 
classification-calibrated loss with $\maxloss < \infty$ such that for any 
$\eta(x_0) \in [4/6,5/6]$, we have $\target \in \F$
(the latter condition could equivalently be stated as a constraint on $\Fbound$).
%
Given a small value $\eps \in (0,z)$, let 
$\Px(\{x_1\}) = \eps / (2z)$,
$\Px(\{x_0\}) = 1-\Px(\{x_1\})$.
For this problem, any function $h$ with 
$\sign(h(x_1)) = -1$
has $\er(h) - \er(\target) \geq \eps$, so that
$\Transform(\eps) \leq (\eps / (2z)) ( \loss^{\star}_{-}(\eta(x_1)) - \loss^{\star}(\eta(x_1)))$;
since $\loss$ is classification-calibrated and $\maxloss < \infty$, 
this implies $\Transform(\eps) \leq c \eps$, for some $\loss$-dependent $c \in (0,\infty)$.
Any function $h$ with $\Risk(h)-\Risk(\target) \leq c \eps$ 
for this problem must have 
$\E[\loss(h(X)Y)|X=x_{0}] - \E[\loss(\target(X)Y)|X=x_{0}] \leq 
c\eps / \Px(\{x_0\}) = O(\eps)$.
Existing results of \citet*{hanneke:10b} 
(with a slight modification to rescale for $\eta(x_0) \in [4/6,5/6]$)
imply that,
for many classification-calibrated losses $\loss$,
the minimax optimal number of labels sufficient for 
an active learning algorithm to achieve this
latter guarantee is 
$\Theta(1/\eps)$.
\citet*{hanneke:10b} specifically show this for losses 
$\loss$ that are strictly positive, 
decreasing, strictly convex, and
twice differentiable with continuous second derivative;
however, that result can easily be extended
to a wide variety of other classification-calibrated
losses, such as the quadratic loss, which satisfy
these conditions in a neighborhood of $0$.
It is also known \citep*{bartlett:06} (see also below)
that for many such losses (specifically, those satisfying
Condition~\ref{con:strong-convexity} with $\convr = 2$),
$\Theta(1/\eps)$ random labeled samples are sufficient
for $\ERM_{\loss}$ to achieve this same guarantee, so
that error bounds based purely on the surrogate risk of the function
produced by an active learning method in this scenario can be at most a constant factor smaller
than those provable for passive learning methods.

Below, we provide an active learning algorithm 
and analysis of its performance which, in the scenario above
(with $\convr=2$), guarantees 
expected
excess error rate less than
$\eps$, 
using a number of label requests $O(\log(1/\eps)\log\log(1/\eps))$.
The implication is that, to identify the improvements achievable by
active learning with a surrogate loss, it is not sufficient to merely analyze the surrogate
risk of the function produced by a given active learning algorithm.
Indeed, since we are not particularly interested in the surrogate risk
itself, we may even consider active learning algorithms that do not
actually optimize $\Risk(h)$ over $h \in \F$ (even in the limit).

\section{Alternative Use of the Surrogate Loss}
\label{sec:acal}

Given that we are interested in $\loss$ only insofar as it helps us to optimize the error rate with computational efficiency,
we might ask whether there is a method that makes more effective use of $\loss$ for optimizing the error rate, 
while maintaining the computational advantages.
To explore this question, we propose the following method, which generalizes the methods of \citet*{koltchinskii:10} and \citet*{hanneke:12a}.
Results similar to those proven below should also hold for analogous generalizations of the related methods of
\citep*{balcan:09,dasgupta:07,beygelzimer:09}.

\begin{bigboxit}
\ACAL :\\
Input:
surrogate loss $\loss$,
unlabeled sample budget $u$, labeled sample budget $n$\\
Output: classifier $\hat{h}$\\
{\vskip -2mm}\line(1,0){332}\\
0. $V \gets \F$, $Q \gets \{\}$, $m \gets 1$, $t \gets 0$ \\ 
1. While $m < u$ and $t < n$\\
2. \quad $m \gets m+1$\\
3. \quad If $X_m \in \DIS(V)$\\
4. \qquad Request label $Y_m$ and let $Q \gets Q \cup \{(m,Y_m)\}$, $t \gets t+1$\\
5. \quad If $\log_{2}(m) \in \nats$ \\ 
6. \qquad $V \gets \left\{ h \in V : \Risk(h ; Q) - \inf_{g \in V} \Risk(g;Q) \leq \hat{T}_{\loss}(V;Q,m) \right\}$\\
7. \qquad $Q \gets \{\}$ \\ 
8. Return $\hat{h} = \argmin_{h \in V} \Risk(h; Q)$
\end{bigboxit}

The intuition behind this algorithm is that, since we are only interested in achieving low error rate,
once we have identified $\sign(\target(x))$ for a given $x \in \X$, there is no need to further 
optimize the value $\E[\loss(\hat{h}(X)Y) | X = x]$.
Thus, as long as we maintain $\target \in V$,  the data points $X_m \notin \DIS(V)$ are typically less informative than those $X_m \in \DIS(V)$.
We therefore focus the label requests on those $X_m \in \DIS(V)$, since there remains some uncertainty about $\sign(\target(X_m))$ for these points.
The algorithm updates $V$ periodically (Step 6), removing those functions $h$ whose
excess empirical risks (under the current sampling distribution) are relatively large; by setting 
this threshold $\hat{T}_{\loss}$ appropriately, we can guarantee the excess empirical risk of 
$\target$ is smaller than $\hat{T}_{\loss}$.  Thus, the algorithm maintains $\target \in V$ as an invariant,
while shrinking the sampling region $\DIS(V)$.  The actual definition of $\hat{T}_{\loss}$ sufficient
for the results stated below will be specified in Section~\ref{subsec:hatT-spec} below, based on data-dependent
concentration inequalities.

In practice, the set $V$ can be maintained implicitly,
simply by keeping track of the constraints (Step 6) that
define it.  Then the condition in Step 3 can be checked
by solving two constraint satisfaction problems (one for each
sign).  Likewise, the value $\inf_{g \in V} \Risk(g;Q)$ in these constraints, 
as well as the final $\hat{h}$, 
can be found by solving constrained optimization problems.
Thus, for convex loss functions and convex finite-dimensional classes of function, these steps typically
have computationally efficient realizations as convex optimization problems, 
as long as the $\hat{T}_{\loss}$ values can also be obtained efficiently. 

We include general results on the performance of \ACAL~in Section~\ref{sec:abstract} below. 
For now, we briefly sketch the main ideas of the analysis, in rough outline.
For any measurable $\region \subseteq \X$,
and any $h,g \in \Functions$,
define the spliced function $h_{\region,g}(x) = h(x) \ind_{\region}(x) + g(x) \ind_{\X\setminus\region}(x)$.
For a set $\H \subseteq \Functions$, denote $\H_{\region,g} = \{ h_{\region,g} : h \in \H\}$.
In the special case $g=\target$, we abbreviate these as 
$h_{\region} = h_{\region,\target}$ and $\H_{\region} = \{h_{\region} : h \in \H\}$.
As mentioned, the idea in the analysis is to argue that \ACAL~maintains $\target \in V$,
while also removing from $V$ any function with relatively large error rate,
within a certain number of rounds.
%
More explicitly, upon reaching $m$ satisfying the condition in Step 5,
if we denote $\L_{m} = \{(1+m/2,Y_{1+m/2}),\ldots,(m,Y_{m})\}$,
then since every $(m^{\prime},Y_{m^{\prime}}) \in \L_{m}$ is either in $Q$ 
or else $X_{m^{\prime}} \notin \DIS(V)$, 
every 
$h \in V$
has
$(\Risk(h;Q) - \inf_{\!g \in V} \!\Risk(g;Q))|Q| \!=\! (\Risk(h_{\DIS(V)};\L_{m}) - \inf_{\!g \in V} \!\Risk(g_{\DIS(V)};\L_{m})) \frac{m}{2}$.
We therefore define $\hat{T}_{\loss}(V;Q,m)$ to provide a concentration inequality
$\Risk(\target;\L_{m}) - \inf_{g \in V} \Risk(g_{\DIS(V)};\L_{m}) \leq \frac{2|Q|}{m} \hat{T}_{\loss}(V;Q,m)$,
thus maintaining that $\target \in V$ in Step 6.
This also implies that, if $V_{\DIS(V)} \subseteq \Bracket{\F}(2^{2-j};\loss)$ upon reaching Step 5 (for some $j \in \ints$), 
then $V \subseteq \F(\InvTransform(2^{2-j});\zo)$.
One can then show that, upon reaching $m$ of a certain size $u_{j}$ (quantified below), the value $\frac{2|Q|}{m}\hat{T}_{\loss}(V;Q,m)$
will be small enough that, in combination with concentration of $\Risk(h_{\DIS(V)};\L_{m})$ values,
after the update in Step 6,
only functions $h \in V$ with $\Risk(h_{\DIS(V)}) - \Risk(\target) < 2^{-j}$ will remain:
that is, after the update, $V_{\DIS(V)} \subseteq \Bracket{\F}(2^{-j};\loss)$.
By induction, upon reaching $m$ of a sufficiently large size $u_{j_{\eps}}$ (quantified below),
every $h \in V$ has $\Risk(h_{\DIS(V)})-\Risk(\target) < \Transform(\eps)$,
which implies $\er(h) - \er(\target) \leq \eps$.
This provides a sufficient size of $u$ to obtain excess error rate $\eps$.
%
Next, we note that the algorithm requests a label $Y_{m}$ only if $X_{m} \in \DIS(V)$.
The above reveals that, if $u_{j-1} < m \leq u_{j}$, 
then $V \subseteq \F(\InvTransform(2^{2-j});\zo)$,
which implies $\DIS(V) \subseteq \DIS(\F(\InvTransform(2^{2-j});\zo))$.  Thus, the number
of labels the algorithm requests among indices $m$ with 
$u_{j-1} < m \leq u_{j}$ is at most the number with $X_{m} \in \DIS(\F(\InvTransform(2^{2-j});\zo))$,
a number which can easily by upper bounded by a simple Chernoff bound.
This provides a sufficient size of $n$ for the algorithm to obtain excess error rate $\eps$.

The number of label requests sufficient for \ACAL~to obtain excess error rate $\eps$
can often (though not always) be significantly smaller
than the number of random labeled data points
sufficient for $\ERM_{\loss}$ to achieve the same. 
This is typically the case when $\Px(\DIS(\F(\eps;\zo))) \to 0$ as $\eps \to 0$.
When this is the case, the number of labels requested by the algorithm is sublinear
in the number of unlabeled samples it processes.
Not surprisingly, the magnitude of the improvements of \ACAL~over $\ERM_{\loss}$
can be quantified in terms of the \emph{rate} at which $\Px(\DIS(\F(\eps;\zo)))$ vanishes as $\eps \to 0$.
In the next section, we quantify this rate in terms of 
a complexity measure known as the disagreement coefficient.

\section{Main Results}
\label{sec:explicit}

We provide a general analysis of \ACAL~in Section~\ref{subsec:abstract} below.
For now, we summarize a few of the most interesting implications of that analysis,
under commonly-studied complexity conditions: namely, 
VC subgraph classes and entropy conditions.
Detailed derivations for all of these results (from the abstract theorems) 
are included in Section~\ref{sec:derivations} below.
Appendix~\ref{subsec:vc-major} further includes a brief discussion of VC major classes and VC hull classes. 
In the interest of making the results more concise and explicit,
we express them in terms of well-known conditions relating distances to excess risks.
We also express them in terms of a lower bound on $\Transform(\eps)$ of the type in \eqref{eqn:bjm-transform},
with convenient properties that allow for closed-form expression of the results.
Throughout, we use the convenient notation $\Log(x) = \max\{\ln(x),1\}$, defined for all $x \in (0,\infty)$.

\subsection{Diameter Conditions}
\label{subsec:noise}

To begin, we first state some general characterizations relating distances to excess risks;
these characterizations will make it easier to express our results more concretely below,
and make for a more straightforward comparison between results for the above methods.
The following condition, introduced by \citet*{mammen:99} and \citet*{tsybakov:04},
is a well-known noise condition, about which there is now an extensive literature
\citep*[e.g.,][]{koltchinskii:06,bartlett:06,hanneke:11a,hanneke:12a}.

\begin{condition}
\label{con:tsybakov-01}
For some $\tsybca \in [1,\infty)$ and $\tsyba \in [0,1]$,
for every $\g \in \Functions$,
\begin{equation*}
\dist\left(\g, \target\right) \leq \tsybca \left(\er(\g) - \er(\target)\right)^{\tsyba}.
\end{equation*}
\end{condition}

Condition~\ref{con:tsybakov-01} is equivalently expressed in terms of certain noise conditions
\citep*{mammen:99,tsybakov:04,bartlett:06}.
Specifically, satisfying Condition~\ref{con:tsybakov-01} with some $\tsyba < 1$ is equivalent
to the existence of some $\tsybca^{\prime} \in [1,\infty)$ such that, for all $\eps > 0$,
$\Px\left( x : |\eta(x) - 1/2| \leq \eps\right) \leq \tsybca^{\prime} \eps^{\tsyba / (1-\tsyba)}$,
which is often referred to as a \emph{low noise} condition.
Additionally, satisfying Condition~\ref{con:tsybakov-01} with $\tsyba=1$ is equivalent to having
some $\tsybca^{\prime} \in [1,\infty)$ such that
$\Px\left( x : |\eta(x) - 1/2| \leq 1 / \tsybca^{\prime} \right) = 0$,
often referred to as a \emph{bounded noise} condition.

For simplicity, we formulate our results in terms of $\tsybca$ and $\tsyba$ from Condition~\ref{con:tsybakov-01}.
However, for the abstract results in this section, the results remain valid under the weaker condition
that replaces $\Functions$ by $\F$, and adds the condition that $\target \in \F$.
In fact, the specific results in this section also remain valid using this weaker condition while additionally 
replacing \eqref{eqn:bjm-transform} with
the $\F$-specific $\BJM^{\prime}$ requirement mentioned in Section~\ref{subsec:surrogate-losses},
as remarked above.

An analogous condition can be defined for the surrogate loss function, as follows.
Essentially-similar notions have been explored by \citet*{bartlett:06} and \citet*{koltchinskii:06}.

\begin{condition}
\label{con:tsybakov-sur}
For some $\tsybcb \in [1,\infty)$ and $\tsybb \in [0,1]$, for every $\g \in \Bracket{\F}$, 
\begin{equation*}
\D_{\loss}\left( \g, \target_{P} ; P\right)^{2}
\leq \tsybcb \left( \Risk(\g ; P) - \Risk(\target_{P} ; P) \right)^{\tsybb}.
\end{equation*}
\end{condition}

Note that 
these conditions are \emph{always} satisfied for \emph{some} values of $\tsybca,\tsybcb,\tsyba,\tsybb$,
since $\tsyba=\tsybb=0$ trivially satisfies the conditions.  However, in more benign
scenarios, values of $\tsyba$ and $\tsybb$ strictly greater than $0$ can be satisfied.
Furthermore, for some loss functions $\loss$,
Condition~\ref{con:tsybakov-sur} can even be satisfied \emph{universally},
in the sense that it holds for a particular value of $\tsybb > 0$ for \emph{all} distributions.
In particular, \citet*{bartlett:06} show that this is the case under Condition~\ref{con:strong-convexity},
as stated in the following lemma (see \citep*{bartlett:06} for the proof).

\begin{lemma}
\label{lem:bjm-strong-convexity}
Suppose Condition~\ref{con:strong-convexity} is satisfied.
Let $\tsybcb = (2 \convc \metricbound^{\min\{\convr-2,0\}})^{-\tsybb} \Lip^2$ and $\tsybb = \min\{1,\frac{2}{\convr}\}$.
Then \emph{every} distribution $P$ over $\X\times\Y$ with $\target_{P} \in \Bracket{\F}$ 
satisfies Condition~\ref{con:tsybakov-sur} with these values of $\tsybcb$ and $\tsybb$.
\end{lemma}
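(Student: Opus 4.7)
I follow what I take to be the standard Bartlett--Jordan--McAuliffe argument: convert from $\loss$ to the pseudometric $\metric$ via Lipschitzness, extract an upper bound on an expected power of $\metric$ from the strong-convexity modulus via a midpoint argument, then interpolate between the $\convr$-th moment and the $2$-nd moment depending on the size of $\convr$.

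First, I note that every $g \in \Bracket{\F}$ is bounded pointwise in $[-\Fbound,\Fbound]$ (since each $h \in \F$ is, by Condition~\ref{con:strong-convexity}), and the same holds for $\target_P \in \Bracket{\F}$; since $Y \in \{-1,+1\}$, both $g(X)Y$ and $\target_P(X)Y$ lie in $[-\Fbound,\Fbound]$, so the Lipschitz part of Condition~\ref{con:strong-convexity} yields
$$\D_{\loss}(g,\target_P;P)^2 = \E\bigl[(\loss(g(X)Y) - \loss(\target_P(X)Y))^2\bigr] \leq \Lip^2\,\E\bigl[\metric(g(X)Y,\target_P(X)Y)^2\bigr].$$

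Next I extract the excess $\loss$-risk from the $\convr$-th moment of $\metric$. Fix $X$ and set $a = g(X)Y$, $b = \target_P(X)Y$; both lie in $[-\Fbound,\Fbound]$, so the definition of $\modconv$ combined with the polynomial lower bound in Condition~\ref{con:strong-convexity} gives
$$\tfrac{1}{2}\loss(a) + \tfrac{1}{2}\loss(b) - \loss\!\left(\tfrac{a+b}{2}\right) \geq \modconv(\metric(a,b)) \geq \convc\,\metric(a,b)^{\convr}.$$
Since $\tfrac{a+b}{2} = \tfrac{g(X)+\target_P(X)}{2}\,Y$, taking expectations and using that $\target_P$ is the pointwise minimizer of the conditional $\loss$-risk over all of $\Functions$ (so $\Risk(\tfrac{g+\target_P}{2};P) \geq \Risk(\target_P;P)$) yields
$$\convc\,\E\bigl[\metric(g(X)Y,\target_P(X)Y)^{\convr}\bigr] \leq \tfrac{1}{2}\bigl(\Risk(g;P) - \Risk(\target_P;P)\bigr).$$

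Finally I convert the $\convr$-th moment into the second moment in two cases. If $\convr \leq 2$, then $\metric \leq \metricbound$ gives $\metric^2 \leq \metricbound^{2-\convr}\metric^{\convr}$, hence $\E[\metric^2] \leq \metricbound^{2-\convr}\E[\metric^{\convr}]$; combined with the Lipschitz bound this yields Condition~\ref{con:tsybakov-sur} with $\tsybb = 1$ and $\tsybcb = \Lip^2\metricbound^{2-\convr}/(2\convc)$. If $\convr > 2$, then $t \mapsto t^{2/\convr}$ is concave on $[0,\infty)$, so Jensen gives $\E[\metric^2] \leq \E[\metric^{\convr}]^{2/\convr}$, yielding $\tsybb = 2/\convr$ and $\tsybcb = \Lip^2(2\convc)^{-2/\convr}$. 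Both match the stated formula $\tsybcb = (2\convc\,\metricbound^{\min\{\convr-2,0\}})^{-\tsybb}\Lip^2$. The one delicate step is the midpoint inequality, which crucially uses that $\target_P$ is a \emph{pointwise} minimizer (not merely an $\F$-minimizer), so no convexity of $\F$ is needed; the rest is algebra.
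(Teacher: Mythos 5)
Your proof is correct. The paper does not actually include a proof of this lemma---it defers to \citet*{bartlett:06}---and your argument is precisely the standard one from that reference: the Lipschitz reduction of $\D_{\loss}$ to the pseudometric $\metric$, the midpoint/modulus-of-convexity step using that $\target_{P}$ is the \emph{pointwise} (hence global, over all of $\Functions$) minimizer of the surrogate risk so that no convexity of $\F$ is invoked, and the moment interpolation split according to whether $\convr \leq 2$ or $\convr > 2$, which reproduces the stated $\tsybb$ and $\tsybcb$ exactly.
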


Under Condition~\ref{con:tsybakov-01}, it is particularly straightforward to obtain bounds on $\Transform(\eps)$
based on a function $\BJM(\eps)$ satisfying \eqref{eqn:bjm-transform}.  For instance, since
$x \mapsto x \transform(1/x)$ is nonincreasing on $(0,\infty)$ \citep*{bartlett:06}, the function
\begin{equation}
\label{eqn:bjm-tsybakov-transform}
\BJM(\eps) = \tsybca \eps^{\tsyba} \transform\left( \eps^{1-\tsyba}/(2\tsybca)\right)
\end{equation}
satisfies $\BJM(\eps) \leq \Transform(\eps)$ \citep*{bartlett:06}.
Furthermore, for classification-calibrated $\loss$,
$\BJM$ in \eqref{eqn:bjm-tsybakov-transform}
is strictly increasing, nonnegative, and continuous on $(0,1)$ \citep*{bartlett:06}, 
and has $\BJM(0) = 0$;
thus, the inverse,
defined for $\gamma > 0$ by
$\BJM^{-1}(\gamma) = \inf(\{\eps > 0 : \gamma \leq \BJM(\eps)\}\cup\{1\})$,
is strictly increasing, nonnegative, and continuous on $(0,\BJM(1))$. 
Furthermore, one can easily show $x \mapsto \BJM^{-1}(x) / x$ is nonincreasing on $(0,\infty)$.
Also note that $\forall \gamma > 0, \InvTransform(\gamma) \leq \BJM^{-1}(\gamma)$.

For any distribution $P$ over $\X\times\Y$ and any $\H \subseteq \Bracket{\F}$ with $\target_{P} \in \H$,
let
\begin{align}
\G_{\H} &= \{ (x,y) \mapsto \loss(h(x)y) : h \in \H\}, \notag
\\ \text{and }
\G_{\H,P} & = \{ (x,y) \mapsto \loss(h(x)y) - \loss(\target_{P}(x)y) : h \in \H\}. \label{eqn:G-sub-H}
\end{align}
Below, we let $\FunctionsXY$ denote the set of measurable functions $g : \X\times\Y \to \exreals$.
Also, for $\G \subseteq \FunctionsXY$, let $\Env(\G) = \sup_{g \in \G}|g|$ denote the minimal \emph{envelope} function for $\G$, 
and for $g \in \FunctionsXY$ let $\|g\|_{P}^{2} = \int g^2 {\rm d}P$ denote the squared $L_{2}(P)$ seminorm of $g$;
we will generally assume $\Env(\G)$ is measurable in the discussion below.

\subsection{The Disagreement Coefficient}
\label{subsec:disagreement-coefficient}

In order to more concisely state our results, it will be convenient to
bound $\Px(\DIS(\H))$ by a linear function of $\radius(\H)$, for $\radius(\H)$ in a given range.
This type of relaxation has been used extensively in the active learning
literature \citep*{hanneke:07b,hanneke:thesis,hanneke:11a,hanneke:12a,wang:11,koltchinskii:10,friedman:09,beygelzimer:09,hanneke:10a,dasgupta:07,raginsky:11},
and the coefficient in the linear function is typically referred to as the \emph{disagreement coefficient}.
Specifically, the following definition is due to \citet*{hanneke:07b,hanneke:11a}; related quantities
have been explored by \citet*{alexander:87} and \citet*{gine:06}.

\begin{definition}
\label{defn:disagreement-coefficient}
For any $r_0 > 0$,
define the \emph{disagreement coefficient} of a function $h : \X \to \reals$ with respect to $\F$ under $\Px$ as
\begin{equation*}
\dc_{h}(r_0) = \sup_{r > r_0} \frac{\Px(\DIS(\Ball(h,r)))}{r} \lor 1.
\end{equation*}
If $\target \in \F$, define the disagreement coefficient of the class $\F$ as
$\dc(r_0) = \dc_{\target}(r_0)$.
\end{definition}

The value of $\dc(\eps)$ has been studied and bounded for various function classes $\F$ under various conditions on $\Px$.
In many cases of interest, $\dc(\eps)$ is known to be bounded by a finite constant \citep*{hanneke:07b,hanneke:11a,hanneke:10a,friedman:09,mahalanabis:11},
while in other cases, $\dc(\eps)$ may have an interesting dependence on $\eps$ \citep*{wang:11,hanneke:10a,raginsky:11}.
The reader is referred to the works of \citet*{hanneke:11a,hanneke:12a} for detailed discussions on the disagreement coefficient.

\subsection{VC Subgraph Classes}
\label{subsec:vc}

We begin with results for VC subgraph classes.
For a collection $\mathcal{A}$ of sets,
a set of points $\{z_1,\ldots,z_k\}$ is said to be \emph{shattered}
by $\mathcal{A}$ if $|\{ A \cap \{z_1,\ldots,z_k\} : A \in \mathcal{A}\}| = 2^{k}$.
The VC dimension $\vc(\mathcal{A})$ of $\mathcal{A}$ is then defined as the
largest integer $k$ for which there exist $k$ points $\{z_1,\ldots,z_k\}$
shattered by $\mathcal{A}$ \citep*{vapnik:71}; if no such largest $k$ exists,
we define $\vc(\mathcal{A}) = \infty$.
For a set $\G$ of real-valued functions,
denote by $\vc(\G)$ the VC dimension of the collection
$\{ \{ (x,y) : y < g(x)\} : g \in \G\}$
of subgraphs of functions in $\G$ (called the pseudo-dimension \citep*{pollard:90,haussler:92});
to simplify the 
results below,
we adopt the convention that when the VC dimension of this collection is $0$,
we let $\vc(\G) = 1$. 
$\G$ is said to be a \emph{VC subgraph} class if $\vc(\G) < \infty$ \citep*{van-der-Vaart:96}.

Because we are interested in results concerning values of $\Risk(h)-\Risk(\target)$,
for functions $h$ in certain subsets $\H \subseteq \Bracket{\F}$,
we will formulate results below in terms of $\vc(\G_{\H})$. 
In some special cases, such as monotonic $\loss$, 
these results can 
be rephrased directly in terms of $\vc(\H)$ if desired
\citep*[e.g.,][]{dudley:87,haussler:92}.

Following \citet*{gine:06}, for $r > 0$, define $\Ball_{\H,P}(\target_{P}, r ; \loss) = \{g \in \H : \D_{\loss}(g,\target_{P} ; P)^{2} \leq r\}$, 
and for $r_{0} \geq 0$, define
\begin{equation*}
\capacity_{\loss}(r_{0} ; \H, P) = \sup_{r > r_{0}} \frac{\left\|\Env\left(\G_{\Ball_{\H,P}(\target_{P}, r ; \loss),P} \right)\right\|_{P}^{2}}{r} \lor 1.
\end{equation*}
When $P = \PXY$, abbreviate this as $\capacity_{\loss}(r_{0} ; \H) = \capacity_{\loss}(r_{0} ; \H, \PXY)$,
and when $\H = \F$, further abbreviate $\capacity_{\loss}(r_{0}) = \capacity_{\loss}(r_{0} ; \F, \PXY)$.

We can now state 
the following theorem, providing a sample size sufficient for $\ERM_{\loss}$ to obtain excess error rate $\eps$.
This result is implicit in the work of \citet*{gine:06}.
\begin{theorem}
\label{thm:vc-erm}
For a universal constant $c \in [1,\infty)$,
if $\PXY$ satisfies Condition~\ref{con:tsybakov-01} and Condition~\ref{con:tsybakov-sur},
$\loss$ is classification-calibrated,
$\target \in \F$, and $\BJM$ is as in \eqref{eqn:bjm-tsybakov-transform},
then for any $\eps \in (0,1)$, letting $\capacity_{\loss} = \capacity_{\loss}\left( \tsybcb \BJM(\eps)^{\tsybb} \right)$,
for any $m \in \nats$ with
\begin{equation}
\label{eqn:vc-erm-m}
m \geq c \left(\frac{\tsybcb}{\BJM(\eps)^{2-\tsybb}} + \frac{\maxloss}{\BJM(\eps)}\right) \left( \vc(\G_{\F}) \Log\left(\capacity_{\loss}\right) + \Log\left(1/\conf\right)\right),
\end{equation}
with probability at least $1-\conf$,
$\ERM_{\loss}(\F,\Data_{m})$ produces $\hat{h}$ with $\er(\hat{h}) - \er(\target) \leq \eps$.
\end{theorem}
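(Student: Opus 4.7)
The plan is to combine Theorem~\ref{thm:erm} with the concrete sample-complexity bounds developed just above the statement: namely, \eqref{eqn:sc-inequalities}, \eqref{eqn:spec-split-bound}, \eqref{eqn:restinv-bound}, and \eqref{eqn:vc-subgraph-phiinv}. Since $\target \in \F$, \eqref{eqn:sc-inequalities} gives $\bar{\SC}_{\loss}(\Transform(\eps);\F,\PXY,\sfun) \leq \spec{\SC}_{\loss}(\Transform(\eps);\F,\PXY,\sfun)$, so it suffices to bound the latter by the right-hand side of \eqref{eqn:vc-erm-m} for an appropriate choice of $\sfun$. I would choose $\sfun(\lambda,\gamma) = \Log(12\gamma/(\lambda\conf))$, which makes $\sum_{j:\,2^{j} \geq \Transform(\eps)} 6 e^{-\sfun(\Transform(\eps),2^{j})}$ a geometric tail summing to at most $\conf$, so that Theorem~\ref{thm:erm} delivers the $1-\conf$ confidence claim.

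With $\sfun$ fixed, I must bound $\spec{\SC}_{\loss}(\gamma/2,\gamma;\F(\gamma;\loss),\PXY,\sfun(\Transform(\eps),\gamma))$ uniformly in $\gamma \geq \Transform(\eps)$. Applying \eqref{eqn:spec-split-bound} reduces this to bounding $\phiinv{\SC}_{\loss}(\gamma/(4\tilde{K}),\gamma;\F(\gamma;\loss),\PXY)$ and $\restinv{\SC}_{\loss}(\gamma/(4\tilde{K}),\gamma;\F(\gamma;\loss),\PXY,\sfun(\Transform(\eps),\gamma))$. For the former, \eqref{eqn:vc-subgraph-phiinv} with $\H = \F$ and $\lambda = \Transform(\eps)$ gives a bound of precisely the desired form, featuring the $\vc(\G_{\F})\Log(\capacity_{\loss})$ factor. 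For the latter, Condition~\ref{con:tsybakov-sur} yields $\D_{\loss}(\Bracket{\F}(\gamma;\loss);\PXY)^{2} \leq \tsybcb \gamma^{\tsybb}$, and plugging this into \eqref{eqn:restinv-bound} gives an $O(\sfun(\Transform(\eps),\gamma)(\tsybcb/\gamma^{2-\tsybb} + \maxloss/\gamma))$ bound. Since both expressions are nonincreasing in $\gamma$, the sup over $\gamma \geq \Transform(\eps)$ is attained (up to constants) at $\gamma = \Transform(\eps)$, and the $\sfun$-factor contributes at most a $\Log(1/\conf)$ increment that is absorbed into the second factor of \eqref{eqn:vc-erm-m}.

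Finally, I would use $\Transform(\eps) \geq \BJM(\eps)$ (by construction of $\BJM$ in \eqref{eqn:bjm-tsybakov-transform} combined with \eqref{eqn:bjm-transform}), together with the monotonicity of $\capacity_{\loss}(\cdot;\F,\PXY)$ as its argument shrinks, to replace $\Transform(\eps)$ by $\BJM(\eps)$ throughout, matching \eqref{eqn:vc-erm-m}. There is no serious obstacle here; the entire proof is an assembly of results developed in the preceding paragraphs. The one place that requires care is tracking the logarithmic factor from $\sfun(\Transform(\eps),\gamma)$ as $\gamma$ ranges over $[\Transform(\eps),\infty)$ and verifying that the $\Log(\capacity_{\loss}(\tsybcb \gamma^{\tsybb}))$ factor remains controlled by $\Log(\capacity_{\loss}(\tsybcb \BJM(\eps)^{\tsybb}))$ under this sup, but both points reduce to direct monotonicity considerations together with the same geometric tail argument used for the failure probability.
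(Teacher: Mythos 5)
Your proposal is correct and follows essentially the same route as the paper, which derives Theorem~\ref{thm:vc-erm} precisely by chaining \eqref{eqn:sc-inequalities}/\eqref{eqn:spec-sc-bound-on-bar-sc}, \eqref{eqn:spec-split-bound}, \eqref{eqn:restinv-bound}, and \eqref{eqn:vc-subgraph-phiinv} with the choice $\sfun(\lambda,\gamma) = \Log(12\gamma/(\lambda\conf))$ and then invoking Theorem~\ref{thm:erm}. The monotonicity points you flag (the sup over $\gamma$, the replacement of $\Transform(\eps)$ by $\BJM(\eps)$, and the behavior of $\capacity_{\loss}$ under a smaller argument) are exactly the routine checks the paper leaves implicit.
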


As noted by \citet*{gine:06},
in the special case when $\loss$ is itself the $0$-$1$ loss ($\loss = \ind_{[-\infty,0]}$) and $\F$ is a set of $\{-1,+1\}$-valued classifiers,
\eqref{eqn:vc-erm-m}
simplifies quite nicely, since then
$\|\Env(\G_{\Ball_{\F,\PXY}(\target, r ; \loss),\PXY})\|_{\PXY}^{2}$ $= \Px\left(\DIS\left(\Ball\left(\target,r\right)\right)\right)$,
so that $\capacity_{\loss}(r_{0}) = \dc(r_{0})$;
in this case, we also have 
$\vc(\G_{\F}) = \vc(\F)$ 
and $\BJM(\eps) = \eps/2$, and we can take $\tsybb = \tsyba$ and $\tsybcb = \tsybca$,
so that it suffices to have
\begin{equation*}
m \geq c \tsybca \eps^{\tsyba-2} \left( \vc(\F) \Log\left( \dc \right) + \Log\left(1 / \conf\right)\right),
\end{equation*}
where $\dc = \dc\left( \tsybca \eps^{\tsyba}\right)$ and $c \in [1,\infty)$ is a universal constant.
This is sometimes proportional to the minimax 
number of samples 
for passive learning \citep*{castro:08,hanneke:11a,raginsky:11}.

Next, we turn to the analysis of \ACAL~under these same conditions.
Suppose $\PXY$ satisfies Conditions \ref{con:tsybakov-01} and \ref{con:tsybakov-sur},
and for $\gamma_{0} \geq 0$, define
\begin{equation*}
\mixcap(\gamma_{0}) = \sup_{\gamma > \gamma_{0}} \frac{\Px\left( \DIS\left( \Ball\left(\target, \tsybca \InvTransform\left( \gamma \right)^{\tsyba}\right) \right) \right)}{\tsybcb \gamma^{\tsybb}} \lor 1.
\end{equation*}
We claim the following theorem, bounding the number of samples (labeled and unlabeled) sufficient for \ACAL~to obtain excess error
rate $\eps$, under the same conditions as Theorem~\ref{thm:vc-erm}.
As mentioned above, the specific definition of $\hat{T}_{\loss}$
sufficient for this theorem will be formally specified in Section~\ref{subsec:hatT-spec}.
Also, the specification of $\hat{\sfun}$ will be given in the proof, in Appendix~\ref{app:applications}.

\begin{theorem}
\label{thm:vc-subgraph-abstract}
For a universal constant $c \in [1,\infty)$,
if $\PXY$ satisfies Condition~\ref{con:tsybakov-01} and Condition~\ref{con:tsybakov-sur},
$\loss$ is classification-calibrated,
$\target \in \F$, and $\BJM$ is as in \eqref{eqn:bjm-tsybakov-transform},
for any $\eps \in (0,1)$,
letting $\dc = \dc\left( \tsybca \eps^{\tsyba} \right)$, $\mixcap = \mixcap(\BJM(\eps))$,
$A_{1} = \vc(\G_{\F}) \Log(\mixcap \maxloss) + \Log(1/\conf)$,
$C_{1} = \min\left\{\frac{1}{1-2^{(\tsyba-1)}}, \Log(\maxloss/\BJM(\eps))\right\}$,
and $B_{1} = \min\left\{C_{1}, \frac{1}{1-2^{(\tsybb-1)}}\right\}$,
if $u,n \in \nats$ satisfy
\begin{equation}
\label{eqn:vc-subgraph-abstract-u}
u \geq c \left( \frac{\tsybcb}{\BJM(\eps)^{2-\tsybb}} + \frac{\maxloss}{\BJM(\eps)}\right) A_{1},
\end{equation}
\begin{equation}
\label{eqn:vc-subgraph-abstract-n}
n \geq
c \dc \tsybca \eps^{\tsyba} \left( \frac{\tsybcb (A_{1} + \Log(B_{1})) B_{1}}{\BJM(\eps)^{2-\tsybb}} + \frac{\maxloss (A_{1} + \Log(C_{1}))C_{1}}{\BJM(\eps)}\right),
\end{equation}
then, with arguments $\loss$, $u$, and $n$, and an appropriate $\hat{\sfun}$ function,
\ACAL~uses at most $u$ unlabeled samples and makes at most $n$ label requests,
and with probability at least $1-\conf$, returns a function $\hat{h}$ with $\er(\hat{h}) - \er(\target) \leq \eps$.
\end{theorem}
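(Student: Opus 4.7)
The plan is to invoke Theorem~\ref{thm:abstract-active} and specialize each of its ingredients to the VC subgraph setting, using the complexity bound \eqref{eqn:vc-subgraph-mixed}, Condition~\ref{con:tsybakov-sur}, Condition~\ref{con:tsybakov-01}, and the disagreement coefficient. The function $\spec{\phi}_{\loss}$ used throughout will be the one defined in \eqref{eqn:ophi-min-defn} (in particular its $\spec{\phi}_{\loss}^{(1)}$ component).

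First I would bound $u_{j}$ for each integer $j$ with $j_{\loss} \leq j \leq j_{\eps}$. Using \eqref{eqn:spec-split-bound}, I would treat $\phiinv{\SC}_{\loss}$ and $\restinv{\SC}_{\loss}$ separately. For $\phiinv{\SC}_{\loss}$, I would plug \eqref{eqn:vc-subgraph-mixed} directly, noting $\vc(\G_{\F_{j}}) \leq \vc(\G_{\F})$ and $\|\Env(\G_{\F_{j},\PXY})\|_{\PXY}^{2} \leq \maxloss^{2} \Px(\DIS(\F(\InvTransform(2^{2-j});\zo)))$, which together with Condition~\ref{con:tsybakov-01} and the definition of $\mixcap$ gives the $\Log(\mixcap\maxloss)$ factor. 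For $\restinv{\SC}_{\loss}$, I would observe that $\target \in \F_{j} \subseteq \Bracket{\F}$, so Condition~\ref{con:tsybakov-sur} applies and yields $\D_{\loss}(\Bracket{\F_{j}}(2^{2-j};\loss))^{2} \lesssim \tsybcb 2^{(2-j)\tsybb}$; substituting in \eqref{eqn:restinv-bound} produces the same type $(\tsybcb 2^{j(2-\tsybb)} + \maxloss 2^{j})$ rate. Combining these gives
\[
\spec{\SC}_{\loss}(2^{-j-1},2^{2-j};\F_{j},\PXY,s) \lesssim (\tsybcb 2^{j(2-\tsybb)} + \maxloss 2^{j})(s + A_{1}),
\]
and I would choose $\hat{\sfun}(m)$ of order $A_{1} + \log_{2}(m)$, so that $u_{j}$ can be taken of this same order and the failure probability $\sum_{i} 6 e^{-\hat{\sfun}(2^{i})}$ telescopes geometrically to $O(\conf)$, while adding $s = \Log(1/\conf)$ to absorb the $2^{-s}$ term; the resulting bound on $u_{j}$ matches \eqref{eqn:vc-subgraph-abstract-u} when evaluated at $j = j_{\eps}$.

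Next I would bound $\Px(\region_{j})$. Since $\F_{j}$ is a splicing at the disagreement region of $\F(\InvTransform(2^{2-j});\zo)$, we have $\DIS(\F_{j}) \subseteq \DIS(\F(\InvTransform(2^{2-j});\zo))$; combining Condition~\ref{con:tsybakov-01} with Definition~\ref{defn:disagreement-coefficient} and $\InvTransform \leq \BJM^{-1}$ yields
\[
\Px(\region_{j}) \leq \dc(\tsybca \eps^{\tsyba})\, \tsybca \, \BJM^{-1}(2^{2-j})^{\tsyba}.
\]
The key monotonicity lemma here is that $\gamma \mapsto \BJM^{-1}(\gamma)^{\tsyba}/\gamma$ is nonincreasing, which follows from $\BJM(\eps)/\eps^{\tsyba} = \tsybca\transform(\eps^{1-\tsyba}/(2\tsybca))$ being nondecreasing. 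This gives the uniform bound $\BJM^{-1}(2^{2-j})^{\tsyba} \leq \min\{1,\,\eps^{\tsyba}\, 2^{2-j}/\BJM(\eps)\}$ valid for all $j \leq j_{\eps}$.

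Finally I would compute $\sum_{j=j_{\loss}}^{j_{\eps}} \Px(\region_{j}) u_{j}$, which splits into two geometric-style sums involving $2^{j(1-\tsybb)}$ and $2^{j \cdot 0}$ (from the $\maxloss 2^{j}$ part, after combining with $2^{-j}$ from the above $\BJM^{-1}$ bound) respectively, or alternatively $2^{-j\tsyba}$-type sums using the trivial bound $\BJM^{-1}(\cdot)^{\tsyba}\leq 1$. Taking the minimum of the two bounding techniques on each sum produces the factors $B_{1}$ and $C_{1}$ as defined: $1/(1-2^{\tsybb-1})$ comes from geometric convergence when $\tsybb < 1$, $1/(1-2^{\tsyba-1})$ from the alternative geometric convergence driven by $\Px(\region_{j})$, and the logarithmic alternative $\Log(\maxloss/\BJM(\eps))$ from crudely bounding the number of terms (using $j_{\eps} - j_{\loss} \lesssim \Log(\maxloss/\BJM(\eps))$); taking the minimum yields exactly \eqref{eqn:vc-subgraph-abstract-n}. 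The main obstacle is this last bookkeeping step: correctly identifying which of these three regimes dominates in each of the two terms of the bound, and verifying that the $\min$ construction $B_{1} = \min\{C_{1},1/(1-2^{\tsybb-1})\}$ indeed captures the tightest convergent sum across the cases $\tsyba, \tsybb \in \{<1, =1\}$, while maintaining uniform constants so that \ACAL's probability guarantee remains $1-\conf$ after choosing $\hat{\sfun}$.
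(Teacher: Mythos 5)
Your overall architecture matches the paper's: specialize Theorem~\ref{thm:abstract-active} by sizing the $u_j$ via \eqref{eqn:vc-subgraph-mixed}, \eqref{eqn:spec-split-bound} and \eqref{eqn:restinv-bound}, bound $\Px(\region_j)$ through Condition~\ref{con:tsybakov-01} and the disagreement coefficient, and then control $\sum_j \Px(\region_j) u_j$. However, two of your specific choices do not deliver the theorem as stated.

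First, taking $\hat{\sfun}(m)$ of order $A_1+\log_2(m)$ over-charges the confidence at the critical scale. Since \eqref{eqn:uj} forces $u_j \gtrsim (\tsybcb 2^{j(2-\tsybb)}+\maxloss 2^{j})(\vc(\G_{\F})\Log(\mixcap\maxloss)+\hat{\sfun}(u_j))$, and the largest $u_j$ needed has $\log_2(u_j)\gtrsim\Log(\maxloss/\BJM(\eps))$, your choice yields a requirement $u\gtrsim(\cdots)(A_1+\Log(\maxloss/\BJM(\eps)))$, strictly weaker than \eqref{eqn:vc-subgraph-abstract-u} whenever $\Log(\maxloss/\BJM(\eps))\gg A_1$. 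The paper avoids this by indexing doubly: for $m\in\{2u_{j-1},\ldots,u_j\}$ it sets $\hat{\sfun}(m)=\Log\bigl(12\log_2(4u_j/m)^2(2+\tilde{j}_{\eps}-j)^2/\conf\bigr)$ with $\tilde{j}_{\eps}=\lceil\log_2(1/\BJM(\eps))\rceil$, so the value at the critical point $m=u_j$, $j=\tilde{j}_{\eps}$ is only $\Log(48/\conf)\lesssim A_1$, while the union bound over all dyadic $m$ still sums to $\conf/2$ because each block pays only $\Log$ of its squared distance to the final scale.

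Second, and more substantively, your monotonicity lemma is true but too weak. From $\gamma\mapsto\BJM^{-1}(\gamma)^{\tsyba}/\gamma$ nonincreasing you get $\BJM^{-1}(2^{2-j})^{\tsyba}\leq\eps^{\tsyba}2^{2-j}/\BJM(\eps)$, i.e., growth $2^{i}$ in $i=\tilde{j}_{\eps}-j$; the $\maxloss$-part of the sum then becomes $\sum_i 2^{i}\cdot 2^{-i}=N+1\lesssim\Log(\maxloss/\BJM(\eps))$, and you never reach the $\frac{1}{1-2^{(\tsyba-1)}}$ alternative inside $C_1$ (likewise for $B_1$). Your fallback $\BJM^{-1}(\cdot)^{\tsyba}\leq 1$ discards the $\eps^{\tsyba}$ prefactor appearing in \eqref{eqn:vc-subgraph-abstract-n}, so it cannot recover that regime either. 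The paper instead uses that $\gamma\mapsto\BJM^{-1}(\gamma)/\gamma$ is nonincreasing and only then raises to the power $\tsyba$, obtaining $\BJM^{-1}(2^{2-\tilde{j}_{\eps}}2^{i})^{\tsyba}\leq 2^{(i+2)\tsyba}\eps^{\tsyba}$; the resulting growth $2^{i\tsyba}$ makes the sums $\sum_i 2^{i(\tsyba-1)}$ and $\sum_i 2^{i(\tsyba+\tsybb-2)}$ geometrically convergent when $\tsyba<1$ (resp.\ $\tsyba+\tsybb<2$), which, together with $\frac{1}{1-2^{(\tsyba+\tsybb-2)}}\asymp\min\{\frac{1}{1-2^{(\tsyba-1)}},\frac{1}{1-2^{(\tsybb-1)}}\}$, produces exactly the $B_1$ and $C_1$ factors. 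With these two repairs your argument coincides with the paper's.
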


To be clear, in specifying $B_{1}$ and $C_{1}$, we adopt the convention that $1/0 = \infty$ 
so that $B_{1}$ and $C_{1}$ are well-defined even when $\tsyba = 1$ or $\tsybb=1$.
When $\tsyba < 1$, 
the dependence on $\eps$ in \eqref{eqn:vc-subgraph-abstract-n} is
$O\left( \dc \eps^{\tsyba} \BJM(\eps)^{\tsybb-2} \Log(\mixcap)\right)$,
while in the case $\tsyba = \tsybb = 1$, it is
$O\left( \dc \Log(1/\eps)  ( \Log(\dc) + \Log(\Log(1/\eps))) \right)$.
Comparing Theorem~\ref{thm:vc-subgraph-abstract} to Theorem~\ref{thm:vc-erm},
the conditions on $u$ in \eqref{eqn:vc-subgraph-abstract-u} and $m$ in \eqref{eqn:vc-erm-m}
are almost identical, aside from a logarithmic factor, so that the total number of data points indicated is roughly the same.
However, the number of \emph{labels} indicated by \eqref{eqn:vc-subgraph-abstract-n} may often be significantly
smaller than the condition in \eqref{eqn:vc-erm-m}, multiplying it by roughly $\dc \tsybca \eps^{\tsyba}$.
This reduction is particularly strong when $\dc$ is bounded by a finite constant and $\tsyba$ is large.
Moreover, this is the same \emph{type} of improvement known to occur when $\loss$ is itself the $0$-$1$ loss \citep*{hanneke:11a};
in particular, in this special case, \eqref{eqn:vc-subgraph-abstract-n} is sometimes nearly minimax \citep*{hanneke:11a,raginsky:11}.
Regarding the slight difference between \eqref{eqn:vc-subgraph-abstract-u} and \eqref{eqn:vc-erm-m} from replacing
$\capacity_{\loss}$ by $\mixcap \maxloss$, the effect is somewhat mixed, and which of these is smaller may depend on $\F$ and $\loss$.
For $\loss$ the $0$-$1$ loss, $\capacity_{\loss} = \mixcap\maxloss = \dc(\tsybca (\eps/2)^{\tsyba})$.

In the 
case when $\loss$ satisfies Condition~\ref{con:strong-convexity},
we can derive the following sometimes-stronger result with the help of Lemma~\ref{lem:bjm-strong-convexity}.

\begin{theorem}
\label{thm:vc-subgraph-active-strong}
For a universal constant $c \in [1,\infty)$,
if $\PXY$ satisfies Condition~\ref{con:tsybakov-01},
$\loss$ is classification-calibrated and satisfies Condition~\ref{con:strong-convexity},
$\target \in \F$, $\BJM$ is as in \eqref{eqn:bjm-tsybakov-transform}, 
and $\tsybcb$ and $\tsybb$ are as in Lemma~\ref{lem:bjm-strong-convexity},
then for any $\eps \in (0,1)$, letting $\dc = \dc(\tsybca \eps^{\tsyba})$
and $A_{2} = \vc(\G_{\F})\Log\left(\left(\maxloss^{2} / \tsybcb\right)\left(\tsybca \dc \eps^{\tsyba} / \BJM(\eps)\right)^{\tsybb}\right)+\Log\left(1 / \conf\right)$,
and letting $C_{1}$ be as in Theorem~\ref{thm:vc-subgraph-abstract},
if $u,n \in \nats$ satisfy
\begin{equation}
\label{eqn:vc-subgraph-active-strong-u}
u \geq c \left( \frac{\tsybcb \left(\tsybca \dc \eps^{\tsyba}\right)^{1-\tsybb}}{\BJM(\eps)^{2-\tsybb}} + \frac{\maxloss}{\BJM(\eps)}\right) A_{2},
\end{equation}
\begin{equation}
\label{eqn:vc-subgraph-active-strong-n}
n \geq c \left( \tsybcb \left(\frac{\tsybca \dc \eps^{\tsyba}}{\BJM(\eps)}\right)^{2-\tsybb} + \maxloss \left(\frac{\tsybca \dc \eps^{\tsyba}}{\BJM(\eps)}\right)\right) (A_{2}+\Log(C_{1}))C_{1},
\end{equation}
then, with arguments $\loss$, $u$, and $n$, and an appropriate $\hat{\sfun}$ function, 
\ACAL~uses at most $u$ unlabeled samples and makes at most $n$ label requests,
and with probability at least $1-\conf$, returns a function $\hat{h}$ with $\er(\hat{h}) - \er(\target) \leq \eps$.
\end{theorem}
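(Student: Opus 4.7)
The plan is to invoke Corollary~\ref{cor:abstract-conditionals} rather than Theorem~\ref{thm:abstract-active}, so as to exploit the conditional-distribution interpretation of \ACAL. This is crucial under Condition~\ref{con:strong-convexity}: Lemma~\ref{lem:bjm-strong-convexity} propagates Condition~\ref{con:tsybakov-sur} (with the stated universal $\tsybcb, \tsybb$) to \emph{every} distribution whose minimizer lies in $\Bracket{\F}$, and in particular to each $\PXYR{\region_{j}}$, because $\target_{\PXYR{\region_j}}$ coincides with $\target$ on $\region_j$ and $\target \in \F$. Applying the VC subgraph bound \eqref{eqn:vc-subgraph-pre-phiinv} to $\F_{j}$ under $\PXYR{\region_{j}}$, with $\|\Env(\G_{\F_{j},\PXYR{\region_{j}}})\|_{\PXYR{\region_{j}}}^{2} \leq \maxloss^{2}$, and combining with \eqref{eqn:spec-split-bound}, \eqref{eqn:restinv-bound} and the substitution $\gamma_{2} = 2^{2-j}/\Px(\region_{j})$ yields the conditional sample-complexity bound \eqref{eqn:vc-subgraph-strong-conditional}.

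I then saturate \eqref{eqn:uj-conditional} to define the $u_{j}$, and choose $\hat{\sfun}$ so that the summed failure probability in Corollary~\ref{cor:abstract-conditionals} is at most $\conf$ (an analogue of the choice used for Theorem~\ref{thm:vc-subgraph-abstract}, with $s=\log_{2}(2/\conf)$ absorbing the $2^{-s}$ term). For the unlabeled budget, \eqref{eqn:vc-subgraph-active-strong-u} comes out by evaluating $u_{j_{\eps}}$ at $j_{\eps}=\lceil\log_{2}(1/\Transform(\eps))\rceil$: the saturated value equals $\Px(\region_{j_{\eps}})^{-1}$ times \eqref{eqn:vc-subgraph-strong-conditional}, and substituting $\Px(\region_{j_{\eps}}) \leq \tsybca \dc \eps^{\tsyba}$ (via Condition~\ref{con:tsybakov-01}, the disagreement coefficient and $\InvTransform(2^{2-j_{\eps}}) \leq \eps$) together with $2^{j_{\eps}} \leq 2/\BJM(\eps)$ produces exactly the stated form.

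For the label budget I must bound $\sum_{j=j_{\loss}}^{j_{\eps}} \Px(\region_{j}) u_{j}$. Plugging in \eqref{eqn:vc-subgraph-strong-conditional}, each summand is at most $A_{2}\bigl(\tsybcb (2^{j}\Px(\region_{j}))^{2-\tsybb} + 2^{j}\maxloss \Px(\region_{j})\bigr)$. Using $\Px(\region_{j}) \leq \tsybca \dc\, \InvTransform(2^{2-j})^{\tsyba} \leq \tsybca \dc\, \BJM^{-1}(2^{2-j})^{\tsyba}$, and the fact that $x \mapsto \BJM^{-1}(x)/x$ is nonincreasing together with $\BJM(\BJM^{-1}(2^{2-j})) \geq 2^{2-j}$, the sequence $2^{j} \BJM^{-1}(2^{2-j})^{\tsyba}$ grows geometrically, by a factor tied to $\tsyba$; so the sum collapses to the top term up to the factor $C_{1}$ (a geometric-series constant $1/(1-2^{\tsyba-1})$ when $\tsyba<1$, and a logarithm in $\maxloss/\BJM(\eps)$ when $\tsyba=1$). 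Matching the top term to $\tsybca \dc \eps^{\tsyba}/\BJM(\eps)$ recovers \eqref{eqn:vc-subgraph-active-strong-n}.

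The main obstacle is to keep the quadratic-in-$\Px(\region_{j})$ piece $(2^{j}\Px(\region_{j}))^{2-\tsybb}$ costing only a single factor of $C_{1}$, not $C_{1}^{2-\tsybb}$. The resolution is not to bound the two coupled factors $2^{j}$ and $\Px(\region_{j})$ separately, but to use the identity $\BJM(\InvTransform(2^{2-j})) \leq 2^{2-j}$ to express $2^{j}\Px(\region_{j})$ directly as $(\tsybca \dc) \cdot \InvTransform(2^{2-j})^{\tsyba}/\BJM(\InvTransform(2^{2-j}))$ multiplied by a constant. That ratio is itself monotone in $j$, so its $(2-\tsybb)$-power still forms a single geometric series whose sum is controlled by $C_{1}$ times its top term. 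The remaining work is routine bookkeeping: summing the failure probabilities over $j \in [j_{\loss},j_{\eps}]$ and $i \in [1,\log_{2}(u_{j})]$, absorbing $\Log\bigl(\mixcap\maxloss\bigr) = \Log\bigl((\maxloss^{2}/\tsybcb)(\tsybca \dc \eps^{\tsyba}/\BJM(\eps))^{\tsybb}\bigr)$ into $A_{2}$, and verifying the constraints $u_{j} \geq u_{j-1} \lor 2u_{j-2}$ via the monotonicity in $j$ of the right-hand side of \eqref{eqn:uj-conditional}.
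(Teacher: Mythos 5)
Your proposal is correct and follows essentially the same route as the paper's proof: invoke Corollary~\ref{cor:abstract-conditionals}, use Lemma~\ref{lem:bjm-strong-convexity} to get Condition~\ref{con:tsybakov-sur} under each conditional $\PXYR{\region_j}$, instantiate \eqref{eqn:vc-subgraph-strong-conditional} to define the $u_j$, and control $\sum_j \Px(\region_j) u_j$ by treating $2^j\Px(\region_j)$ as a single quantity dominated by a geometric sequence anchored at the top index via the monotonicity of $x \mapsto \BJM^{-1}(x)/x$, so that its $(2-\tsybb)$-power still sums to $C_1$ times the top term. The only caution is that monotonicity of the ratio alone would not suffice for the collapse to the top term — you need the geometric domination $2^{i(\tsyba-1)}$ relative to $i=0$, which you do correctly extract from $x\mapsto\BJM^{-1}(x)/x$ being nonincreasing, exactly as in the paper.
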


The constraint on $u$ in \eqref{eqn:vc-subgraph-active-strong-u} has
$O\left( \frac{\left(\dc \eps^{\tsyba}\right)^{1-\tsybb}}{\BJM(\eps)^{2-\tsybb}} \Log\left( \left(\frac{\dc \eps^{\tsyba}}{\BJM(\eps)}\right)^{\tsybb}\right) \right)$
dependence on $\eps$,
while 
the constraint on $n$ in \eqref{eqn:vc-subgraph-active-strong-n} has
$O\left( \left(\frac{\dc \eps^{\tsyba}}{\BJM(\eps)}\right)^{2-\tsybb} \Log\left( \left(\frac{\dc \eps^{\tsyba}}{\BJM(\eps)}\right)^{\tsybb}\right)\right)$
in the case $\tsyba < 1$, or
$O\left( \dc^{2-\tsybb} \Log(1/\eps) \Log\left( \dc^{\tsybb} \Log(1/\eps)\right) \right)$
in the case $\tsyba = 1$.
This is noteworthy when $\dc$ is small while $\tsyba > 0$ and $\convr > 2$, for at least two reasons.
First, the sufficient size of $n$ 
in \eqref{eqn:vc-subgraph-active-strong-n}
is smaller than that in Theorem~\ref{thm:vc-subgraph-abstract},
multiplying by roughly $\left(\tsybca \dc \eps^{\tsyba}\right)^{1-\tsybb}$. 
Second, even the sufficient number of \emph{unlabeled} samples
in \eqref{eqn:vc-subgraph-active-strong-u} may be smaller than the 
sufficient number of \emph{labeled} samples for $\ERM_{\loss}$
from Theorem~\ref{thm:vc-erm}, 
again multiplying by roughly 
$\left(\tsybca \dc \eps^{\tsyba}\right)^{1-\tsybb}$.
Thus, in the case $\loss$ satisfies Condition~\ref{con:strong-convexity} with $\convr > 2$,
when Theorem~\ref{thm:vc-erm} is tight, even with access to a \emph{fully labeled} data set, 
we may \emph{still} prefer to use \ACAL~rather than $\ERM_{\loss}$.
This is somewhat surprising, since (as \eqref{eqn:vc-subgraph-active-strong-n} indicates) we expect \ACAL~to ignore the
vast majority of the labels in this case.
That said, it is not clear whether there exist natural 
losses $\loss$ of this type
for which Theorem~\ref{thm:vc-erm} 
is competitive with results for methods 
directly based on the $0$-$1$ loss.
Thus, these improvements in $u$ and $n$ 
in Theorem~\ref{thm:vc-subgraph-active-strong} may simply
indicate that \ACAL~is, to some extent, \emph{compensating} for a choice of
$\loss$ that would otherwise lead to suboptimal error rates.

\subsection{Entropy Conditions}
\label{subsec:entropy}

In this section, we consider characterizations of the complexity of $\F$ in terms of \emph{entropy conditions}.
As with the above results, detailed derivations of all of these results are presented in Section~\ref{subsec:entropy-derivation} below, 
based on the abstract theorems presented in Section~\ref{subsec:abstract}.

For a distribution $P$ over $\X\times\Y$,
a set $\G \subseteq \FunctionsXY$, and $\eps \geq 0$,
let $\covering(\eps,\G,L_{2}(P))$ denote the size of a minimal
$\eps$-cover of $\G$ (that is, the minimum number of balls of radius at most $\eps$
sufficient to cover $\G$), where distances are measured
in terms of the $L_{2}(P)$ pseudo-metric: $(f,g) \mapsto \|f-g\|_{P}$.
Also, for functions $g_1 \leq g_2$, a \emph{bracket} $[g_1,g_2]$ is the set of functions $g \in \FunctionsXY$ with $g_1 \leq g \leq g_2$;
$[g_1,g_2]$ is called an $\eps$-bracket under $L_{2}(P)$ if $\|g_1 - g_2\|_{P} < \eps$.
Then $\covering_{[]}(\eps,\G,L_{2}(P))$ denotes the smallest number of $\eps$-brackets (under $L_{2}(P)$) sufficient to cover $\G$. 

The following represent two commonly-studied conditions. 
\begin{condition}
\label{con:entropy}
For some $\entc \geq 1$, $\entrho \in (0,1)$, $\Env \geq \Env(\G_{\F,\PXY})$,
either $\forall \eps > 0$,
\begin{equation}
\label{eqn:bracketing-entropy-bound}
\ln \covering_{[]}(\eps \|\Env\|_{\PXY}, \G_{\F}, L_{2}(\PXY)) \leq \entc \eps^{-2\entrho},
\end{equation}
or for all finitely discrete $P$, $\forall \eps > 0$,
\begin{equation}
\label{eqn:uniform-entropy-bound}
\ln \covering(\eps \|\Env\|_{P}, \G_{\F}, L_{2}(P)) \leq \entc \eps^{-2\entrho}.
\end{equation}
\end{condition}

The following theorem is a classic result on the performance of $\ERM_{\loss}$ under the above conditions \citep*[e.g.,][]{bartlett:06,van-der-Vaart:96}.

\begin{theorem}
\label{thm:ent-erm}
For a universal constant $c \in [1,\infty)$,
if $\PXY$ satisfies Condition~\ref{con:tsybakov-01} and Condition~\ref{con:tsybakov-sur},
$\F$ and $\PXY$ satisfy Condition~\ref{con:entropy},
$\loss$ is classification-calibrated, $\target \in \F$, and $\BJM$ is as in \eqref{eqn:bjm-tsybakov-transform},
then for any $\eps \in (0,1)$ and $m$ with 
\begin{multline*}
m \geq c \frac{\entc \|\Env\|_{\PXY}^{2\entrho}}{(1-\entrho)^{2}} \left( \frac{\tsybcb^{1-\entrho}}{\BJM(\eps)^{2 - \tsybb (1-\entrho)}} + \frac{\maxloss^{1-\entrho}}{\BJM(\eps)^{1+\entrho}}\right)
\\ + c\left(\frac{\tsybcb}{\BJM(\eps)^{2-\tsybb}} + \frac{\maxloss}{\BJM(\eps)}\right) \Log \left(\frac{1}{\conf}\right),
\end{multline*}
with probability at least $1-\conf$,
$\ERM_{\loss}(\F,\Data_{m})$ produces $\hat{h}$ with $\er(\hat{h}) - \er(\target) \leq \eps$.
\end{theorem}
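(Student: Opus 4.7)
The plan is to chain together the reductions indicated in the paragraph immediately preceding the theorem: pass from excess error rate to excess surrogate risk via $\BJM\leq\Transform$, invoke Theorem~\ref{thm:erm}, and then turn the abstract sample-complexity quantity $\bar{\SC}_{\loss}$ into the explicit bound in the theorem statement using the entropy estimate \eqref{eqn:ent-phi-bound} and the noise condition.

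First, I choose $\sfun(\lambda,\gamma)=\Log\bigl(\frac{12\gamma}{\lambda\conf}\bigr)$, exactly as indicated above the theorem. Since $\target\in\F$, Theorem~\ref{thm:erm} guarantees that whenever $m\geq \bar{\SC}_{\loss}(\Transform(\eps);\F,\PXY,\sfun)$, with probability at least $1-\sum_{j\in\ints_{\Transform(\eps)}} 6 e^{-\sfun(\Transform(\eps),2^{j})}$, $\ERM_{\loss}(\F,\Data_m)$ returns $\hat h$ with $\er(\hat h)-\er(\target)\leq\eps$. With this $\sfun$, $6 e^{-\sfun(\Transform(\eps),2^{j})}=\tfrac12\Transform(\eps)\conf/2^{j}$, and summing the geometric series over $j\in\ints_{\Transform(\eps)}$ bounds the failure probability by $\conf$.

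Second, I bound $\bar{\SC}_{\loss}(\Transform(\eps);\F,\PXY,\sfun)$. The chain of inequalities \eqref{eqn:sc-inequalities} and the fact that $\spec{\SC}_{\loss}(\cdot\,;\F,\PXY,\sfun)$ is non-increasing in its first argument (it is a supremum over $\gamma^{\prime}\geq\gamma$), together with $\Transform(\eps)\geq\BJM(\eps)$ from \eqref{eqn:bjm-transform}--\eqref{eqn:bjm-tsybakov-transform}, yield
\begin{equation*}
\bar{\SC}_{\loss}(\Transform(\eps);\F,\PXY,\sfun)
\leq \spec{\SC}_{\loss}(\BJM(\eps);\F,\PXY,\sfun)
=\sup_{\gamma^{\prime}\geq \BJM(\eps)}\spec{\SC}_{\loss}\bigl(\gamma^{\prime}/2,\gamma^{\prime};\F,\PXY,\sfun(\BJM(\eps),\gamma^{\prime})\bigr).
\end{equation*}
Splitting via \eqref{eqn:spec-split-bound}, the inner quantity is at most the max of $\phiinv{\SC}_{\loss}(\tilde c\gamma^{\prime}/2,\gamma^{\prime};\F,\PXY)$ and $\restinv{\SC}_{\loss}(\tilde c\gamma^{\prime}/2,\gamma^{\prime};\F,\PXY,\sfun(\BJM(\eps),\gamma^{\prime}))$.

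Third, I estimate each term. For the $\phiinv{\SC}$ piece I plug in \eqref{eqn:ent-phiinv-tsybakov}; because both exponents $\tsybb(1-\entrho)-2$ and $-(1+\entrho)$ are strictly negative, the right-hand side is monotonically decreasing in $\gamma^{\prime}$, so the supremum is attained at $\gamma^{\prime}=\BJM(\eps)$, producing exactly the first displayed term of the theorem's lower bound on $m$. For the $\restinv{\SC}$ piece I use \eqref{eqn:restinv-bound} together with Condition~\ref{con:tsybakov-sur}, which gives $\D_{\loss}(\Bracket{\F}(\gamma^{\prime};\loss,\PXY);\PXY)^{2}\leq 4\tsybcb(\gamma^{\prime})^{\tsybb}$ (a triangle inequality around $\target\in\Bracket{\F}$); this yields a bound proportional to $\sfun(\BJM(\eps),\gamma^{\prime})\cdot\max\{\tsybcb/(\gamma^{\prime})^{2-\tsybb},\maxloss/\gamma^{\prime}\}$, which is again decreasing in $\gamma^{\prime}$ up to the logarithmic $\sfun$ factor. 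At $\gamma^{\prime}=\BJM(\eps)$ we have $\sfun(\BJM(\eps),\BJM(\eps))=\Log(12/\conf)\lesssim\Log(1/\conf)$, which produces the second displayed term. Combining the two ranges and absorbing the constants into a single universal $c$ yields the theorem.

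The only non-routine point is confirming that the supremum over $\gamma^{\prime}\geq\BJM(\eps)$ is genuinely attained near $\gamma^{\prime}=\BJM(\eps)$: the polynomial decrease in $\gamma^{\prime}$ of both $\phiinv{\SC}_{\loss}$ and $\restinv{\SC}_{\loss}$ must dominate the slow logarithmic growth of $\sfun(\BJM(\eps),\gamma^{\prime})=\Log(12\gamma^{\prime}/(\BJM(\eps)\conf))$, which it does because the exponents of $\gamma^{\prime}$ appearing in the maxes are uniformly negative and bounded away from $0$. Everything else is bookkeeping of the constants already computed in \eqref{eqn:ent-phi-bound} and \eqref{eqn:ent-phiinv-tsybakov}.
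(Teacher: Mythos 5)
Your proposal is correct and follows essentially the same route the paper intends: combining Theorem~\ref{thm:erm} with \eqref{eqn:sc-inequalities}, \eqref{eqn:spec-split-bound}, \eqref{eqn:restinv-bound}, and \eqref{eqn:ent-phiinv-tsybakov} under the choice $\sfun(\lambda,\gamma)=\Log\bigl(\frac{12\gamma}{\lambda\conf}\bigr)$, with the two displayed terms arising from the $\phiinv{\SC}_{\loss}$ and $\restinv{\SC}_{\loss}$ pieces respectively. Your attention to the supremum over $\gamma^{\prime}\geq\BJM(\eps)$ being controlled at the endpoint despite the logarithmic growth of $\sfun$ is exactly the bookkeeping the paper leaves implicit.
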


Turning to the analogous setting for active learning, we are able to establish the following theorem on the performance of \ACAL~under these same conditions.

\begin{theorem}
\label{thm:ent-active}
For a universal constant $c \in [1,\infty)$,
if $\PXY$ satisfies Condition~\ref{con:tsybakov-01} and Condition~\ref{con:tsybakov-sur},
$\F$ and $\PXY$ satisfy Condition~\ref{con:entropy},
$\loss$ is classification-calibrated,
$\target \in \F$, and $\BJM$ is as in \eqref{eqn:bjm-tsybakov-transform},
then for any $\eps \in (0,1)$, letting $B_{1}$ and $C_{1}$ be as in Theorem~\ref{thm:vc-subgraph-abstract},
$B_{2} = \min\left\{ B_{1}, \frac{1}{1-2^{-\entrho}} \right\}$,
$C_{2} = \min\left\{ C_{1}, \frac{1}{1-2^{-\entrho}} \right\}$,
and abbreviating $\dc = \dc\left(\tsybca \eps^{\tsyba}\right)$, if $u,n \in \nats$ satisfy
\begin{multline}
\label{eqn:entropy-abstract-u}
u \geq c \frac{\entc \|\Env\|_{\PXY}^{2\entrho}}{(1-\entrho)^{2}} \left( \frac{\tsybcb^{1-\entrho}}{\BJM(\eps)^{2 - \tsybb (1-\entrho)}} + \frac{\maxloss^{1-\entrho}}{\BJM(\eps)^{1+\entrho}}\right)
\\ + c\left(\frac{\tsybcb}{\BJM(\eps)^{2-\tsybb}} + \frac{\maxloss}{\BJM(\eps)}\right) \Log \left(\frac{1}{\conf}\right),
\end{multline}
\begin{multline}
\label{eqn:entropy-abstract-n}
n \geq c \dc \tsybca \eps^{\tsyba} \frac{\entc \|\Env\|_{\PXY}^{2\entrho}}{(1-\entrho)^{2}} \left( \frac{\tsybcb^{1-\entrho} B_{2}}{\BJM(\eps)^{2 - \tsybb (1-\entrho)}} + \frac{\maxloss^{1-\entrho} C_{2}}{\BJM(\eps)^{1+\entrho}}\right)
\\ + c \dc \tsybca \eps^{\tsyba} \left(\frac{\tsybcb B_{1} \Log(B_{1}/\conf)}{\BJM(\eps)^{2-\tsybb}} + \frac{\maxloss C_{1} \Log(C_{1}/\conf)}{\BJM(\eps)}\right),
\end{multline}
then, with arguments $\loss$, $u$, and $n$, and an appropriate $\hat{\sfun}$ function, 
\ACAL~uses at most $u$ unlabeled samples and makes at most $n$ label requests,
and with probability at least $1-\conf$, returns a function $\hat{h}$ with $\er(\hat{h}) - \er(\target) \leq \eps$.
\end{theorem}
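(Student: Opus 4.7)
The plan is to invoke Theorem~\ref{thm:abstract-active} with $\spec{\phi}_{\loss}$ as in \eqref{eqn:ophi-min-defn}. By the monotonicity properties of $\spec{\phi}_{\loss}^{(1)}$ and $\spec{\phi}_{\loss}^{(2)}$ established after \eqref{eqn:uniform-ophi} and \eqref{eqn:bracketing-ophi}, the entropy-based bound \eqref{eqn:ent-phi-bound} also applies to each $\F_j$ in place of $\F$, since $\F_j$ is obtained from a subset of $\F$ by splicing with $\target$. Condition~\ref{con:tsybakov-sur} controls the loss-diameter of the risk sublevel sets (note $\target \in \F_j$), so combining \eqref{eqn:ent-phiinv-tsybakov} with \eqref{eqn:spec-split-bound} and \eqref{eqn:restinv-bound} yields, for every $j_{\loss} \leq j \leq j_{\eps}$ and every $s \in [1,\infty)$,
\begin{align*}
\spec{\SC}_{\loss}(2^{-j-1},2^{2-j};\F_j,\PXY,s) &\lesssim \frac{\entc \|\Env\|_{\PXY}^{2\entrho}}{(1-\entrho)^{2}} \max\!\left\{\tsybcb^{1-\entrho} 2^{j(2-\tsybb(1-\entrho))},\, \maxloss^{1-\entrho} 2^{j(1+\entrho)}\right\} \\
&\qquad + s \cdot \max\!\left\{\tsybcb\, 2^{j(2-\tsybb)},\, \maxloss \cdot 2^{j}\right\}.
\end{align*}
Rounding up to the next power of $2$ produces an admissible $u_j$ for \eqref{eqn:uj}; evaluating at $j = j_{\eps}$, using $2^{-j_{\eps}} \leq \Transform(\eps)$ together with $\Transform(\eps) \geq \BJM(\eps)$, and taking $\hat{\sfun}(u_{j_{\eps}})$ of order $\Log(1/\conf)$ verifies the unlabeled-budget condition \eqref{eqn:entropy-abstract-u}.

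The main step is then to control $\sum_{j=j_{\loss}}^{j_{\eps}} \Px(\region_j)\, u_j$, which Theorem~\ref{thm:abstract-active} requires in order to bound $n$. Here the disagreement coefficient enters: since $\F_j \subseteq \F(\InvTransform(2^{2-j});\zo)$, Condition~\ref{con:tsybakov-01} together with $\InvTransform \leq \BJM^{-1}$ gives
\[
\Px(\region_j) \;=\; \Px\bigl(\DIS(\F_j)\bigr) \;\leq\; \dc\bigl(\tsybca \eps^{\tsyba}\bigr) \cdot \tsybca\, \BJM^{-1}(2^{2-j})^{\tsyba}.
\]
Substituting this into $\Px(\region_j) u_j$ and invoking the fact that $x \mapsto \BJM^{-1}(x)/x$ is nonincreasing, the sum becomes a geometric series whose common ratio is determined by the sign of $\tsyba - (2-\tsybb(1-\entrho))$, of $\tsyba - (1+\entrho)$, of $\tsyba - (2-\tsybb)$, or of $\tsyba - 1$. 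In each regime the closed-form telescoping factor is exactly $B_2$ (entropy term, $\tsybcb$ regime), $C_2$ (entropy term, $\maxloss$ regime), $B_1$ (restinv term, $\tsybcb$ regime), or $C_1$ (restinv term, $\maxloss$ regime). Substituting $\BJM^{-1}(2^{2-j_{\eps}}) \lesssim \eps$ at the top scale, which follows from the monotonicity of $\BJM^{-1}(x)/x$ applied to $2^{2-j_{\eps}} \leq 4\Transform(\eps)$, recovers exactly \eqref{eqn:entropy-abstract-n} up to absolute constants.

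The hard part will be the choice of $\hat{\sfun}$: it must make $\hat{\sfun}(u_j)$ contribute only a mild $\Log$ factor to each $u_j$ while keeping the union bound $\sum_{i=1}^{\log_2 u_{j_{\eps}}} 6\,e^{-\hat{\sfun}(2^i)}$ below $\conf/2$. As in the proof of Theorem~\ref{thm:vc-subgraph-abstract}, taking $\hat{\sfun}(2^i)$ of the form $\Log(c\,i^{2}/\conf)$ works, and the resulting $\Log(B_1/\conf)$, $\Log(C_1/\conf)$ overhead from the restinv contribution at the top of the sum produces exactly the second group of terms in \eqref{eqn:entropy-abstract-n}. A secondary subtlety is the borderline cases $\tsyba = 1$, $\tsyba = 2-\tsybb(1-\entrho)$, $\tsyba = 1+\entrho$, or $\tsyba = 2-\tsybb$, in which the geometric ratios equal $1$; these are precisely the cases in which $B_1, B_2, C_1, C_2$ saturate at their $1/(1-2^{\cdot})$ or $\Log(\maxloss/\BJM(\eps))$ caps, so the resulting bound $\sum_j \Px(\region_j) u_j \lesssim j_{\eps} \cdot (\text{single-term value})$ is absorbed cleanly into the stated constants, and the conclusion follows directly from Theorem~\ref{thm:abstract-active}.
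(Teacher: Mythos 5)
Your proposal follows exactly the route the paper takes: the paper states that Theorem~\ref{thm:ent-active} is obtained by combining \eqref{eqn:ent-phiinv-tsybakov} with \eqref{eqn:spec-split-bound}, \eqref{eqn:restinv-bound}, and Theorem~\ref{thm:abstract-active}, with the remaining details (the choice of $u_{j}$ and $\hat{\sfun}$, the disagreement-coefficient bound on $\Px(\region_{j})$, and the geometric summation yielding the $B_{1},B_{2},C_{1},C_{2}$ factors) carried out verbatim as in the proof of Theorem~\ref{thm:vc-subgraph-abstract}, which is precisely what you describe. The argument is correct, including the justification via \eqref{eqn:ophi-1} that the entropy bound for $\F$ transfers to each spliced class $\F_{j}$ and the identification of the four convergence regimes with the stated caps.
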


The constraint on $u$ in \eqref{eqn:entropy-abstract-u} 
is identical (up to constant factors)
to the 
sample size
in Theorem~\ref{thm:ent-erm} sufficient
for $\ERM_{\loss}$ to achieve the same. 
In contrast, when $\dc$ is small, 
the constraint on $n$ in \eqref{eqn:entropy-abstract-n} 
improves this,
multiplying by a factor $\propto \dc \tsybca \eps^{\tsyba}$.

As before, 
when $\loss$ satisfies Condition~\ref{con:strong-convexity},
we can derive sometimes-stronger results via Lemma~\ref{lem:bjm-strong-convexity}. 
In this case, we will distinguish between the cases
of \eqref{eqn:uniform-entropy-bound} and \eqref{eqn:bracketing-entropy-bound}, as we find
a slightly stronger result for the former.
We begin with the following result, under the uniform entropy condition \eqref{eqn:uniform-entropy-bound}.

\begin{theorem}
\label{thm:uniform-entropy-strong-convexity}
For a universal constant $c \in [1,\infty)$,
if $\PXY$ satisfies Condition~\ref{con:tsybakov-01},
$\loss$ is classification-calibrated and satisfies Condition~\ref{con:strong-convexity}, $\target \in \F$,
$\BJM$ is as in \eqref{eqn:bjm-tsybakov-transform},
$\tsybcb$ and $\tsybb$ are as in Lemma~\ref{lem:bjm-strong-convexity},
and \eqref{eqn:uniform-entropy-bound} is satisfied with $\Env \leq \maxloss$ 
($\forall$ finitely discrete $P$, $\forall \eps > 0$),
then $\forall \eps \in (0,1)$, for $C_{1}$ as in Theorem~\ref{thm:vc-subgraph-abstract}
and $\dc = \dc\left(\tsybca \eps^{\tsyba}\right)$,
if 
\begin{multline*}
u  \geq c \left(\frac{\entc \maxloss^{2\entrho}}{(1-\entrho)^{2}}\right)\left(
\left(\frac{\tsybcb^{1-\entrho}}{\BJM(\eps)}\right)\left(\frac{ \tsybca \dc \eps^{\tsyba}}{\BJM(\eps)}\right)^{1-\tsybb(1-\entrho)}
+ \left(\frac{\maxloss^{1-\entrho}}{\BJM(\eps)}\right)\left(\frac{\tsybca \dc \eps^{\tsyba}}{\BJM(\eps)}\right)^{\entrho}\right)
\\ + c \left( \left(\frac{\tsybcb}{\BJM(\eps)}\right) \left(\frac{\tsybca \dc \eps^{\tsyba}}{\BJM(\eps)}\right)^{1-\tsybb}
+ \frac{\maxloss}{\BJM(\eps)}\right)\Log\left(\frac{1}{\conf}\right),
\end{multline*}
\begin{multline*}
n \geq
c \left(\frac{\entc \maxloss^{2\entrho} C_{1}}{(1-\entrho)^{2}}\right)
\left(\tsybcb^{1-\entrho} \left(\frac{\tsybca \dc \eps^{\tsyba}}{\BJM(\eps)}\right)^{2-\tsybb(1-\entrho)}
+ \maxloss^{1-\entrho}\left(\frac{\tsybca \dc \eps^{\tsyba}}{\BJM(\eps)}\right)^{1+\entrho} \right)
\\ + c \left( \tsybcb \left(\frac{\tsybca \dc \eps^{\tsyba}}{\BJM(\eps)}\right)^{2-\tsybb}
+ \maxloss \left(\frac{\tsybca \dc \eps^{\tsyba}}{\BJM(\eps)}\right)\right) C_{1} \Log\left(\frac{C_{1}}{\conf}\right),
\end{multline*}
then, with arguments $\loss$, $u$, and $n$,
and an appropriate $\hat{\sfun}$ function, 
\ACAL~uses at most $u$ unlabeled samples and makes at most $n$ label requests,
and with probability at least $1-\conf$, returns a function $\hat{h}$ with $\er(\hat{h}) - \er(\target) \leq \eps$.
\end{theorem}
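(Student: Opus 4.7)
The plan is to parallel the proof of Theorem~\ref{thm:vc-subgraph-active-strong} almost verbatim, swapping in the uniform entropy bound for the VC subgraph bound. Specifically, I would invoke Corollary~\ref{cor:abstract-conditionals} with the displayed bound on $\spec{\SC}_{\loss}\!\bigl(2^{-j-7}/\Px(\region_j),\, 2^{2-j}/\Px(\region_j);\, \F_j,\, \PXYR{\region_j},\, s\bigr)$ appearing immediately before the theorem statement, which itself combines \eqref{eqn:ent-phiinv-tsybakov}, \eqref{eqn:spec-split-bound}, \eqref{eqn:restinv-bound}, and Lemma~\ref{lem:bjm-strong-convexity} (the last providing Condition~\ref{con:tsybakov-sur} universally via Condition~\ref{con:strong-convexity}). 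The hypothesis $\Env \leq \maxloss$ is precisely what ensures the uniform entropy bound transfers under conditioning on $\region_j$, since the envelope of $\G_{\F_j,\PXYR{\region_j}}$ remains bounded by $\maxloss$.

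First I would fix $j_{\eps} = \lceil \log_{2}(1/\Transform(\eps))\rceil$ and, for each $j_{\loss}\leq j\leq j_{\eps}$, take $u_j$ to be the smallest power of $2$ satisfying \eqref{eqn:uj-conditional} with this displayed bound substituted. A convenient choice is $\hat{\sfun}(m) = \Log\!\bigl(12 \log_{2}(2m)^{2}/\conf\bigr)$, making $\sum_{i}6e^{-\hat{\sfun}(2^{i})} \leq \conf$ and $\hat{\sfun}(u_j)\lesssim \Log(C_{1}/\conf)$. Multiplying the displayed $\spec{\SC}_{\loss}$ bound by $\Px(\region_j)$ gives
\begin{equation*}
\Px(\region_j)u_j \lesssim \tfrac{\entc\maxloss^{2\entrho}}{(1-\entrho)^{2}}\Bigl[\tsybcb^{1-\entrho}(2^{j}\Px(\region_j))^{2-\tsybb(1-\entrho)} + \maxloss^{1-\entrho}(2^{j}\Px(\region_j))^{1+\entrho}\Bigr] + \bigl[\tsybcb(2^{j}\Px(\region_j))^{2-\tsybb} + \maxloss\,2^{j}\Px(\region_j)\bigr]\hat{\sfun}(u_j).
\end{equation*}

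Next I would uniformly bound $2^{j}\Px(\region_j)$ for $j\leq j_{\eps}$. Since $\region_j = \DIS(\F(\InvTransform(2^{2-j});\zo))$, Condition~\ref{con:tsybakov-01} bounds the radius of $\F(\InvTransform(2^{2-j});\zo)$ by $\tsybca\InvTransform(2^{2-j})^{\tsyba}$, whence $\Px(\region_j)\leq \dc\cdot\tsybca\max\{\InvTransform(2^{2-j})^{\tsyba},\eps^{\tsyba}\}$ from the disagreement coefficient. Using $\InvTransform\leq\BJM^{-1}$ together with the fact that $x\mapsto \BJM^{-1}(x)/x$ is nonincreasing, both branches of the max satisfy $2^{j}\Px(\region_j)\lesssim \tsybca\dc\eps^{\tsyba}/\BJM(\eps)$. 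Summing the preceding display over $j_{\loss}\leq j\leq j_{\eps}$, each exponent $\alpha\in\{1,\,2-\tsybb,\,1+\entrho,\,2-\tsybb(1-\entrho)\}$ contributes at most $(\tsybca\dc\eps^{\tsyba}/\BJM(\eps))^{\alpha}\cdot C_{1}$, the $C_1$ factor arising from the geometric-type summation in $j$ (saturating to $\log_{2}(\maxloss/\BJM(\eps))$ when $\tsyba=1$). Re-grouping the four resulting terms according to $\phiinv{\SC}$ vs.\ $\restinv{\SC}$ origin gives exactly the claimed bound on $n$. For the unlabeled condition, the hypothesis $u\geq u_{j_{\eps}}$ reduces to the same estimate at $j=j_{\eps}$ alone (without the $C_{1}$ multiplier): rearranging $u_{j_{\eps}}\gtrsim \Px(\region_{j_{\eps}})^{-1}\cdot(\text{bracketed terms at }j_{\eps})$ and substituting $2^{j_{\eps}}\asymp 1/\BJM(\eps)$ yields exactly the displayed lower bound on $u$.

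The main obstacle will be the bookkeeping of the summation over $j$: verifying the uniform bound $2^{j}\Px(\region_j)\lesssim \tsybca\dc\eps^{\tsyba}/\BJM(\eps)$ in both branches of the max (i.e., both when $\InvTransform(2^{2-j})>\eps$ and $\leq\eps$), checking that each of the four exponents $\alpha$ yields its claimed factor without spurious extra powers of $\dc$ or $\tsybca$, and keeping the $\phiinv{\SC}$ contribution (free of the $\Log(1/\conf)$ factor) separate from the $\restinv{\SC}$ contribution (which carries $\Log(C_{1}/\conf)$) throughout, so that the two parenthesized groupings in the theorem's lower bounds on $u$ and on $n$ emerge exactly as stated. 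Careful handling of the degenerate endpoints $\tsyba=1$ and $\entrho\to 1$, where geometric sums collapse to logarithms, is where most of the attention will be required.
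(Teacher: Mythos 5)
Your proposal matches the paper's own argument: the paper proves this theorem exactly by plugging the displayed bound on $\spec{\SC}_{\loss}$ (for the conditional distributions $\PXYR{\region_j}$) into Corollary~\ref{cor:abstract-conditionals} and then repeating, step for step, the proof of Theorem~\ref{thm:vc-subgraph-active-strong} — including the bound $2^{j}\Px(\region_j) \lesssim 2^{(\tilde{j}_{\eps}-j)(\tsyba-1)}\tsybca\dc\eps^{\tsyba}/\BJM(\eps)$ and the resulting geometric summation yielding $C_1$. The one small caveat is your choice $\hat{\sfun}(m) = \Log(12\log_{2}(2m)^{2}/\conf)$: as the paper notes after Theorem~\ref{thm:vc-subgraph-abstract}, this variant costs an extra $\Log(\Log(\maxloss/\BJM(\eps)))$ term, so when $\tsyba<1$ (where $C_1 = O(1)$) it does not quite give the stated $\Log(C_1/\conf)$ factor; to recover the theorem verbatim you need the $j$-dependent choice $\hat{\sfun}(m) = \Log\bigl(12\log_{2}(4u_j/m)^{2}(2+\tilde{j}_{\eps}-j)^{2}/\conf\bigr)$ used in the paper's proofs.
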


Compared to Theorem~\ref{thm:ent-active}, 
the constraints for $u$ and $n$ here may have improved dependences on $\eps$, multiplying by
$O\left(\left(\dc \eps^{\tsyba}\right)^{1-\tsybb(1-\entrho)}\right)$.
Furthermore, 
for small $\dc$,
these 
are also smaller than 
the 
size of $m$
for $\ERM_{\loss}(\F,\Data_{m})$
from Theorem~\ref{thm:ent-erm}.

Next, we turn to the bracketing entropy condition \eqref{eqn:bracketing-entropy-bound}.
For simplicity, we will only consider the case that \eqref{eqn:bracketing-entropy-bound} 
is satisfied with $\Env = \maxloss$ constant.
In this case, we have the following result.

\begin{theorem}
\label{thm:bracketing-entropy-strong-convexity}
For a universal constant $c \in [1,\infty)$,
if $\PXY$ satisfies Condition~\ref{con:tsybakov-01},
$\loss$ is classification-calibrated and satisfies Condition~\ref{con:strong-convexity},
$\target \in \F$,
$\BJM$ is as in \eqref{eqn:bjm-tsybakov-transform},
$\tsybcb$ and $\tsybb$ are as in Lemma~\ref{lem:bjm-strong-convexity},
and \eqref{eqn:bracketing-entropy-bound} is satisfied with
$\Env = \maxloss$, 
then
$\forall \eps \in (0,1)$, letting
$C_{1}$ be as in Theorem~\ref{thm:vc-subgraph-abstract},
$C_{2}$ be as in Theorem~\ref{thm:ent-active},
and $\dc = \dc\left(\tsybca \eps^{\tsyba}\right)$,
if 
\begin{multline*}
u \geq c \left(\frac{\entc \maxloss^{2\entrho}}{(1-\entrho)^{2}}\right)
\left(\left(\frac{\tsybcb^{1-\entrho}}{\BJM(\eps)^{1+\entrho}}\right)\left(\frac{\tsybca \dc \eps^{\tsyba}}{\BJM(\eps)}\right)^{(1-\tsybb)(1-\entrho)}
+
\frac{\maxloss^{1-\entrho}}{\BJM(\eps)^{1+\entrho}}\right)
\\ +
c \left( \left(\frac{\tsybcb}{\BJM(\eps)}\right)
\left(\frac{\tsybca \dc \eps^{\tsyba}}{\BJM(\eps)}\right)^{1-\tsybb}
+ \frac{\maxloss}{\BJM(\eps)}\right)\Log\left(\frac{1}{\conf}\right),
\end{multline*}
\begin{multline*}
n \geq c \left(\frac{\entc \maxloss^{2\entrho} C_{2}}{(1-\entrho)^{2}}\right)
\left(\left(\frac{\tsybcb^{1-\entrho}}{\BJM(\eps)^{\entrho}}\right)
\left(\frac{\tsybca \dc \eps^{\tsyba}}{\BJM(\eps)}\right)^{1+(1-\tsybb)(1-\entrho)}
+
\frac{\maxloss^{1-\entrho}\tsybca \dc \eps^{\tsyba}}{\BJM(\eps)^{1+\entrho}}\right)
\\ +
c \left( \tsybcb \left(\frac{\tsybca \dc \eps^{\tsyba}}{\BJM(\eps)}\right)^{2-\tsybb}
+ \maxloss \left(\frac{\tsybca \dc \eps^{\tsyba}}{\BJM(\eps)}\right)\right) C_{1} \Log\left(\frac{C_{1}}{\conf}\right),
\end{multline*}
then, with arguments $\loss$, $u$, and $n$,
and an appropriate $\hat{\sfun}$ function, 
\ACAL~uses at most $u$ unlabeled samples
and makes at most $n$ label requests, and with probability at least $1-\conf$,
returns a function $\hat{h}$ with $\er(\hat{h}) - \er(\target) \leq \eps$.
\end{theorem}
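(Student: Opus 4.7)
The plan is to follow the template already laid out for Theorem~\ref{thm:vc-subgraph-active-strong} and Theorem~\ref{thm:uniform-entropy-strong-convexity}, using as input the bound on $\spec{\SC}_{\loss}\left(\tfrac{2^{-j-7}}{\Px(\region_j)}, \tfrac{2^{2-j}}{\Px(\region_j)}; \F_j, \PXYR{\region_j}, s\right)$ that is already derived in the excerpt just before the theorem statement. That derivation hinges on the observation that, for any $\region$, a bracket for $\G_\F$ of $L_2(\PXY)$-width $\eps\maxloss\sqrt{\Px(\region)}$ restricts to a bracket for $\G_{\F_j}$ of $L_2(\PXYR{\region})$-width at most $\eps\maxloss$, so condition~\eqref{eqn:bracketing-entropy-bound} passes to the conditional distribution with an inflation of $\Px(\region_j)^{-\entrho}$ in the exponent; combined with Lemma~\ref{lem:bjm-strong-convexity} and~\eqref{eqn:ent-phiinv-tsybakov} this yields the displayed $\spec{\SC}_{\loss}$ estimate, whose four terms I would feed directly into~\eqref{eqn:uj-conditional}.

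Next, I would specify the confidence function: take $\hat{\sfun}(m)$ of the form $\Log(12\log_2(2m)^2/\conf) + \text{const}$, chosen so that summing $6 e^{-\hat{\sfun}(2^i)}$ over $i\le \log_2 u_{j_\eps}$ and then applying Corollary~\ref{cor:abstract-conditionals} gives total failure probability at most $\conf$. With this $\hat\sfun$, each $\hat\sfun(u_j)$ contributes only an additive $\Log(1/\conf)$ term, matching the $\Log(C_1/\conf)$ factor in the stated bound on $n$. I would then read off the minimal $u_j$ that satisfies~\eqref{eqn:uj-conditional}: it is the sum of the four terms from the $\spec{\SC}_{\loss}$ bound, each rescaled by $4\Px(\region_j)^{-1}$, plus the trivial $\lor u_{j-1} \lor 2u_{j-2}$ constraint which is absorbed by monotonicity in $j$.

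The core computation is then $\sum_{j=j_\loss}^{j_\eps}\Px(\region_j)u_j$. Using the disagreement-coefficient argument of Section~\ref{subsec:disagreement-coefficient}, Condition~\ref{con:tsybakov-01}, and $\InvTransform(2^{2-j})\le\BJM^{-1}(2^{2-j})$, I would bound $\Px(\region_j)\le \dc\cdot \tsybca\, \BJM^{-1}(2^{2-j})^{\tsyba}$. Multiplying through $\Px(\region_j)$ cancels the factor $\Px(\region_j)^{-1}$ in $u_j$, leaves $\Px(\region_j)^{1-\entrho}$ on the two entropy terms, and $\Px(\region_j)^{0}$ on the two noise-deviation terms. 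Since $x\mapsto \BJM^{-1}(x)/x$ is nonincreasing, the resulting sums are geometric and are dominated up to the constants $C_1,C_2$ from Theorem~\ref{thm:vc-subgraph-abstract} and Theorem~\ref{thm:ent-active} by their largest term at $j=j_\eps$, where $\BJM^{-1}(2^{2-j_\eps})\asymp\eps$ and $\Px(\region_{j_\eps})\le\dc\tsybca\eps^{\tsyba}$. Collecting, the four geometric sums give exactly the four terms in the stated lower bound on $n$, with the characteristic exponent $1+(1-\tsybb)(1-\entrho)$ on $\tsybca\dc\eps^\tsyba/\BJM(\eps)$ arising from the entropy term of the form $\tsybcb^{1-\entrho}\Px(\region_j)^{1-\entrho}(2^j)^{2-\tsybb(1-\entrho)}$. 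The sufficient size of $u$ is then simply $u_{j_\eps}$, read off at $j=j_\eps$; bounding $\Px(\region_{j_\eps})\le 1$ inside the $\Px(\region_j)^{1-\entrho}$ factor but by $\dc\tsybca\eps^\tsyba$ inside the $\Px(\region_j)^{1-\tsybb}$ factor yields the mixed exponents in the stated $u$-bound.

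The main obstacle is the bookkeeping around the extra $\Px(\region_j)^{-\entrho}$ factor peculiar to bracketing entropy, which is precisely why this bound is weaker than the uniform-entropy analog in Theorem~\ref{thm:uniform-entropy-strong-convexity}: the entropy-driven term in $n$ here carries $(\tsybca\dc\eps^\tsyba/\BJM(\eps))^{1+(1-\tsybb)(1-\entrho)}$ rather than $(\tsybca\dc\eps^\tsyba/\BJM(\eps))^{2-\tsybb(1-\entrho)}$. Getting this exponent (and the corresponding one in $u$) right requires tracking the interaction between the noise exponents $\tsybb$, the entropy exponent $\entrho$, and the $\Px(\region_j)^{-\entrho}$ inflation through the geometric summation, and verifying that the convergence/divergence of that series is controlled by $C_1$ and $C_2$ in the same way as in the earlier results.
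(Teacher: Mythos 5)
Your proposal is correct and follows essentially the same route as the paper: the conditional bracketing-entropy bound with its $\Px(\region_j)^{-\entrho}$ inflation yields the displayed $\spec{\SC}_{\loss}$ estimate, which is fed into Corollary~\ref{cor:abstract-conditionals}, and the sum $\sum_j \Px(\region_j) u_j$ is then handled exactly as in the proof of Theorem~\ref{thm:vc-subgraph-active-strong} via the disagreement coefficient, the monotonicity of $x \mapsto \BJM^{-1}(x)/x$, and the geometric summations controlled by $C_1$ and $C_2$. Your intermediate bookkeeping of the powers of $\Px(\region_j)$ left on each term after cancellation is stated imprecisely, but the final exponents you report (in particular $1+(1-\tsybb)(1-\entrho)$ and the mixed exponents in the $u$-bound) are the correct ones.
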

Compared to Theorem~\ref{thm:ent-active}, 
the dependence on $\eps$ in
the 
sizes for both $u$ and $n$ may be smaller here, multiplying by
$O\!\left(\left(\dc\eps^{\tsyba}\right)^{(1-\tsybb)(1-\entrho)}\right)$,
which is sometimes significant, though not quite as 
dramatic a reduction
as we found under \eqref{eqn:uniform-entropy-bound} in Theorem~\ref{thm:uniform-entropy-strong-convexity}.
As with Theorem~\ref{thm:uniform-entropy-strong-convexity}, when $\dc(\eps^{\tsyba}) = o(\eps^{-\tsyba})$,
the sizes of $u$ and $n$ indicated by 
Theorem~\ref{thm:bracketing-entropy-strong-convexity} are smaller
than the results for $\ERM_{\loss}(\F,\Data_{m})$ from Theorem~\ref{thm:ent-erm}.

\subsection{An Example: Discrete Distributions}
\label{subsec:example}


%
As a concrete example applying the above results,
we find that \ACAL~generally provides some 
benefits for discrete $\Px$ distributions.
%
To describe these benefits quantitatively, consider the special case
where $\exists x_{1},x_{2},\ldots \!\in \X$ with $\Px(\{x_{i}\}) = \frac{90}{\pi^{4} i^{4}}$,
and $\eta(x) \in [0,\bound] \cup [1-\bound,1]$ for each $x \in \X$, where $\bound \in [0,1/2)$ is a constant.
Set $\F = \{ f \in \Functions : \sup_{x \in \X} |f(x)| \leq 1 \}$, 
and take $\loss$ to be the quadratic loss (in which case $\maxloss = 4$).
In particular, since $\target(x) = 2\eta(x)-1 \in [-1,1]$, the condition $\target \in \F$ is satisfied in this scenario.
We will use Theorem~\ref{thm:ent-active} to bound the number of labels sufficient for \ACAL~to achieve excess error rate $\eps$.
For any $g \in \Functions$, we have $\er(g)-\er(\target) = \sum_{i \in \nats} \ind_{\DIS(\{g,\target\})}(x_{i}) |1-2\eta(x_{i})| \Px(\{x_{i}\}) \geq (1-2\bound) \dist(g,\target)$,
so that Condition~\ref{con:tsybakov-01} is satisfied with $\tsyba=1$ and $\tsybca = 1/(1-2\bound)$.
Furthermore, $\F$ is convex, and this $\loss$ satisfies Condition~\ref{con:strong-convexity},
with $\tsybb = 1$ and $\tsybcb = 32$ in Lemma~\ref{lem:bjm-strong-convexity}.
Also, since $\transform(x) = x^{2}$ 
here \citep*{bartlett:06},
we have that $\BJM(\eps) = \eps^{2-\tsyba} / (4 \tsybca) = (1-2\bound) \eps / 4$.
%
Additionally, this scenario satisfies 
\eqref{eqn:bracketing-entropy-bound} in Condition~\ref{con:entropy}
with $\entc = \frac{7}{\omega}$ and $\entrho = \frac{1}{3}+\omega$, for any choice of $\omega \in (0,1/2]$;
we include a simple proof of this fact in Appendix~\ref{subsec:example-derivations}. 
Finally, we bound 
$\dc(r_{0})$ for $r_{0} \in (0,1]$.
For any $r \in (0,1)$, we have 
$\DIS(\Ball(\target,r)) \cap \{x_{i} : i \in \nats\} $ $= \left\{x_{i} : \frac{90}{\pi^{4} i^{4}} \leq r\right\}$,
so that $\Px(\DIS(\Ball(\target,r))) \lesssim \sum_{i \gtrsim r^{-1/4}} i^{-4} \lesssim r^{3/4}$.
Therefore, $\dc(r_{0}) \lesssim r_{0}^{-1/4}$. 

Plugging these values into Theorem~\ref{thm:ent-active}, 
and choosing $\omega = \left(\ln\left(\frac{1}{(1-2\bound)\eps}\right)\right)^{-1}$, 
we find that there is a label budget $n$,
sufficient to guarantee $\er(\hat{h}) - \er(\target) \leq \eps$ with probability at least $1-\conf$ in \ACAL, 
with dependence $\Theta\left( \eps^{-7/12} \Log(1/\eps) \right)$ on $\eps$. 
For comparison, the corresponding bound for $\ERM_{\loss}$ from Theorem~\ref{thm:ent-erm} has dependence $\Theta\left( \eps^{-4/3} \Log(1/\eps) \right)$. 
This is larger than the above bound by a factor $\Theta\left( \eps^{-3/4} \right)$.
%
%
%
%
%
%
Furthermore, one can show an $\Omega(\eps^{-4/3})$ \emph{lower bound} on the sample size necessary 
to obtain $\eps$ \emph{minimax} expected excess error rate for passive learning in this scenario.
%
Thus, \ACAL~achieves a significant improvement
over the guarantees achievable by \emph{all} passive learning methods.  
The details of this minimax lower bound
are included in Appendix~\ref{subsec:example-derivations}.



\ignore{That said, it is worth noting that \ACAL~itself is not an optimal active learning algorithm
for this scenario.  In particular, by a more direct approach, one can easily show that the 
minimax optimal number of labels sufficient for active learning to achieve expected excess error $\eps$
in this scenario has dependence on $\eps$ of roughly $\eps^{-1/3}$ (up to logarithmic factors).}

\subsection{An Example: Linear Functions}
\label{subsec:linsep}

As another example applying the above results, consider the class of \emph{homogeneous linear functions}.
Specifically, fix any $k \in \nats$ with $k \geq 5$, $\X = \{ x \in \reals^{k} : \|x\| \leq 1 \}$,
and consider the class $\F = \{x \mapsto w \cdot x : w \in \reals^{k}, \|w\| \leq 1\}$. 
Take $\loss$ as the 
quadratic loss (in which case $\maxloss = 4$).  Together with the assumption of $\target \in \F$, 
this restricts $\PXY$ to have $\eta(x) = (w \cdot x + 1)/2$ (almost everywhere),
for some $w \in \reals^{k}$ with $\|w\| \leq 1$.
Furthermore, this $\loss$ satisfies Condition~\ref{con:strong-convexity},
with $\tsybb = 1$ and $\tsybcb = 32$ in Lemma~\ref{lem:bjm-strong-convexity},
and has $\BJM(\eps) = \eps^{2-\tsyba} / (4 \tsybca)$.
It is also known that $\vc(\G_{\F}) \lesssim k$ (following from arguments of \citep*{dudley:78,haussler:92}).
Additionally, for this class $\F$,
it is known that if $\Px$ has a density (with respect to Lebesgue measure),
then $\dc(\eps) = o(1/\eps)$ \citep*{hanneke:survey}.
Together, these facts imply that, if $\Px$ has a density,
the sufficient size of $n$ in Theorem~\ref{thm:vc-subgraph-active-strong}
has dependence on $\eps$ that is $o\left( \eps^{\tsyba-2} \Log(1/\eps) \right)$.
%
We also note that, 
by varying $\Px$, it is possible to realize any $\tsyba$
value in $(0,1]$ in Condition~\ref{con:tsybakov-01} \citep*[see][]{cavallanti:11,dekel:12}.

To exhibit a concrete example, consider the simple scenario 
of $\Px$ uniform on\break $\{x \in \reals^{k} : \|x\| = 1\}$,
and suppose $\PXY$ is such that $\target \in \F$.
For simplicity, also suppose the $w \in \reals^{k}$ with $\target(x) = w \cdot x$
satisfies $\|w\| = 1$. 
In this case, one can show that Condition~\ref{con:tsybakov-01} is 
satisfied with $\tsybca \propto k^{1/4}$ and $\tsyba = 1/2$. 
For completeness, 
a proof of this is included in Appendix~\ref{subsec:linsep-derivations}.
It is also known that $\dc(\eps) \leq \pi \sqrt{k}$ for this scenario \citep*{hanneke:07b}.
Plugging all of this into Theorem~\ref{thm:vc-subgraph-active-strong} 
reveals that, for \ACAL~to achieve excess error rate $\eps$ with probability at least 
$1-\conf$ (given sufficiently large $u$), it suffices to have a label budget $n$ of size at least 
\begin{equation*} 
c \frac{k}{\eps} \left( k \Log\left( \frac{k}{\eps} \right) + \Log\left(\frac{1}{\conf}\right) \right),
\end{equation*}
for a universal constant $c > 0$.
In contrast, Theorem~\ref{thm:vc-erm} gives a sufficient sample size for $\ERM_{\loss}(\F,\cdot)$ 
proportional to $\frac{k^{1/4}}{\eps^{3/2}} \left( k \Log(k) + \Log(1/\conf) \right)$, 
which is significantly larger than the above size of $n$ for $\eps$ sufficiently small. 
To our knowledge, it is not presently known what the optimal sample complexity of passive learning is for this scenario,
so that in contrast to the previous example, here we can only claim an improvement in the upper bound.
%
We note that \citet*{dekel:12} have also studied active learning with 
this $\F$ and $\loss$ under the same assumption of $\target \in \F$,
and established a similar result to the above (with slightly better dependence on $k$
but slightly worse logarithmic factors),
via a learning method tailored specifically to this function class.

\section{General Theorems}
\label{sec:abstract}

The remainder of the article is devoted to a general analysis of \ACAL, 
from which we derive the more-explicit theorems stated above.
The results are formulated analogously to localization arguments
common in the literature on empirical risk minimization, but with a 
slight twist to introduce a relevant subregion to the argument.  As such, 
we begin with a discussion of general localized sample complexity bounds.

\subsection{Localized Sample Complexities}
\label{subsec:rademacher}

The derivation of localized excess risk bounds is essentially motivated as follows.
We are interested in bounding the excess $\loss$-risk of
the $\hat{h}$ returned by $\ERM_{\loss}(\H,\Data_{m})$.
%
Suppose we have a coarse guarantee $U_{\loss}(\H,m)$ on this value:
that is, $\Risk(\hat{h})-\inf_{h \in \H} \Risk(h)$ $\leq U_{\loss}(\H,m)$.
In a sense, this guarantee identifies a set $\H^{\prime}$ $\subseteq$ $\H$ of functions that
a priori may have the \emph{potential} to be returned by $\ERM_{\loss}(\H,\Data_{m})$ (namely,
$\H^{\prime} = \H(U_{\loss}(\H,m);\loss)$), while those in $\H \setminus \H^{\prime}$ do not.
With this information in hand, we can think of $\H^{\prime}$
as a kind of \emph{effective} function class, and we can 
think of $\ERM_{\loss}(\H,\Data_{m})$
as equivalent to $\ERM_{\loss}(\H^{\prime},\Data_{m})$.
We may then repeat this same reasoning, now thinking of $\hat{h}$ as the function 
returned by $\ERM_{\loss}(\H^{\prime},\Data_{m})$: 
that is, we calculate $U_{\loss}(\H^{\prime},m)$ to determine
a further subset $\H^{\prime\prime} = \H^{\prime}(U_{\loss}(\H^{\prime},m);\loss) \subseteq \H^{\prime}$
of functions that we again expect to contain the empirical minimizer $\hat{h}$, so that
$\ERM_{\loss}(\H^{\prime},\Data_{m}) = \ERM_{\loss}(\H^{\prime\prime},\Data_{m})$, and so on.
This repeats until we
identify a fixed-point set $\H^{(\infty)}$ of functions such that
$\H^{(\infty)}(U_{\loss}(\H^{(\infty)},m);\loss)$ $= \H^{(\infty)}$,
so that no further reduction is possible.
Following this chain of reasoning back to the beginning, we find that
$\ERM_{\loss}(\H,\Data_{m}) = \ERM_{\loss}(\H^{(\infty)},\Data_{m})$, so that the function $\hat{h}$ returned by
$\ERM_{\loss}(\H,\Data_{m})$ has excess $\loss$-risk at most $U_{\loss}(\H^{(\infty)},m)$, which may be
significantly smaller than $U_{\loss}(\H,m)$, depending on 
how $U_{\loss}(\H,m)$ varies with $\H$.

To formalize this fixed-point argument for $\ERM_{\loss}(\H,\Data_{m})$,
\citet*{koltchinskii:06} makes use of the following quantities
to define the coarse bound $U_{\loss}(\H,m)$ \citep*[see also][]{bartlett:05,gine:06}.
For any $\H \subseteq \Bracket{\F}$, $m \in \nats$, $s \in [1,\infty)$,
and any distribution $P$ on $\X \times \Y$, letting $S \sim P^{m}$, define
\begin{align*}
\phi_{\loss}(\H; m,P) &= \E\left[\sup_{h,g \in \H} \left(\Risk(h;P) - \Risk(g;P)\right) - \left(\Risk(h;S) - \Risk(g;S)\right) \right],
\\ \bar{U}_{\loss}(\H; P, m, s) & = \bar{K}_{1} \phi_{\loss}(\H; m,P) + \bar{K}_{2} \D_{\loss}(\H;P) \sqrt{\frac{s}{m}} + \frac{\bar{K}_{3} \maxloss s}{m},\\
\tilde{U}_{\loss}(\H; P, m,s) & = \tilde{K} \left( \phi_{\loss}(\H; m,P) + \D_{\loss}(\H;P) \sqrt{\frac{s}{m}} + \frac{\maxloss s}{m}\right),
\end{align*}
where
$\bar{K}_{1}$, $\bar{K}_{2}$, $\bar{K}_{3}$, and $\tilde{K}$ are appropriately chosen constants.

We will be interested in having access to these quantities in the context of our algorithms; however,
since $\PXY$ is not directly accessible to the algorithm, we will need to approximate these by data-dependent estimators.
Toward this end, we define the following quantities, again taken from the work of \citet*{koltchinskii:06}.
For any $\H \subseteq \Bracket{\F}$, $q \in \nats$,
and $S = \{(x_1,y_1),\ldots,(x_q,y_q)\} \in (\X \times \{-1,+1\})^{q}$,
let $\H(\eps;\loss,S) = \{h \in \H : \Risk(h;S) - \inf_{g \in \H} \Risk(g;S) \leq \eps\}$;
then for any sequence $\Xi = \{\xi_k\}_{k=1}^{q} \in \{-1,+1\}^{q}$, and any $s \in [1,\infty)$,
define
\begin{align*}
\Rademacher_{\loss}(\H;S,\Xi) &= \sup_{h,g \in \H} \frac{1}{q} \sum_{k=1}^{q} \xi_{k} \cdot \left( \loss(h(x_k)y_k) -\loss(g(x_k)y_k) \right),
\\ \hat{\D}_{\loss}(\H;S)^{2} &= \sup_{h,g \in \H} \frac{1}{q} \sum_{k=1}^{q} \left( \loss(h(x_k)y_k) - \loss(g(x_k)y_k) \right)^{2},
\\ \hat{U}_{\loss}(\H;S,\Xi,s) &= 12 \Rademacher_{\loss}(\H;S,\Xi) + 34\hat{\D}_{\loss}(\H;S) \sqrt{\frac{s}{q}} + \frac{752 \maxloss s}{q}.
\end{align*}
For completeness,
let
$\Rademacher_{\loss}(\H;\emptyset,\emptyset) = \hat{\D}_{\loss}(\H;\emptyset) = 0$, and
$\hat{U}_{\loss}(\H; \emptyset, \emptyset, s) = 752 \maxloss s$.

The above $U$ quantities (with appropriate choices of
$\bar{K}_{1}$, $\bar{K}_{2}$, $\bar{K}_{3}$, and $\tilde{K}$)
can be formally related to each other and to the excess $\loss$-risk of functions in $\H$ via the
following general result; this variant is due to \citet*{koltchinskii:06}.

\begin{lemma}
\label{lem:koltchinskii}
For any $\H \subseteq \Bracket{\F}$, $s \in [1,\infty)$,
distribution $P$ over $\X \times \Y$,
and any $m \in \nats$,
if $S \sim P^{m}$ and $\Xi = \{\xi_1,\ldots,\xi_m\} \sim {\rm Uniform}(\{-1,+1\})^{m}$ are independent,
and $h^{*} \in \H$ has $\Risk(h^{*};P) = \inf_{h \in \H} \Risk(h;P)$,
then with probability at least $1 - 6e^{-s}$, the following claims hold.
\begin{align*}
\forall h \in \H, \Risk(h;P) - \Risk(h^{*};P) & \leq \Risk(h ; S) - \Risk(h^{*} ; S) + \bar{U}_{\loss}(\H;P,m,s), 
\\ \forall h \in \H, \Risk(h ; S) - \inf_{\g \in \H} \Risk(\g ; S) & \leq\Risk(h;P) - \Risk(h^{*};P) + \bar{U}_{\loss}(\H;P,m,s), 
\\ \bar{U}_{\loss}(\H;P,m,s) & < \hat{U}_{\loss}(\H;S,\Xi,s) < \tilde{U}_{\loss}(\H;P,m,s). 
\end{align*}
\end{lemma}

We typically expect the quantities $\bar{U}$, $\hat{U}$, and $\tilde{U}$ to be roughly within constant factors of each other.
Following \citet*{koltchinskii:06} and \citet*{gine:06},
we can use this result
to derive localized bounds on the
number of samples sufficient for $\ERM_{\loss}(\H,\Data_{m})$
to achieve a given excess $\loss$-risk.
Specifically, for $\H \subseteq \Bracket{\F}$, distribution $P$ over $\X \times \Y$,
values $\gamma,\gamma_{1},\gamma_{2} \geq 0$, 
$s \in [1,\infty)$,
and any function $\sfun : (0,\infty)^2 \to [1,\infty)$,
define the following quantities.
\begin{align*}
\bar{\SC}_{\loss}(\gamma_{1},\gamma_{2};\H,P,s) & =
\min\left\{m \in \nats : \bar{U}_{\loss}(\H(\gamma_{2};\loss,P);P,m,s) < \gamma_{1}\right\},
\\ \bar{\SC}_{\loss}(\gamma;\H,P,\sfun) & = \sup_{\gamma^{\prime} \geq \gamma} \bar{\SC}_{\loss}(\gamma^{\prime}/2,\gamma^{\prime};\H,P,\sfun(\gamma,\gamma^{\prime})),
\end{align*}
\begin{align*}
\tilde{\SC}_{\loss}(\gamma_{1},\gamma_{2}; \H,P,s) & = \min\left\{ m \in \nats : \tilde{U}_{\loss}(\H(\gamma_{2};\loss,P);P,m,s) \leq \gamma_{1}\right\},
\\ \tilde{\SC}_{\loss}(\gamma; \H,P,\sfun) & = \sup_{\gamma^{\prime} \geq \gamma} \tilde{\SC}_{\loss}(\gamma^{\prime}/2,\gamma^{\prime}; \H,P,\sfun(\gamma,\gamma^{\prime})).
\end{align*}
These quantities are well-defined for
$\gamma_1,\gamma_2,\gamma > 0$ when
$\lim_{m\to\infty} \phi_{\loss}(\H;m,P) = 0$.
In other cases, for completeness,
we define them to be $\infty$.

In particular, the quantity $\bar{\SC}_{\loss}(\gamma;\F,\PXY,\sfun)$ is used
in Theorem~\ref{thm:erm} below to quantify the performance
of $\ERM_{\loss}(\F,\Data_{m})$.
The primary practical challenge in calculating $\bar{\SC}_{\loss}(\gamma;\H,P,\sfun)$ is handling the $\phi_{\loss}(\H(\gamma^{\prime};\loss,P);m,P)$ quantity.
In the literature, the typical (only?) way such calculations are approached
is by first deriving a bound on $\phi_{\loss}(\H^{\prime};m,P)$ for every $\H^{\prime} \subseteq \H$
in terms of some natural measure of complexity for the full class $\H$ (e.g., entropy numbers)
and some very basic measure of complexity for $\H^{\prime}$:
most often $\D_{\loss}(\H^{\prime};P)$ and sometimes a seminorm of an envelope function.
After this, one then proceeds to bound these basic measures of complexity for the specific subsets
$\H(\gamma^{\prime};\loss,P)$, as a function of $\gamma^{\prime}$.
Composing these two results is then sufficient to bound $\phi_{\loss}(\H(\gamma^{\prime};\loss,P);m,P)$.
For instance, bounds based on an entropy integral tend to follow this strategy.
This approach effectively decomposes the problem of calculating the complexity of $\H(\gamma^{\prime};\loss,P)$
into the problem of calculating the complexity of $\H$ and the problem of calculating some more basic properties
of $\H(\gamma^{\prime};\loss,P)$.
See \citep*{van-der-Vaart:96,koltchinskii:06,gine:06,bartlett:06},
or Section~\ref{subsec:phi-bounds} below, for several explicit examples of this technique.

Another technique often (though not always) used in conjunction with the above strategy
when deriving explicit rates of convergence is to relax
$\D_{\loss}(\H(\gamma^{\prime};\loss,P);P)$ to $\D_{\loss}(\Functions(\gamma^{\prime};\loss,P);P)$
or $\D_{\loss}(\Bracket{\H}(\gamma^{\prime};\loss,P);P)$.
This relaxation can sometimes be a source of slack;
however,
in many interesting cases, such as
for certain losses or noise conditions,
this approach can still lead to nearly tight bounds \citep*{bartlett:06,mammen:99,tsybakov:04}.

For our purposes, it 
is convenient to make these
common techniques explicit in the results.
This will make the benefits of our proposed method more apparent,
while still allowing us to state results in a form abstract enough to encompass 
the 
more-specific complexity measures referenced in the theorems of Section~\ref{sec:explicit}.
%
Toward this end, we have the following definition 
(recall the definitions of $h_{\region,g}$ and $\H_{\region,g}$ from Section~\ref{sec:acal} above).
\begin{definition}
\label{defn:abstract-ophi}
For every distribution $P$ over $\X \times \Y$,
let $\spec{\phi}_{\loss}(\sigma,\H;m,P)$ be a quantity defined for every
$\sigma \in [0,\infty]$, $\H \subseteq \Bracket{\F}$, and $m \in \nats$,
such that the following conditions are satisfied when $\target_{P} \in \H$.
\begin{align}
&\text{If } 0 \leq \sigma \leq \sigma^{\prime}, \H \subseteq \H^{\prime} \subseteq \Bracket{\F}, \region \subseteq \X, \text{ and } m^{\prime} \leq m, \notag
\\ &\text{then } \spec{\phi}_{\loss}(\sigma,\H_{\region,\target_{P}};m,P) \leq \spec{\phi}_{\loss}(\sigma^{\prime},\H^{\prime};m^{\prime},P). \label{eqn:ophi-1}
\\ &\forall \sigma \geq \D_{\loss}(\H;P), \phi_{\loss}(\H;m,P) \leq \spec{\phi}_{\loss}(\sigma,\H;m,P). \label{eqn:ophi-2}
\end{align}
\end{definition}
For instance, most bounds based on entropy integrals can be made to satisfy this.
Section~\ref{subsec:phi-bounds} states explicit examples of quantities
$\spec{\phi}_{\loss}$ from the literature that satisfy this definition.
Given a function $\spec{\phi}_{\loss}$ of this type, we define the following quantity
for $m \in \nats$, $s \in [1,\infty)$, $\zeta \in [0,\infty]$, $\H \subseteq \Bracket{\F}$, and a distribution $P$ over $\X\times\Y$.
\begin{align*}
& \spec{U}_{\loss}(\H,\zeta; P, m,s)
\\ & = \tilde{K} \left( \spec{\phi}_{\loss}(\D_{\loss}(\Bracket{\H}(\zeta;\loss,P);P),\H; m,P) + \D_{\loss}(\Bracket{\H}(\zeta;\loss,P);P) \sqrt{\frac{s}{m}} + \frac{\maxloss s}{m}\right).
\end{align*}
Note that when $\target_{P} \in \H$,
since $\D_{\loss}(\Bracket{\H}(\gamma;\loss,P);P) \geq \D_{\loss}(\H(\gamma;\loss,P);P)$,
Definition~\ref{defn:abstract-ophi} implies
$\phi_{\loss}(\H(\gamma;\loss,P);m,\!P) \leq \spec{\phi}_{\loss}(\D_{\loss}(\Bracket{\H}(\gamma;\loss,P);P),\!\H(\gamma;\loss,P);m,\!P)$,
and furthermore $\H(\gamma;\loss,P) \subseteq \H$ so that
$\spec{\phi}_{\loss}(\D_{\loss}(\Bracket{\H}\!(\gamma;\loss,P);P),\H(\gamma;\loss,P);m,P) \leq$\break
$\spec{\phi}_{\loss}(\D_{\loss}(\Bracket{\H}(\gamma;\loss,P);P),\H;m,P)$.
Thus,
\begin{equation}
\label{eqn:oU-vs-tildeU}
\tilde{U}_{\loss}(\H(\gamma;\loss,P);P,m,s)
\leq \spec{U}_{\loss}(\H(\gamma;\loss,P),\gamma;P,m,s)
\leq \spec{U}_{\loss}(\H,\gamma;P,m,s).
\end{equation}
Furthermore, when $\target_{P} \in \H$,  
for any measurable $\region \subseteq \region^{\prime} \subseteq \X$,
any $\gamma^{\prime} \geq \gamma \geq 0$,
and any $\H^{\prime} \subseteq \Bracket{\F}$ with $\H \subseteq \H^{\prime}$,
\begin{equation}
\label{eqn:oUHsmall-vs-oUHbig}
\spec{U}_{\loss}(\H_{\region,\target_{P}},\gamma;P,m,s) \leq \spec{U}_{\loss}(\H^{\prime}_{\region^{\prime},\target_{P}},\gamma^{\prime};P,m,s).
\end{equation}
Note that the fact that we use $\D_{\loss}(\Bracket{\H}(\gamma;\loss,P);P)$
instead of $\D_{\loss}(\H(\gamma;\loss,P);P)$ in the definition of $\spec{U}_{\loss}$
is crucial for these inequalities to hold;
specifically, it is not necessarily true that
$\D_{\loss}(\H_{\region,\target_{P}}(\gamma;\loss,P);P) \leq \D_{\loss}(\H_{\region^{\prime},\target_{P}}(\gamma;\loss,P);P)$,
but it is always the case that
$\Bracket{\H_{\region,\target_{P}}}(\gamma;\loss,P) \subseteq \Bracket{\H_{\region^{\prime},\target_{P}}}(\gamma;\loss,P)$
when $\target_{P} \in \Bracket{\H}$,
and therefore
$\D_{\loss}(\Bracket{\H_{\region,\target_{P}}}(\gamma;\loss,P);P) \leq \D_{\loss}(\Bracket{\H_{\region^{\prime},\target_{P}}}(\gamma;\loss,P);P)$.

Finally, for $\H \subseteq \Bracket{\F}$,
distribution $P$ over $\X \times \Y$,
values $\gamma,\gamma_{1},\gamma_{2} \geq 0$, $s \in [1,\infty)$,
and any function $\sfun : (0,\infty)^2 \to [1,\infty)$,
define
\begin{align*}
\spec{\SC}_{\loss}(\gamma_{1},\gamma_{2};\H,P,s) & = \min\left\{ m \in \nats : \spec{U}_{\loss}(\H,\gamma_{2};P,m,s) \leq \gamma_{1}\right\},
\\ \spec{\SC}_{\loss}(\gamma;\H,P,\sfun) & = \sup_{\gamma^{\prime} \geq \gamma} \spec{\SC}_{\loss}(\gamma^{\prime}/2,\gamma^{\prime}; \H,P,\sfun(\gamma,\gamma^{\prime})).
\end{align*}
For completeness, define $\spec{\SC}_{\loss}(\gamma_{1},\gamma_{2};\H,P,s) = \infty$
when
$\spec{U}_{\loss}(\H,\gamma_{2};P,m,s) > \gamma_{1}$
for every $m \in \nats$.

It will often be convenient to isolate the terms in $\spec{U}_{\loss}$ when inverting for a sufficient $m$,
thus arriving at an upper bound on $\spec{\SC}_{\loss}$.
Specifically, define
\begin{align*}
& \restinv{\SC}_{\loss}(\gamma_{1},\gamma_{2};\H,P,s) = \min\left\{ m \in \nats :
\D_{\loss}(\Bracket{\H}(\gamma_{2};\loss,P);P) \sqrt{\frac{s}{m}} + \frac{\maxloss s}{m} \leq \gamma_{1}\right\},
\\ & \phiinv{\SC}_{\loss}(\gamma_{1},\gamma_{2};\H,P) = \min\left\{ m \in \nats : \spec{\phi}_{\loss}\left(\D_{\loss}(\Bracket{\H}(\gamma_{2};\loss,P);P),\H;m,P\right) \leq \gamma_{1}\right\}.
\end{align*}
This way, for $\tilde{c} = 1/(2\tilde{K})$, we have
\begin{equation}
\label{eqn:spec-split-bound}
\spec{\SC}_{\loss}(\gamma_{1},\gamma_{2};\H,P,s)
\leq \max\left\{\phiinv{\SC}_{\loss}(\tilde{c} \gamma_{1}, \gamma_{2}; \H,P), \restinv{\SC}_{\loss}(\tilde{c} \gamma_{1}, \gamma_{2}; \H,P,s)\right\}.
\end{equation}
Also note that we clearly have
\begin{equation}
\label{eqn:restinv-bound}
\restinv{\SC}_{\loss}(\gamma_{1},\gamma_{2};\H,P,s) \leq
s \cdot \max\left\{\frac{4\D_{\loss}(\Bracket{\H}(\gamma_{2};\loss,P);\loss,P)^{2}}{\gamma_{1}^{2}}, \frac{2\maxloss}{\gamma_{1}}\right\},
\end{equation}
so that, in the task of bounding $\spec{\SC}_{\loss}$, we can simply focus on bounding $\phiinv{\SC}_{\loss}$.

We will express our main abstract results below in terms of the incremental values $\spec{\SC}_{\loss}(\gamma_{1},\gamma_{2};\H,\PXY,s)$;
the quantity
$\spec{\SC}_{\loss}(\gamma;\H,\PXY,\sfun)$
will also be useful in deriving explicit 
results for $\ERM_{\loss}$.
When $\target_{P} \in \H$, \eqref{eqn:oU-vs-tildeU} implies 
\begin{equation}
\label{eqn:sc-inequalities}
\bar{\SC}_{\loss}(\gamma;\H,P,\sfun)
\leq \tilde{\SC}_{\loss}(\gamma;\H,P,\sfun)
\leq \spec{\SC}_{\loss}(\gamma;\H,P,\sfun).
\end{equation}

\subsection{General Analysis of Empirical Risk Minimization}
\label{subsec:passive}

Based on Lemma~\ref{lem:koltchinskii} and the above definitions,
one can derive a bound on the number of labeled data
points $m$ sufficient for $\ERM_{\loss}(\F,\Data_{m})$
to achieve a given excess error rate.  Specifically,
the following theorem is due to \citet*{koltchinskii:06}
(slightly modified here, following \citet*{gine:06}, to allow for general $\sfun$ functions).  
It will 
be useful for deriving Theorems~\ref{thm:vc-erm} and \ref{thm:ent-erm}.
For $\eps > 0$, let $\ints_{\eps} = \{j \in \ints : 2^j \geq \eps\}$.
\begin{theorem}
\label{thm:erm}
Fix any function $\sfun : (0,\infty)^{2} \to [1,\infty)$.
If $\target \in \F$,
then for any $m \geq \bar{\SC}_{\loss}(\Transform(\eps);\F,\PXY,\sfun)$,
with probability at least
$1 - \sum_{j \in \ints_{\Transform(\eps)}} 6 e^{-\sfun(\Transform(\eps),2^{j})}$,
$\ERM_{\loss}(\F,\Data_{m})$ produces a function $\hat{h}$ such that $\er(\hat{h}) - \er(\target) \leq \eps$.
\end{theorem}

\subsection{Specification of $\hat{T}_{\loss}$ in \ACAL}
\label{subsec:hatT-spec}

The quantity $\hat{T}_{\loss}$ in \ACAL~can be defined in one of several possible ways.
In our present abstract context, we consider the following definition.
Let $\{\xi_{k}^{\prime}\}_{k \in \nats}$ denote independent Rademacher random variables
(i.e., uniform in $\{-1,+1\}$), also independent from $\Data$;
these should be considered internal random variables used by the algorithm,
which is therefore a randomized algorithm.
For any $q \in \nats \cup \{0\}$ and $Q = \{(i_1,y_1),\ldots,(i_q,y_q)\} \in (\nats \times \{-1,+1\})^{q}$,
let $\Xi[Q] = \{\xi_{i_k}^{\prime}\}_{k=1}^{q}$,
and for $s \geq 1$, 
define
$\hat{U}_{\loss}(\H;Q,s) = \hat{U}_{\loss}(\H; S[Q], \Xi[Q], s)$,
where $S[Q] = \{(X_{i_1},y_1),\ldots,(X_{i_q},y_q)\}$, as previously defined.
Then we can define the quantity $\hat{T}_{\loss}$ in the method above as
\begin{equation}
\label{eqn:hatT-acal-U}
\hat{T}_{\loss}(\H;Q,m) = \hat{U}_{\loss}(\H;Q,\hat{\sfun}(m)),
\end{equation}
for some $\hat{\sfun} : \nats \to [1,\infty)$.
This definition has the appealing property that it allows us to interpret the update
in Step 6 in two complementary ways: as comparing the empirical risks of functions in $V$ under 
samples from the conditional distribution of $(X,Y)$ given $X \in \DIS(V)$, 
and as comparing the empirical risks of the functions in $V_{\DIS(V)}$ under samples from the original distribution $\PXY$.
Our abstract results below are based on this definition of $\hat{T}_{\loss}$.   
This can sometimes be problematic due to the computational challenge of 
the optimization problems in the definitions of $\Rademacher_{\loss}$ and $\hat{\D}_{\loss}$.  There 
has been considerable work on calculating and bounding $\Rademacher_{\loss}$ for various
classes $\F$ and losses $\loss$ \citep*[e.g.,][]{koltchinskii:01,bartlett:02}, but it is not always feasible.  
However, the specific theorems stated in Section~\ref{sec:explicit} above continue to hold if we instead take $\hat{T}_{\loss}$
based on a well-chosen upper bound on the respective $\spec{U}_{\loss}$ function, such as 
those obtained in the derivations of those respective results below; we provide descriptions
of such efficiently-computable relaxations, for each of these results, in 
Appendix~\ref{sec:hatT} 
(though in some cases, these bounds have a mild dependence on $\PXY$ via certain parameters
of the specific 
noise conditions considered there).

\subsection{General Analysis of \ACAL}
\label{subsec:abstract}

The following theorem represents our main abstract result.
The key steps in its proof were already sketched above in Section~\ref{sec:acal}.
The complete proof is included in 
Appendix~\ref{app:a2}. 

\begin{theorem}
\label{thm:abstract-active}
Fix any function $\hat{\sfun} : \nats \to [1,\infty)$.  Let $j_{\loss} \!= -\lceil \log_{2}(\maxloss) \rceil$, 
$u_{j_{\loss}-2} = u_{j_{\loss}-1} = 1$,
and for each integer $j \geq j_{\loss}$,
let $\F_{j} \!=\! \F(\InvTransform(2^{2-j});\zo)_{\DIS(\F(\InvTransform(2^{2-j});\zo))}$,
$\region_{j} = \DIS(\F_{j})$,
and suppose $u_{j} \in \nats$ satisfies $\log_{2}(u_{j}) \in \nats$ and
\begin{equation}
\label{eqn:uj}
u_{j} \geq 2\spec{\SC}_{\loss}(2^{-j-1},2^{2-j};\F_{j},\PXY,\hat{\sfun}(u_{j})) \lor u_{j-1} \lor 2 u_{j-2}.
\end{equation}
Suppose $\target \in \F$.
For any $\eps \in (0,1)$, $s \in [1,\infty)$, letting $j_{\eps} = \lceil \log_{2}(1/\Transform(\eps)) \rceil$,
if
\begin{align*}
& u \geq u_{j_{\eps}}
&& \text{ and } &&
n \geq s + 2 e \sum_{j = j_{\loss} }^{j_{\eps}} \Px(\region_{j}) u_{j},
\end{align*}
then, with arguments $\loss$, $u$, and $n$,
\ACAL~uses at most $u$ unlabeled samples, requests at most $n$ labels,
and with probability at least
$1-2^{-s} - \sum_{i=1}^{\log_{2}( u_{j_{\eps}} )} 6 e^{-\hat{\sfun}( 2^{i} )}$,
returns a function $\hat{h}$ with $\er(\hat{h}) - \er(\target) \leq \eps$.
\end{theorem}

In defining and calculating the values $\spec{\SC}_{\loss}$ in Theorem~\ref{thm:abstract-active},
it is sometimes convenient to use the alternative interpretation of \ACAL,
in terms of sampling the set $S[Q]$ from 
the conditional distribution given the region of disagreement.
Specifically, 
for any measurable $\region \subseteq \X$ with $\Px(\region) > 0$, 
define the probability measure
$\PXYR{\region}(\cdot) = \PXY(\cdot | \region\times\Y)$: 
that is, $\PXYR{\region}$ is the conditional distribution of $(X,Y) \sim \PXY$ given that $X \in \region$.
Generally, for any probability measure $P$ on $\X\times\Y$, 
and any measurable $\region \subseteq \X\times\Y$ with $P(\region) > 0$,
define $P_{\region}(\cdot) = P(\cdot|\region)$.
Also, for any $\H \subseteq \Functions$, define the \emph{region of value-disagreement}
$\DISF(\H) = \{x \in \X : \exists h,g \in \H \text{ s.t. } h(x) \neq g(x)\}$,
and denote by 
$\DISFXY(\H) = \DISF(\H) \times \Y$.
The following lemma then allows us to replace calculations in terms of $\F_j$ and $\PXY$
with calculations in terms of $\F(\InvTransform(2^{1-j});\zo)$ and $\PXYR{\DIS(\F_j)}$.
Its proof is included in 
Appendix~\ref{app:a2}.

\begin{lemma}
\label{lem:ophi-conditional}
Let $\spec{\phi}_{\loss}$ be any function satisfying Definition~\ref{defn:abstract-ophi}.
Let $P$ be any distribution over $\X \times \Y$.
For any
$\sigma \geq 0$, $\H \subseteq \Bracket{\F}$, 
$m \in \nats$,
if $P\left(\DISFXY(\H)\right) > 0$, define
\begin{multline}
\label{eqn:ophi-conditional}
\spec{\phi}_{\loss}^{\prime}(\sigma,\H;m,P) =
\\ 32 \left(\inf_{\substack{\region = \region^{\prime} \times \Y : \\ \region^{\prime} \supseteq \DISF(\H)}} P(\region) \spec{\phi}_{\loss}\!\left(\frac{\sigma}{\sqrt{P(\region)}}, \H; \lceil (1/2)P(\region)m \rceil, P_{\region}\right) + \frac{\maxloss}{m} + \sigma\sqrt{\frac{1}{m}}\right),
\end{multline}
and otherwise 
$\spec{\phi}_{\loss}^{\prime}(\sigma,\H;m,P) = 0$.
Then 
$\spec{\phi}_{\loss}^{\prime}$ also satisfies Definition~\ref{defn:abstract-ophi}.
\end{lemma}

Plugging this $\spec{\phi}_{\loss}^{\prime}$ function into Theorem~\ref{thm:abstract-active}
immediately yields the following corollary; the proof is included in 
Appendix~\ref{app:a2}.

\begin{corollary}
\label{cor:abstract-conditionals}
Fix any function $\hat{\sfun} : \nats \to [1,\infty)$.
Let $j_{\loss} = -\lceil \log_{2}(\maxloss) \rceil$,
define $u_{j_{\loss}-2} = u_{j_{\loss}-1} = 1$,
and for each integer $j \geq j_{\loss}$,
let $\F_{j}$ and $\region_{j}$ be as in Theorem~\ref{thm:abstract-active},
and if $\Px(\region_{j}) > 0$,
suppose $u_{j} \in \nats$ satisfies $\log_{2}(u_{j}) \in \nats$ and
\begin{equation}
\label{eqn:uj-conditional}
u_j \geq 4 \Px(\region_{j})^{-1} \spec{\SC}_{\loss}\left( \frac{2^{-j-7}}{\Px(\region_j)}, \frac{2^{2-j}}{\Px(\region_j)}; \F_{j}, \PXYR{\region_j}, \hat{\sfun}(u_j)\right) \lor u_{j-1} \lor 2 u_{j-2}.
\end{equation}
If $\Px(\region_{j}) = 0$, let $u_{j} \in \nats$ satisfy $\log_{2}(u_{j}) \in \nats$ and
$u_{j} \geq \tilde{K} \maxloss \hat{\sfun}(u_{j}) 2^{j+2} \lor u_{j} \lor 2u_{j-2}$.
Suppose $\target \in \F$.  For any $\eps \in (0,1)$, $s \in [1,\infty)$, letting $j_{\eps} = \lceil \log_{2}(1/\Transform(\eps)) \rceil$, if
\begin{align*}
& u \geq u_{j_{\eps}}
&& \text{and } &&
n \geq s + 2e \sum_{j=j_{\loss}}^{j_{\eps}} \Px(\region_{j}) u_j,
\end{align*}
then, with arguments $\loss$, $u$, and $n$, \ACAL~uses at most $u$ unlabeled samples, requests at most $n$ labels,
and with probability at least
$1-2^{-s} - \sum_{i=1}^{\log_{2}(u_{j_{\eps}})} 6 e^{-\hat{\sfun}( 2^{i} )}$,
returns a function $\hat{h}$ with $\er(\hat{h}) - \er(\target) \leq \eps$.
\end{corollary}

\longversion{It is worth noting that \ACAL~can be modified in a variety of interesting ways,
leading to related methods that can be analyzed analogously.
One simple modification is to use a more involved bound
to define the quantity $\hat{T}_{\loss}$.
For instance, for $Q$ as above,
and a function $\hat{\sfun} : (0,\infty) \times \ints \times \nats \to [1,\infty)$, one could define
\begin{align}
\hat{T}_{\loss}(\H ;Q, & m) = \notag
 (3/2)q^{-1} \inf\Big\{ \lambda > 0 : \forall k \in \ints_{\lambda},
 \\ & \hat{U}_{\loss}\left( \H\left(3 q^{-1} 2^{k-1}; \loss, S[Q]\right) ; Q,
\hat{\sfun}(\lambda, k, m)
\right) \leq 2^{k-4} q^{-1} \Big\}, \notag 
\end{align}
for which one can also prove a result similar to
Lemma~\ref{lem:koltchinskii} \citep*[see][]{koltchinskii:06,gine:06}.
This definition shares the convenient dual-interpretations
property mentioned above about $\hat{U}_{\loss}(\H;Q,\hat{\sfun}(m))$;
furthermore, results analogous to those above for \ACAL~also hold under this
definition (under mild restrictions on the allowed $\hat{\sfun}$ functions), 
with only a few modifications to constants and event probabilities
(e.g., summing over the $k \in \ints_{\lambda}$ argument to $\hat{\sfun}$ in the probability, 
while setting the $\lambda$ argument to $2^{-j}$ for the largest $j$ with $u_{j} \leq 2^{i}$).  

The update trigger in Step 5 can also be modified in several ways, leading to interesting related methods.
One possibility is that, if we have updated the $V$ set $k-1$ times already, and the previous update 
occurred at $m=m_{k-1}$, at which point $V=V_{k-1}$, $Q=Q_{k-1}$ (before the update),
then we could choose to update $V$ a $k^{{\rm th}}$ time when $\log_{2}(m-m_{k-1}) \in \nats$ and 
$\hat{U}_{\loss}(V;Q,\hat{\sfun}(\hat{\gamma}_{k-1},m-m_{k-1}))\frac{|Q|\lor 1}{m-m_{k-1}} \leq \hat{\gamma}_{k-1}/2$,
for some function $\hat{\sfun} : (0,\infty) \times \nats \to [1,\infty)$,
where $\hat{\gamma}_{k-1}$ is inductively defined as 
$\hat{\gamma}_{k-1} = \hat{U}_{\loss}(V_{k-1};Q_{k-1},\hat{\sfun}(\hat{\gamma}_{k-2},m_{k-1}-m_{k-2})) \frac{|Q_{k-1}|\lor 1}{m_{k-1}-m_{k-2}}$
(and $\hat{\gamma}_{0} = \maxloss$), and we would then use 
$\hat{U}_{\loss}(V;Q,\hat{\sfun}(\hat{\gamma}_{k-1},m-m_{k-1}))$ for the $\hat{T}_{\loss}$ value in the update;
in other words, we could update $V$ when the value of the concentration inequality used in the update has been reduced by a factor of $2$.
This modification leads to results quite similar to those stated above (under mild restrictions on the allowed $\hat{\sfun}$ functions), 
with only a change to the probability (namely, summing the exponential failure probabilities $e^{-\hat{\sfun}(2^{-j},2^{i})}$
over values of $j$ between $j_{\loss}$ and $j_{\eps}$, and values of $i$ between $1$ and $\log_{2}(u_{j})$);
additionally, with this modification, because we check for $\log_{2}(m-m_{k-1}) \in \nats$ rather than $\log_{2}(m) \in \nats$,
one can remove the ``$\lor u_{j-1} \lor 2 u_{j-2}$'' term in \eqref{eqn:uj} and \eqref{eqn:uj-conditional}
(though this has no effect for the applications below).
Another interesting possibility in this vein is to update when 
$\log_{2}(m-m_{k-1}) \in \nats$ and $\hat{U}_{\loss}(V;Q,\hat{\sfun}(\Transform(2^{-k}),m-m_{k-1}))\frac{|Q|\lor 1}{m-m_{k-1}} < \Transform(2^{-k})$.
Of course, the value $\Transform(2^{-k})$ is typically not directly available to us, but
we could substitute a distribution-independent lower bound on $\Transform(2^{-k})$,
for instance based on the $\transform$ function of \citet*{bartlett:06}; in the active learning
context, we could potentially use unlabeled samples to estimate a $\Px$-dependent lower bound
on $\Transform(2^{-k})$, or even $\diam(V) \transform(2^{-k} / 2 \diam(V))$,
based on 
the $\BJM^{\prime}$ function discussed above,
where $\diam(V) = \sup_{h,g \in V} \dist(h,g)$.}

\section{Derivations of the Explicit Results}
\label{sec:derivations}

We are now ready to present derivations of the explicit results of Section~\ref{sec:explicit},
based on the general results of the previous section.
To simplify the presentation, we often omit numerical constant factors in the
inequalities below, and for this we use the common notation $f(x) \lesssim g(x)$ to mean
that $f(x) \leq c g(x)$ for some implicit numerical constant $c \in (0,\infty)$.

\subsection{Specification of $\spec{\phi}_{\loss}$}
\label{subsec:phi-bounds}

We begin by recalling a few well-known bounds on the $\phi_{\loss}$ function, which lead to a more concrete instance of a function $\spec{\phi}_{\loss}$ satisfying Definition~\ref{defn:abstract-ophi}.

{\vskip 3mm}\noindent\textit{Uniform Entropy}:
The first bound is based on the work of \citet*{van-der-Vaart:11}; 
related bounds have been studied by \citet*{van-der-Vaart:96,gine:03,gine:06}, and others.
For $\sigma \geq 0$ and $\Env \in \FunctionsXY$, define the function
\begin{equation*}
J(\sigma,\G, \Env) = \sup_{\Pi} \int_{0}^{\sigma} \sqrt{1 + \ln \covering( \eps \|\Env\|_{\Pi},\G,L_{2}(\Pi))} {\rm d}\eps,
\end{equation*}
where $\Pi$ ranges over all finitely discrete probability measures.

Fix any distribution $P$ on $\X \times \Y$.
Since $J(\sigma, \G_{\H}, \Env) = J(\sigma, \G_{\H,P}, \Env)$,
it follows from Theorem 2.1 of \citet*{van-der-Vaart:11} (and a triangle inequality)
that for some universal constant $c \in [1,\infty)$, 
for any $m \in \nats$, $\Env \geq \Env(\G_{\H,P})$, and $\sigma \geq \D_{\loss}(\H;P)$,
\begin{align}
\phi_{\loss} & (\H;m,P) \leq \label{eqn:vw-uniform}
\\ & c  J\left(\frac{\sigma}{\|\Env\|_{P}},\G_{\H},  \Env\right) \|\Env\|_{P} \left(\frac{1}{\sqrt{m}} + \frac{ J\left(\frac{\sigma}{\|\Env\|_{P}},\G_{\H},  \Env\right) \|\Env\|_{P} \maxloss}{\sigma^2 m}\right). \notag
\end{align}
Based on \eqref{eqn:vw-uniform}, it is straightforward to 
define a function $\spec{\phi}_{\loss}$ 
that satisfies 
Definition~\ref{defn:abstract-ophi}.
Specifically, define
\begin{multline}
\label{eqn:uniform-ophi}
 \spec{\phi}_{\loss}^{(1)}(\sigma,\H;m,P) =
\\ \inf_{\Env \geq \Env(\G_{\H,P})} \inf_{\lambda \geq \sigma} c J\left(\frac{\lambda}{\|\Env\|_{P}},\G_{\H}, \Env\right) \|\Env\|_{P} \left(\frac{1}{\sqrt{m}} + \frac{ J\left(\frac{\lambda}{\|\Env\|_{P}},\G_{\H}, \Env\right) \|\Env\|_{P} \maxloss}{\lambda^2 m}\right),
\end{multline}
for $c$ as in \eqref{eqn:vw-uniform}.
By \eqref{eqn:vw-uniform}, $\spec{\phi}_{\loss}^{(1)}$ satisfies \eqref{eqn:ophi-2}.
Also note that $m \mapsto \spec{\phi}_{\loss}^{(1)}(\sigma,\H;m,P)$ is nonincreasing,
while $\sigma \mapsto \spec{\phi}_{\loss}^{(1)}(\sigma,\H;m,P)$ is nondecreasing.
Furthermore,
$\H \mapsto \covering(\eps, \G_{\H}, L_{2}(\Pi))$ is nondecreasing for all $\Pi$,
so that $\H \mapsto J(\sigma, \G_{\H}, \Env)$ is nondecreasing as well;
since $\H \mapsto \Env(\G_{\H,P})$ is also nondecreasing, we see that
$\H \mapsto \spec{\phi}_{\loss}^{(1)}(\sigma,\H;m,P)$ is nondecreasing.
Similarly, for $\region \subseteq \X$,
$\covering(\eps, \G_{\H_{\region,\target_{P}}}, L_{2}(\Pi))$ $\leq \covering(\eps, \G_{\H}, L_{2}(\Pi))$ for all $\Pi$,
so that $J(\sigma, \G_{\H_{\region,\target_{P}}}, \Env) \leq J(\sigma, \G_{\H}, \Env)$.
Since $\Env(\G_{\H_{\region,\target_{P}},P})$ $\leq \Env(\G_{\H,P})$,
we have
$\spec{\phi}_{\loss}^{(1)}(\sigma, \H_{\region,\target_{P}};m,P) \leq \spec{\phi}_{\loss}^{(1)}(\sigma, \H;m,P)$ as well.
Thus, to satisfy Definition~\ref{defn:abstract-ophi}, it suffices to take $\spec{\phi}_{\loss} = \spec{\phi}_{\loss}^{(1)}$.

{\vskip 3mm}\noindent\textit{Bracketing Entropy}:
Our second bound is a classic result in empirical process theory. 
For $\sigma \geq 0$, define the function
\begin{equation*}
J_{[]}(\sigma,\G, P) = \int_{0}^{\sigma} \sqrt{1 + \ln \covering_{[]}(\eps, \G, L_{2}(P))} {\rm d}\eps.
\end{equation*}
Fix any $\H \subseteq \Bracket{\F}$, and let $\G_{\H}$ and $\G_{\H,P}$ be as above.
Then since $J_{[]}(\sigma,\G_{\H},P) = J_{[]}(\sigma,\G_{\H,P},P)$,
Lemma 3.4.2 of \citep*{van-der-Vaart:96} and a triangle inequality imply that
for some universal constant $c \in [1,\infty)$, for any $m \in \nats$ and $\sigma \geq \D_{\loss}(\H;P)$,
\begin{equation}
\label{eqn:vw-bracketing}
\phi_{\loss}(\H;m,P) \leq c J_{[]}\left(\sigma, \G_{\H}, P\right) \left( \frac{1}{\sqrt{m}} + \frac{J_{[]}\left(\sigma,\G_{\H},P\right) \maxloss}{\sigma^{2} m}\right).
\end{equation}
As-is, the right side of \eqref{eqn:vw-bracketing} nearly satisfies Definition~\ref{defn:abstract-ophi} already.
Only a small 
change
is needed 
for the requirement of monotonicity in $\sigma$.
Specifically, define
\begin{equation}
\label{eqn:bracketing-ophi}
\spec{\phi}_{\loss}^{(2)}(\sigma,\H;m,P) =
\inf_{\lambda \geq \sigma} c J_{[]}\left(\lambda, \G_{\H}, P\right) \left( \frac{1}{\sqrt{m}} + \frac{J_{[]}\left(\lambda,\G_{\H},P\right) \maxloss}{\lambda^{2} m}\right),
\end{equation}
for $c$ as in \eqref{eqn:vw-bracketing}.
Then taking $\spec{\phi}_{\loss} = \spec{\phi}_{\loss}^{(2)}$ suffices to satisfy Definition~\ref{defn:abstract-ophi}.

{\vskip 3mm}
Since Definition~\ref{defn:abstract-ophi} is satisfied for both $\spec{\phi}_{\loss}^{(1)}$ and $\spec{\phi}_{\loss}^{(2)}$,
it is also satisfied for
$\spec{\phi}_{\loss} = \min\left\{ \spec{\phi}_{\loss}^{(1)}, \spec{\phi}_{\loss}^{(2)}\right\}$.  
The remainder of this section 
takes this as the specification of the $\spec{\phi}_{\loss}$ function.

\subsection{VC Subgraph Classes}
\label{subsec:vc-derivation}

The following is a 
classic 
result for VC subgraph classes \citep*[see e.g.,][]{van-der-Vaart:96},
derived from the works of \citet*{pollard:84} and \citet*{haussler:92}.
\begin{lemma}
\label{lem:vc-subgraph-covering}
For any $\G \subseteq \FunctionsXY$,
for any measurable $\Env \geq \Env(\G)$,
for any distribution $\Pi$ such that $\|\Env\|_{\Pi} > 0$,
for any $\eps \in (0,1)$,
\begin{equation*}
\covering(\eps \|\Env\|_{\Pi}, \G, L_{2}(\Pi)) \leq A(\G) \left(\frac{1}{\eps}\right)^{2\vc(\G)},
\end{equation*}
where $A(\G) \lesssim (\vc(\G) +1)(16 e)^{\vc(\G)}$.
\end{lemma}

In particular, Lemma~\ref{lem:vc-subgraph-covering} implies that any $\G \subseteq \FunctionsXY$ has,
$\forall \sigma \in (0,1]$,
\begin{equation}
\label{eqn:vc-subgraph-J-bound-bound}
J(\sigma,\G,\Env) 
\leq \int_{0}^{\sigma} \!\!\sqrt{\ln(eA(\G)) + 2\vc(\G) \ln(1/\eps)} {\rm d}\eps
\lesssim \sigma \sqrt{\vc(\G) \Log(1/\sigma)}.
\end{equation}
Applying these observations to 
$J(\sigma,\G_{\H,P},\Env)$ for $\H \subseteq \Bracket{\F}$ and $\Env \geq \Env(\G_{\H,P})$,
noting $J(\sigma,\G_{\H},\Env) = J(\sigma,\G_{\H,P},\Env)$ and $\vc(\G_{\H,P}) = \vc(\G_{\H})$,
and plugging the resulting bound into \eqref{eqn:uniform-ophi}
yields the following well-known bound on $\spec{\phi}_{\loss}^{(1)}$ due to \citet*{gine:06}.
For any $m \in \nats$ and $\sigma > 0$,
\begin{multline}
\label{eqn:vc-subgraph-phi-bound}
\spec{\phi}_{\loss}^{(1)}(\sigma, \H; m, P) 
\\ \lesssim \inf_{\lambda \geq \sigma} \lambda \sqrt{\frac{\vc(\G_{\H}) \Log\left(\frac{\|\Env(\G_{\H,P})\|_{P}}{\lambda}\right)}{m}} + \frac{\vc(\G_{\H}) \maxloss \Log\left(\frac{\|\Env(\G_{\H,P})\|_{P}}{\lambda}\right)}{m}.
\end{multline}
Specifically, to arrive at \eqref{eqn:vc-subgraph-phi-bound}, we relaxed the $\inf_{\Env \geq \Env(\G_{\H,P})}$
in \eqref{eqn:uniform-ophi} by taking $\Env \geq \Env(\G_{\H,P})$ such that $\|\Env\|_{P} = \max\{ \sigma, \|\Env(\G_{\H,P})\|_{P} \}$,
thus maintaining $\lambda / \|\Env\|_{P} \in (0,1]$ for the minimizing $\lambda$ value, so that \eqref{eqn:vc-subgraph-J-bound-bound} remains valid;
we also used the fact that $\Log \geq 1$, which gives us $\Log(\|\Env\|_{P} / \lambda) = \Log(\|\Env(\G_{\H,P})\|_{P} / \lambda)$ for this case.

In particular, \eqref{eqn:vc-subgraph-phi-bound} implies
\begin{multline}
\label{eqn:vc-subgraph-pre-phiinv}
\phiinv{\SC}_{\loss}(\gamma_1,\gamma_2;\H,P)
\\ \lesssim \inf_{\sigma \geq \D_{\loss}(\Bracket{\H}(\gamma_{2};\loss,P);P)} \left(\frac{\sigma^{2}}{\gamma_{1}^{2}} + \frac{\maxloss}{\gamma_{1}}\right) \vc(\G_{\H}) \Log\left( \frac{\| \Env(\G_{\H,P}) \|_{P}}{\sigma}\right).
\end{multline}
For $\lambda > 0$, when $\target_{P} \in \H$ and $P$ satisfies Condition~\ref{con:tsybakov-sur}, \eqref{eqn:vc-subgraph-pre-phiinv} implies that,
\begin{multline}
\label{eqn:vc-subgraph-phiinv}
\sup_{\gamma \geq \lambda} \phiinv{\SC}_{\loss}(\gamma/(4 \tilde{K}),\gamma; \H(\gamma;\loss,P), P)
\\ \lesssim \left(\frac{\tsybcb}{\lambda^{2-\tsybb}} + \frac{\maxloss}{\lambda}\right) \vc(\G_{\H}) \Log\left( \capacity_{\loss}\left( \tsybcb \lambda^{\tsybb} ; \H, P \right) \right).
\end{multline}

Combining this observation with
\eqref{eqn:oU-vs-tildeU}, \eqref{eqn:spec-split-bound}, \eqref{eqn:restinv-bound}, \eqref{eqn:sc-inequalities}, and Theorem~\ref{thm:erm},
we arrive at a result for the sample complexity of
empirical $\loss$-risk minimization with a general VC subgraph class under
Conditions~\ref{con:tsybakov-01} and \ref{con:tsybakov-sur}.
Specifically, for $\sfun \!:\! (0,\infty)^{2} \to [1,\infty)$,
when $\target \in \F$, 
\eqref{eqn:oU-vs-tildeU} implies that
\begin{align}
 \bar{\SC}_{\loss}(\Transform(\eps);\F,\PXY,\sfun)
& \leq \tilde{\SC}_{\loss}(\Transform(\eps); \F, \PXY, \sfun) \notag
\\ & = \sup_{\gamma \geq \Transform(\eps)} \tilde{\SC}_{\loss}(\gamma/2,\gamma; \F(\gamma;\loss), \PXY, \sfun(\Transform(\eps),\gamma)) \notag
\\ &\leq \sup_{\gamma \geq \Transform(\eps)} \spec{\SC}_{\loss}(\gamma/2,\gamma; \F(\gamma;\loss), \PXY, \sfun(\Transform(\eps),\gamma)). \label{eqn:spec-sc-bound-on-bar-sc}
\end{align}
For $\PXY$ satisfying Conditions \ref{con:tsybakov-01} and \ref{con:tsybakov-sur},
applying \eqref{eqn:spec-split-bound}, \eqref{eqn:restinv-bound}, and \eqref{eqn:vc-subgraph-phiinv} to \eqref{eqn:spec-sc-bound-on-bar-sc},
and taking $\sfun(\lambda,\gamma) = \Log\left(\frac{12 \gamma}{\lambda \conf}\right)$,
we arrive at Theorem~\ref{thm:vc-erm} (which is implicit in 
\citep*{gine:06}).

Next, we turn to 
Theorem~\ref{thm:vc-subgraph-abstract}.
Note that 
$\vc(\G_{\F_{j}}) \leq \vc(\G_{\F(\InvTransform(2^{2-j});\zo)}) \leq \vc(\G_{\F})$.
Also, $\| \Env(\G_{\F_{j},\PXY}) \|_{\PXY}^{2} \leq \maxloss^{2} \Px\left(\DIS\left( \F\left( \InvTransform\left(2^{2-j}\right); \zo\right)\right)\right)$.
Thus, for $j_{\loss} \leq j \leq \lceil \log_{2}(1/\BJM(\eps)) \rceil$,
\eqref{eqn:vc-subgraph-pre-phiinv} implies
\begin{equation}
\label{eqn:vc-subgraph-mixed}
\phiinv{\SC}_{\loss}(2^{-j-2} \tilde{K}^{-1}, 2^{2-j};\F_{j},\PXY) \lesssim
 \left(\tsybcb 2^{j(2-\tsybb)} + \maxloss 2^{j}\right) \vc(\G_{\F}) \Log\left( \mixcap\left(\BJM(\eps)\right) \maxloss\right).
\end{equation}

With a little additional work to define an appropriate $\hat{\sfun}$ function
and derive closed-form bounds on the summation in Theorem~\ref{thm:abstract-active},
we arrive at Theorem~\ref{thm:vc-subgraph-abstract}.
The remaining 
details 
appear in 
Appendix~\ref{app:applications}.

When $\loss$ satisfies Condition~\ref{con:strong-convexity},
we can derive the sometimes-stronger result in Theorem~\ref{thm:vc-subgraph-active-strong} via Corollary~\ref{cor:abstract-conditionals}.
Specifically, combining \eqref{eqn:vc-subgraph-pre-phiinv}, \eqref{eqn:spec-split-bound}, \eqref{eqn:restinv-bound},
and Lemma~\ref{lem:bjm-strong-convexity},
we have that if $\target \in \F$ and Condition~\ref{con:strong-convexity} is satisfied,
then for $j \geq j_{\loss}$
in Corollary~\ref{cor:abstract-conditionals},
\begin{align}
& \spec{\SC}_{\loss}\left( \frac{2^{-j-7}}{\Px(\region_{j})}, \frac{2^{2-j}}{\Px(\region_{j})}; \F_{j}, \PXYR{\region_{j}}, s\right) \label{eqn:vc-subgraph-strong-conditional}
\\ & \lesssim  \left( \tsybcb \left(2^{j} \Px(\region_{j})\right)^{2-\tsybb} + 2^{j} \maxloss \Px(\region_{j})\right)\left( \vc(\G_{\F}) \Log\left(\maxloss^{2} 2^{j \tsybb}\Px(\region_{j})^{\tsybb} / \tsybcb\right)
+ s\right), \notag
\end{align}
where $\tsybcb$ and $\tsybb$ are as in Lemma~\ref{lem:bjm-strong-convexity}.
Plugging this into Corollary~\ref{cor:abstract-conditionals}, 
we arrive at Theorem~\ref{thm:vc-subgraph-active-strong};
the remaining details proceed
similarly to those of Theorem~\ref{thm:vc-subgraph-abstract}, 
and a detailed sketch appears in 
Appendix~\ref{app:applications}.

\subsection{Entropy Conditions}
\label{subsec:entropy-derivation}

Next we turn to problems satisfying entropy conditions.
Note that when $\F$ satisfies Condition~\ref{con:entropy}, for $0 \leq \sigma \leq 2\|\Env\|_{\PXY}$,
\begin{equation}
\label{eqn:ent-phi-bound}
\spec{\phi}_{\loss}(\sigma,\F;m,\PXY) \lesssim
\max\left\{\frac{\sqrt{\entc} \|\Env\|_{\PXY}^{\entrho} \sigma^{1-\entrho}}{(1-\entrho) m^{1/2}}, \frac{\maxloss^{\frac{1-\entrho}{1+\entrho}} \entc^{\frac{1}{1+\entrho}} \|\Env\|_{\PXY}^{\frac{2\entrho}{1+\entrho}}}{(1-\entrho)^{\frac{2}{1+\entrho}} m^{\frac{1}{1+\entrho}}}\right\}.
\end{equation}
Since $\D_{\loss}(\Bracket{\F}) \leq 2 \|\Env\|_{\PXY}$, this implies that for any numerical constant $c \in (0,1]$,
for every $\gamma \in (0,\infty)$, if $\PXY$ satisfies Condition~\ref{con:tsybakov-sur}, then
\begin{equation}
\label{eqn:ent-phiinv-tsybakov}
\phiinv{\SC}_{\loss}(c \gamma, \gamma;\F,\PXY) \lesssim
\frac{\entc \|\Env\|_{\PXY}^{2\entrho}}{(1-\entrho)^{2}} \max\left\{ \tsybcb^{1-\entrho} \gamma^{\tsybb (1-\entrho) - 2} , \maxloss^{1-\entrho} \gamma^{-(1+\entrho)}\right\}.
\end{equation}
Combined with \eqref{eqn:spec-split-bound}, \eqref{eqn:restinv-bound}, \eqref{eqn:sc-inequalities}, and Theorem~\ref{thm:erm},
taking $\sfun(\lambda,\gamma) = \Log\left(\frac{12 \gamma}{\lambda\conf}\right)$,
we arrive at the classic result in Theorem~\ref{thm:ent-erm} \citep*[e.g.,][]{bartlett:06,van-der-Vaart:96}.

The corresponding result for \ACAL, namely Theorem~\ref{thm:ent-active}, 
follows by combining \eqref{eqn:ent-phiinv-tsybakov} with \eqref{eqn:spec-split-bound}, \eqref{eqn:restinv-bound}, and Theorem~\ref{thm:abstract-active}.
The details of the proof follow analogously to that of Theorem~\ref{thm:vc-subgraph-abstract}, and are therefore omitted for brevity.

Next, we turn to deriving the corresponding results stated above under Condition~\ref{con:strong-convexity}.
As discussed above, we treat separately 
the cases
of \eqref{eqn:uniform-entropy-bound} and \eqref{eqn:bracketing-entropy-bound}. 

First, suppose \eqref{eqn:uniform-entropy-bound} holds (for all $P$, $\eps$) with $\Env \leq \maxloss$.
Following the derivation of \eqref{eqn:ent-phiinv-tsybakov} above,
combined with \eqref{eqn:restinv-bound}, \eqref{eqn:spec-split-bound}, and Lemma~\ref{lem:bjm-strong-convexity},
for $j \geq j_{\loss}$
in Corollary~\ref{cor:abstract-conditionals},
\begin{multline*}
 \spec{\SC}_{\loss}\left(\frac{2^{-j-7}}{\Px(\region_{j})},  \frac{2^{2-j}}{\Px(\region_{j})}; \F_{j}, \PXYR{\region_{j}}, s\right)
\lesssim  \left( \tsybcb \left(2^{j} \Px(\region_{j})\right)^{2-\tsybb}
+ \maxloss 2^{j} \Px(\region_{j})\right) s
\\ + \frac{\entc \maxloss^{2\entrho}}{(1-\entrho)^{2}}
\left(\tsybcb^{1-\entrho} \left(2^{j} \Px(\region_{j})\right)^{2-\tsybb(1-\entrho)} +
\maxloss^{1-\entrho} \left(2^{j} \Px(\region_{j})\right)^{1+\entrho}\right),
\end{multline*}
where $\tsybcb$ and $\tsybb$ are from Lemma~\ref{lem:bjm-strong-convexity}.
This immediately leads to Theorem~\ref{thm:uniform-entropy-strong-convexity} by
reasoning analogous to the proof of Theorem~\ref{thm:vc-subgraph-active-strong}.

The case 
\eqref{eqn:bracketing-entropy-bound} 
can be treated similarly, though the result we obtain 
(Theorem~\ref{thm:bracketing-entropy-strong-convexity}) is slightly weaker.
Suppose \eqref{eqn:bracketing-entropy-bound} 
is satisfied 
with $\Env = \maxloss$ constant.
In this case, 
$\maxloss \geq \Env(\G_{\F_{j},\PXYR{\region_{j}}})$, 
while
$\covering_{[]}(\eps \maxloss, \G_{\F_{j}}, L_{2}(\PXYR{\region_{j}})) = \covering_{[]}(\eps \maxloss \sqrt{\Px(\region_{j})}, \G_{\F_{j}}, L_{2}(\PXY)) 
\leq$\break $\covering_{[]}(\eps \maxloss \sqrt{\Px(\region_{j})}, \G_{\F}, L_{2}(\PXY))$,
so that $\F_{j}$ and 
$\PXYR{\region_{j}}$ also satisfy \eqref{eqn:bracketing-entropy-bound} with $\Env = \maxloss$:
\begin{equation*}
\ln \covering_{[]}\left( \eps \maxloss, \G_{\F_{j}}, L_{2}(\PXYR{\region_{j}})\right) \leq \entc \Px(\region_{j})^{-\entrho} \eps^{-2\entrho}.
\end{equation*}
Thus, based on \eqref{eqn:ent-phiinv-tsybakov},
\eqref{eqn:spec-split-bound}, \eqref{eqn:restinv-bound},
and Lemma~\ref{lem:bjm-strong-convexity},
we have that if $\target \in \F$ and Condition~\ref{con:strong-convexity} is satisfied,
then for $j \geq j_{\loss}$ in Corollary~\ref{cor:abstract-conditionals},
\begin{multline*}
\spec{\SC}_{\loss}\left( \frac{2^{-j-7}}{\Px(\region_{j})}, \frac{2^{2-j}}{\Px(\region_{j})}; \F_{j}, \PXYR{\region_{j}}, s\right)
\lesssim \left(\tsybcb \left(2^{j} \Px(\region_{j})\right)^{2-\tsybb} + \maxloss 2^{j} \Px(\region_{j}) \right) s
\\ + \left(\frac{\entc \maxloss^{2\entrho}}{(1-\entrho)^{2}}\right)
\Px(\region_{j})^{-\entrho}\left( \tsybcb^{1-\entrho} \left(2^{j} \Px(\region_{j})\right)^{2-\tsybb(1-\entrho)} + \maxloss^{1-\entrho} \left(2^{j} \Px(\region_{j})\right)^{1+\entrho} \right),
\end{multline*}
where $\tsybcb$ and $\tsybb$ are as in Lemma~\ref{lem:bjm-strong-convexity}.
Combining this with Corollary~\ref{cor:abstract-conditionals}
and reasoning analogously to the proof of Theorem~\ref{thm:vc-subgraph-active-strong},
we obtain Theorem~\ref{thm:bracketing-entropy-strong-convexity}.

\bibliographystyle{abbrvnat}
\bibliography{learning}                

\appendix

\section{Main Proofs}
\label{app:a2}

This appendix includes the proofs of the main abstract results from Section~\ref{sec:abstract}.

\begin{proof}[Proof of Theorem~\ref{thm:abstract-active}]
Fix any $\eps \in (0,1)$, $s \in [1,\infty)$, values $u_{j}$ satisfying \eqref{eqn:uj},
and consider running \ACAL~with values of $u$ and $n$ satisfying the conditions 
specified in Theorem~\ref{thm:abstract-active}.
The proof has two main components:
first, showing that, with high probability, $\target \in V$ is maintained as an invariant,
and second, showing that, with high probability, the set $V$ will be sufficiently reduced
to provide the guarantee on $\hat{h}$ after at most the stated number of label requests, given
the value of $u$ is as large as stated.
Both of these components are served by the following application of
Lemma~\ref{lem:koltchinskii}.

Let $S$ denote the set of values of $m$ obtained in \ACAL~for which $\log_{2}(m) \in \nats$.
For each $m \in S$,
let $V^{(m)}$ and $Q_{m}$ denote the values of $V$ and $Q$ (respectively)
upon reaching Step 5 on the round that \ACAL~obtains that value of $m$,
and let $\tilde{V}^{(m)}$ denote the value of $V$ upon completing Step 6 on that round;
also denote $\DISV_{m} = \DIS(V^{(m)})$ and $\L_{m} = \{(1+m/2,Y_{1+m/2}),\ldots,(m,Y_{m})\}$,
and define $\tilde{V}^{(1)} = \F$ and $\DISV_{1} = \DIS(\F)$.

Consider any $m \in S$, and note that $\forall h,g \in V^{(m)}$,
\begin{multline}
\label{eqn:empirical-risk-swap-Q-m}
(|Q_m| \lor 1) \left(\Risk(h;Q_m) - \Risk(g;Q_m)\right) 
\\ = \frac{m}{2}\left(\Risk(h_{\DISV_{m}};\L_m) - \Risk(g_{\DISV_{m}};\L_m)\right),
\end{multline}
and furthermore that
\begin{equation}
\label{eqn:hatU-swap-Q-m}
(|Q_m|\lor 1)\hat{U}_{\loss}(V^{(m)};Q_m, \hat{\sfun}(m)) 
= \frac{m}{2} \hat{U}_{\loss}( V^{(m)}_{\DISV_{m}}; \L_m, \hat{\sfun}(m)).
\end{equation}
Applying Lemma~\ref{lem:koltchinskii} under the conditional distribution given $V^{(m)}$,
combined with the law of total probability,
we have that, for every $m \in \nats$ with $\log_{2}(m) \in \nats$,
on an event of probability at least $1- 6 e^{-\hat{\sfun}(m)}$,
if $\target \in V^{(m)}$ and $m \in S$,
then letting $\hat{U}_{m} = \hat{U}_{\loss}\left(V^{(m)}_{\DISV_{m}};\L_m,\hat{\sfun}(m)\right)$,
every $h_{\DISV_{m}} \in V^{(m)}_{\DISV_{m}}$ has
\begin{align}
& \Risk(h_{\DISV_{m}}) - \Risk(\target)  <  \Risk(h_{\DISV_{m}}; \L_m) - \Risk(\target;\L_m) + \hat{U}_{m}, \label{eqn:a2-kolt-1}
\\ & \Risk(h_{\DISV_{m}};\L_m) - \min_{g_{\DISV_{m}} \in V^{(m)}_{\DISV_{m}}} \Risk(g_{\DISV_{m}};\L_m) < \Risk(h_{\DISV_{m}}) - \Risk(\target) + \hat{U}_{m}, \label{eqn:a2-kolt-2}
\end{align}
and furthermore
\begin{equation}
\hat{U}_{m} < \tilde{U}_{\loss}\left(V^{(m)}_{\DISV_{m}};\PXY,m/2,\hat{\sfun}(m)\right). \label{eqn:a2-kolt-3}
\end{equation}
By a union bound, on an event of probability at least
$1 - \sum_{i = 1}^{\log_{2}(u_{j_{\eps}})} 6 e^{-\hat{\sfun}(2^{i})}$,
for every $m \in S$ with $m \leq u_{j_{\eps}}$ and $\target \in V^{(m)}$,
the inequalities \eqref{eqn:a2-kolt-1}, \eqref{eqn:a2-kolt-2}, and \eqref{eqn:a2-kolt-3} hold.
Call this event $E$. 

In particular, note that on the event $E$, for any $m \in S$ with $m \leq u_{j_{\eps}}$ and $\target \in V^{(m)}$,
since $\target_{\DISV_{m}} = \target$, \eqref{eqn:empirical-risk-swap-Q-m}, \eqref{eqn:a2-kolt-2}, and \eqref{eqn:hatU-swap-Q-m} imply
\begin{multline*}
(|Q_{m}| \lor 1) \left(\Risk(\target;Q_{m}) - \inf_{g \in V^{(m)}} \Risk(g;Q_{m})\right)
\\ = \frac{m}{2} \left( \Risk(\target;\L_{m}) - \inf_{g_{\DISV_{m}} \in V^{(m)}_{\DISV_{m}}} \Risk(g_{\DISV_{m}};Q_{m}) \right)
\\ < \frac{m}{2} \hat{U}_{m}
= (|Q_{m}| \lor 1) \hat{U}_{\loss}(V^{(m)};Q_{m},\hat{\sfun}(m)),
\end{multline*}
so that $\target \in \tilde{V}^{(m)}$ as well.  Since $\target \in V^{(2)}$, and every $m \in S$ with $m > 2$ has $V^{(m)} = \tilde{V}^{(m/2)}$,
by induction we have that, on the event $E$, every $m \in S$ with $m \leq u_{j_{\eps}}$ has $\target \in V^{(m)}$ and $\target \in \tilde{V}^{(m)}$;
this also implies that \eqref{eqn:a2-kolt-1}, \eqref{eqn:a2-kolt-2}, and \eqref{eqn:a2-kolt-3} all hold for these values of $m$ on the event $E$.

We next prove by induction that, on the event $E$, $\forall j \in \{j_{\loss}-2,j_{\loss}-1,j_{\loss},\ldots,j_{\eps}\}$, if $u_{j} \in S \cup \{1\}$,
then $\tilde{V}^{(u_{j})}_{\DISV_{u_{j}}} \subseteq \Bracket{\F}(2^{-j};\loss)$ and $\tilde{V}^{(u_{j})} \subseteq \F\left( \InvTransform(2^{-j}) ; \zo \right)$.
This claim is trivially satisfied for $j \in \{ j_{\loss}-2, j_{\loss}-1\}$, since in that case
$\Bracket{\F}(2^{-j};\loss) = \Bracket{\F} \supseteq \tilde{V}^{(u_{j})}_{\DISV_{u_{j}}}$ 
and $\F(\InvTransform(2^{-j});\zo) = \F$, so that these values can serve as our base case.
Now take as an inductive hypothesis that, for some $j \in \{j_{\loss},\ldots,j_{\eps}\}$,
if $u_{j-2} \in S \cup \{1\}$, then on the event $E$, 
$\tilde{V}^{(u_{j-2})}_{\DISV_{u_{j-2}}} \subseteq \Bracket{\F}(2^{2-j};\loss)$
and $\tilde{V}^{(u_{j-2})} \subseteq \F\left( \InvTransform(2^{2-j});\zo \right)$, 
and suppose the event $E$ occurs.
If $u_{j} \notin S$, the claim is trivially satisfied; otherwise, suppose $u_{j} \in S$, which further implies $u_{j-2} \in S \cup \{1\}$.
Since $u_{j} \leq u_{j_{\eps}}$, for any $h \in \tilde{V}^{(u_{j})}$, \eqref{eqn:a2-kolt-1} implies
\begin{equation*}
\frac{u_{j}}{2}\left(\Risk(h_{\DISV_{u_{j}}}) - \Risk(\target)\right)
< \frac{u_{j}}{2}\left(\Risk(h_{\DISV_{u_{j}}};\L_{u_{j}}) - \Risk(\target;\L_{u_{j}}) + \hat{U}_{u_{j}}\right).
\end{equation*}
Since we have already established that $\target \in V^{(u_{j})}$, \eqref{eqn:empirical-risk-swap-Q-m} and \eqref{eqn:hatU-swap-Q-m} imply
\begin{multline*}
\frac{u_{j}}{2}\left(\Risk(h_{\DISV_{u_{j}}};\L_{u_{j}}) - \Risk(\target;\L_{u_{j}}) + \hat{U}_{u_{j}}\right)
\\ = (|Q_{u_{j}}| \lor 1)\left( \Risk(h;Q_{u_{j}}) - \Risk(\target;Q_{u_{j}}) + \hat{U}_{\loss}(V^{(u_{j})};Q_{u_{j}},\hat{\sfun}(u_{j})) \right).
\end{multline*}
The definition of $\tilde{V}^{(u_{j})}$ from Step 6 implies
\begin{multline*}
(|Q_{u_{j}}| \lor 1)\left( \Risk(h;Q_{u_{j}}) - \Risk(\target;Q_{u_{j}}) + \hat{U}_{\loss}(V^{(u_{j})};Q_{u_{j}},\hat{\sfun}(u_{j})) \right)
\\ \leq (|Q_{u_{j}}| \lor 1)\left( 2\hat{U}_{\loss}(V^{(u_{j})};Q_{u_{j}},\hat{\sfun}(u_{j})) \right).
\end{multline*}
By \eqref{eqn:hatU-swap-Q-m} and \eqref{eqn:a2-kolt-3},
\begin{equation*}
(|Q_{u_{j}}| \!\lor\! 1)\!\left( 2\hat{U}_{\loss}(V^{(u_{j})};Q_{u_{j}},\hat{\sfun}(u_{j})) \right)
\!=\! u_{j} \hat{U}_{u_{j}}
\!<\! u_{j} \tilde{U}_{\loss}\!\left(V^{(u_{j})}_{\DISV_{u_{j}}};\PXY,u_{j}/2,\hat{\sfun}(u_{j})\right)\!.
\end{equation*}
Altogether, we have that, $\forall h \in \tilde{V}^{(u_{j})}$,
\begin{equation}
\label{eqn:abstract-risk-tilde-bound}
\Risk(h_{\DISV_{u_{j}}}) - \Risk(\target) < 2 \tilde{U}_{\loss}\left(V^{(u_{j})}_{\DISV_{u_{j}}};\PXY,u_{j}/2,\hat{\sfun}(u_{j})\right).
\end{equation}
By definition of $\spec{\SC}_{\loss}$, monotonicity of $m \mapsto \spec{U}_{\loss}(\cdot,\cdot;\cdot,m,\cdot)$, 
and the condition on $u_{j}$ in \eqref{eqn:uj}, we know that
\begin{equation*}
\spec{U}_{\loss}\left(\F_{j},2^{2-j};\PXY,u_{j}/2,\hat{\sfun}(u_{j})\right) \leq 2^{-j-1}.
\end{equation*}
The fact that $u_{j} \geq 2 u_{j-2}$, combined with the inductive hypothesis, implies
\begin{equation*}
V^{(u_{j})} \subseteq \tilde{V}^{(u_{j-2})} \subseteq \F\left( \InvTransform(2^{2-j}); \zo \right).
\end{equation*}
This also implies $\DISV_{u_{j}} \subseteq \DIS(\F(\InvTransform(2^{2-j});\zo))$.
Combined with \eqref{eqn:oUHsmall-vs-oUHbig}, these imply
\begin{equation*}
\spec{U}_{\loss}\left(V^{(u_{j})}_{\DISV_{u_{j}}},2^{2-j};\PXY,u_{j}/2,\hat{\sfun}(u_{j})\right) \leq 2^{-j-1}.
\end{equation*}
Together with \eqref{eqn:oU-vs-tildeU}, this implies
\begin{equation*}
\tilde{U}_{\loss}\left(V^{(u_{j})}_{\DISV_{u_{j}}}(2^{2-j};\loss);\PXY,u_{j}/2,\hat{\sfun}(u_{j})\right) \leq 2^{-j-1}.
\end{equation*}
The inductive hypothesis implies $V^{(u_{j})}_{\DISV_{u_{j}}} = V^{(u_{j})}_{\DISV_{u_{j}}}(2^{2-j};\loss)$, which means
\begin{equation*}
\tilde{U}_{\loss}\left(V^{(u_{j})}_{\DISV_{u_{j}}};\PXY,u_{j}/2,\hat{\sfun}(u_{j})\right) \leq 2^{-j-1}.
\end{equation*}
Plugging this into \eqref{eqn:abstract-risk-tilde-bound} implies, $\forall h \in \tilde{V}^{(u_{j})}$,
\begin{equation}
\label{eqn:abstract-risk-increment}
\Risk(h_{\DISV_{u_{j}}}) - \Risk(\target) < 2^{-j}.
\end{equation}
In particular, since $\target \in \F$, we always have $\tilde{V}^{(u_{j})}_{\DISV_{u_{j}}} \subseteq \Bracket{\F}$,
so that \eqref{eqn:abstract-risk-increment} establishes that $\tilde{V}^{(u_{j})}_{\DISV_{u_{j}}} \subseteq \Bracket{\F}(2^{-j};\loss)$.
Furthermore, since $\target \in V^{(u_{j})}$ on $E$, $\sign(h_{\DISV_{u_{j}}}) = \sign(h)$ for every $h \in \tilde{V}^{(u_{j})}$, 
so that every $h \in \tilde{V}^{(u_{j})}$ has $\er(h) = \er(h_{\DISV_{u_{j}}})$, and therefore (by definition of $\InvTransform(\cdot)$),
\eqref{eqn:abstract-risk-increment} implies
\begin{equation*}
\er(h) - \er(\target) = \er(h_{\DISV_{u_{j}}}) - \er(\target) \leq \InvTransform\left(2^{-j}\right).
\end{equation*}
This implies $\tilde{V}^{(u_{j})} \subseteq \F\left( \InvTransform(2^{-j}); \zo \right)$, 
which completes the inductive proof.
This implies that, on the event $E$, if $u_{j_{\eps}} \in S$, then (by monotonicity of $\InvTransform(\cdot)$ and the fact that $\InvTransform(\Transform(\eps)) \leq \eps$)
\begin{equation*}
\tilde{V}^{(u_{j_{\eps}})} \subseteq \F( \InvTransform(2^{-j_{\eps}}); \zo ) \subseteq \F( \InvTransform( \Transform(\eps) ); \zo ) \subseteq \F(\eps;\zo).
\end{equation*}
In particular, since the update in Step 6 always keeps at least one element in $V$, the function $\hat{h}$ in Step 8
exists, and has $\hat{h} \in \tilde{V}^{(u_{j_{\eps}})}$ (if $u_{j_{\eps}} \in S$).  Thus, on the event $E$, if $u_{j_{\eps}} \in S$, 
then $\er(\hat{h}) - \er(\target) \leq \eps$.  Therefore, since $u \geq u_{j_{\eps}}$, to complete the proof it suffices 
to show that taking $n$ of the size indicated in the theorem statement suffices to guarantee $u_{j_{\eps}} \in S$, 
on an event (which includes $E$) having at least the stated probability.

Note that for any $j \in \{j_{\loss}, \ldots, j_{\eps}\}$ with $u_{j-1} \in S \cup \{1\}$,
every $m \in \{u_{j-1} + 1,\ldots,u_{j}\} \cap S$ has $V^{(m)} \subseteq \tilde{V}^{(u_{j-1})}$;
furthermore, we showed above that on the event $E$, if $u_{j-1} \in S$, then 
$\tilde{V}^{(u_{j-1})} \subseteq \F( \InvTransform(2^{1-j}); \zo)$,
so that $\DIS(V^{(m)}) \subseteq \DIS(\tilde{V}^{(u_{j-1})}) \subseteq \DIS(\F(\InvTransform(2^{1-j});\zo)) \subseteq \region_{j}$. 
Thus, on the event $E$, to guarantee $u_{j_{\eps}} \in S$, it suffices to have
\begin{equation*}
n \geq \sum_{j= j_{\loss}}^{j_{\eps}} \sum_{m = u_{j-1}+1}^{u_{j}} \ind_{\region_{j}}(X_{m}).
\end{equation*}
Noting that this is a sum of independent Bernoulli random variables, 
a Chernoff bound implies that on an event $E^{\prime}$ of probability at least $1 - 2^{-s}$,
\begin{multline*}
\sum_{j= j_{\loss}}^{j_{\eps}} \sum_{m = u_{j-1}+1}^{u_{j}} \ind_{\region_{j}}(X_{m})
\leq s + 2 e \sum_{j= j_{\loss}}^{j_{\eps}} \sum_{m = u_{j-1}+1}^{u_{j}} \Px(\region_{j})
\\ = s + 2 e \sum_{j= j_{\loss}}^{j_{\eps}} \Px(\region_{j}) (u_{j} - u_{j-1})
\leq s + 2 e \sum_{j= j_{\loss}}^{j_{\eps}} \Px(\region_{j}) u_{j}.
\end{multline*}
Thus, for $n$ satisfying the condition in the theorem statement, on the event $E \cap E^{\prime}$,
we have $u_{j_{\eps}} \in S$, and therefore (as proven above) $\er(\hat{h}) - \er(\target) \leq \eps$.
Finally, a union bound implies that the event $E \cap E^{\prime}$ has probability at least
\begin{equation*}
1 - 2^{-s} - \sum_{i=1}^{\log_{2}(u_{j_{\eps}})} 6 e^{-\hat{\sfun}(2^{i})},
\end{equation*}
as required.
\end{proof}

\begin{proof}[Proof of Lemma~\ref{lem:ophi-conditional}]
If $P\left(\DISFXY(\H)\right) = 0$, then $\phi_{\loss}(\H;m,P) = 0$, so that in this case, $\spec{\phi}_{\loss}^{\prime}$  trivially satisfies \eqref{eqn:ophi-2}.
Otherwise, suppose $P\left(\DISFXY(\H)\right) > 0$.
By the classic symmetrization inequality \citep*[e.g.,][Lemma 2.3.1]{van-der-Vaart:96},
\begin{equation*}
\phi_{\loss}(\H;m,P)\leq 2 \E\left[ \left| \Rademacher_{\loss}(\H;S,\Xi_{[m]}) \right| \right],
\end{equation*}
where $S \sim P^{m}$ and $\Xi_{[m]} = \{\xi_1,\ldots,\xi_m\} \sim {\rm Uniform}(\{-1,+1\}^{m})$ are independent.
Fix any measurable $\region \supseteq \DISFXY(\H)$.
Then
\begin{equation}
\label{eqn:ophi-conditional-intersect}
\E\left[\left|\Rademacher_{\loss}(\H; S, \Xi_{[m]})\right|\right] = \E\left[\left| \Rademacher_{\loss}(\H; S \cap \region, \Xi_{[|S\cap\region|]}) \right| \frac{|S\cap\region|}{m}\right],
\end{equation}
where $\Xi_{[q]} = \{\xi_1,\ldots,\xi_{q}\}$ for any $q \in \{0,\ldots,m\}$.
By the classic desymmetrization inequality \citep*[see e.g.,][]{koltchinskii:08},
applied under the conditional distribution given $|S \cap \region|$,
the right hand side of \eqref{eqn:ophi-conditional-intersect} is at most
\begin{equation}
\label{eqn:conditional-desymmetrization}
\E\!\left[2 \phi_{\loss}(\H; |S \cap \region|, P_{\region}) \frac{|S\cap\region|}{m}\right] 
+ \sup_{h,g \in \H} \!\left|\Risk(h ; P_{\region}) - \Risk(g ; P_{\region})\right| \frac{\E\!\left[\!\sqrt{|S\cap\region|}\right]}{m}.
\end{equation}
By Jensen's inequality, the second term in \eqref{eqn:conditional-desymmetrization} is at most
\begin{equation*}
\sup_{h,g \in \H} \!\left|\Risk(h ; P_{\region}) - \Risk(g ; P_{\region})\right|\! \sqrt{\frac{P(\region)}{m}}
\leq \D_{\loss}(\H; P_{\region}) \sqrt{\frac{P(\region)}{m}}
= \D_{\loss}(\H; P) \sqrt{\frac{1}{m}}.
\end{equation*}
Decomposing based on $|S\cap\region|$, the first term in \eqref{eqn:conditional-desymmetrization} is at most
\begin{multline}
\label{eqn:ophi-conditional-decompose}
\E\left[ 2 \phi_{\loss}(\H; |S \cap \region|, P_{\region}) \frac{|S\cap\region|}{m} \ind\left[ |S\cap\region| \geq (1/2) P(\region) m\right]\right]
\\ + 2 \maxloss P(\region) \P\left(|S\cap\region| < (1/2)P(\region)m\right).
\end{multline}
Since $|S\cap\region| \geq (1/2) P(\region) m \Rightarrow |S\cap\region| \geq \lceil (1/2) P(\region) m \rceil$,
and $\phi_{\loss}(\H; q, P_{\region})$ is nonincreasing in $q$, 
the first term in \eqref{eqn:ophi-conditional-decompose} is at most
\begin{equation*}
2 \phi_{\loss}(\H; \lceil (1/2) P(\region) m\rceil, P_{\region}) \E\left[ \frac{|S\cap\region|}{m} \right]
= 2 \phi_{\loss}(\H; \lceil (1/2) P(\region) m\rceil, P_{\region}) P(\region) ,
\end{equation*}
while a Chernoff bound implies the second term in \eqref{eqn:ophi-conditional-decompose} is at most
\begin{equation*}
2 \maxloss P(\region) \exp\left\{ - P(\region)m / 8\right\}
\leq \frac{16 \maxloss}{m}. 
\end{equation*}
Plugging back into \eqref{eqn:conditional-desymmetrization}, we have
\begin{equation}
\label{eqn:conditional-phi-bound}
\phi_{\loss}(\H;m,P)
\leq 4 \phi_{\loss}(\H; \lceil (1/2)P(\region)m\rceil, P_{\region}) P(\region) + \frac{32 \maxloss}{m} + 2\D_{\loss}(\H;P) \sqrt{\frac{1}{m}}.
\end{equation}
Next, note that, for any $\sigma \geq \D_{\loss}(\H ; P)$, $\frac{\sigma}{\sqrt{P(\region)}} \geq \D_{\loss}(\H; P_{\region})$.
Also, if $\region = \region^{\prime} \times \Y$ for some $\region^{\prime} \supseteq \DISF(\H)$, then $\target_{P_{\region}} = \target_{P}$,
so that if $\target_{P} \in \H$, \eqref{eqn:ophi-2} implies
\begin{equation}
\label{eqn:conditional-phi-to-ophi}
\phi_{\loss}(\H; \lceil (1/2)P(\region)m\rceil, P_{\region}) \leq \spec{\phi}_{\loss}\left(\frac{\sigma}{\sqrt{P(\region)}}, \H; \lceil (1/2)P(\region)m \rceil, P_{\region}\right).
\end{equation}
Combining \eqref{eqn:conditional-phi-bound} with \eqref{eqn:conditional-phi-to-ophi},
we see that $\spec{\phi}_{\loss}^{\prime}$ satisfies the condition \eqref{eqn:ophi-2} of Definition~\ref{defn:abstract-ophi}.

Furthermore, by the fact that $\spec{\phi}_{\loss}$ satisfies \eqref{eqn:ophi-1} of Definition~\ref{defn:abstract-ophi},
combined with the monotonicity imposed by the infimum in the definition of $\spec{\phi}_{\loss}^{\prime}$,
it is easy to check that $\spec{\phi}_{\loss}^{\prime}$ also satisfies \eqref{eqn:ophi-1} of Definition~\ref{defn:abstract-ophi}.
In particular, note that any $\H^{\prime\prime} \subseteq \H^{\prime} \subseteq \Bracket{\F}$ and $\region^{\prime\prime} \subseteq \X$
have $\DISF(\H^{\prime\prime}_{\region^{\prime\prime}}) \subseteq \DISF(\H^{\prime})$, so that the range of $\region$ in the infimum is
never smaller for $\H=\H^{\prime\prime}_{\region^{\prime\prime}}$ relative to that for $\H=\H^{\prime}$.
\end{proof}

\begin{proof}[Proof of Corollary~\ref{cor:abstract-conditionals}]
Let $\spec{\phi}_{\loss}^{\prime}$ be as in Lemma~\ref{lem:ophi-conditional}, and define for any $m \in \nats$, $s \in [1,\infty)$, $\zeta \in [0,\infty]$, and $\H \subseteq \Bracket{\F}$,
\begin{align*}
& \spec{U}_{\loss}^{\prime}(\H,\zeta;\PXY,m,s)
\\ & = \tilde{K} \left( \spec{\phi}_{\loss}^{\prime}(\D_{\loss}(\Bracket{\H}(\zeta;\loss)),\H;m,\PXY) + \D_{\loss}(\Bracket{\H}(\zeta;\loss))\sqrt{\frac{s}{m}} + \frac{\maxloss s}{m}\right).
\end{align*}
That is, $\spec{U}_{\loss}^{\prime}$ is the function $\spec{U}_{\loss}$ that would result from using $\spec{\phi}_{\loss}^{\prime}$ in place of $\spec{\phi}_{\loss}$.
Let $\region = \DISF(\H)$, and suppose $\Px(\region) > 0$.
Then since $\DISF(\Bracket{\H}) = \DISF(\H)$ implies
\begin{align*}
\D_{\loss}(\Bracket{\H}(\zeta;\loss))
& = \D_{\loss}(\Bracket{\H}(\zeta;\loss);\PXYR{\region})\sqrt{\Px(\region)}
\\ & = \D_{\loss}(\Bracket{\H}(\zeta/\Px(\region);\loss,\PXYR{\region});\PXYR{\region})\sqrt{\Px(\region)},
\end{align*}
a little algebra reveals that for $m \geq 2 \Px(\region)^{-1}$,
\begin{equation}
\label{eqn:prime-to-conditional}
\spec{U}_{\loss}^{\prime}(\H,\zeta;\PXY,m,s)
\leq 33 \Px(\region) \spec{U}_{\loss}(\H, \zeta/\Px(\region);\PXYR{\region},\lceil (1/2)\Px(\region)m\rceil,s).
\end{equation}
In particular, for $j \geq j_{\loss}$, taking $\H = \F_{j}$, we have (from the definition of $\F_{j}$)
$\region = \DISF(\H) = \DIS(\H) = \region_{j}$, so that when $\Px(\region_{j}) > 0$,
any
\begin{equation*}
m \geq 2 \Px(\region_{j})^{-1} \spec{\SC}_{\loss}\left(\frac{2^{-j-1}}{33\Px(\region_{j})},\frac{2^{2-j}}{\Px(\region_{j})}; \F_{j}, \PXYR{\region_{j}}, \hat{\sfun}(2m)\right)
\end{equation*}
suffices to make the right side of \eqref{eqn:prime-to-conditional} (with $s = \hat{\sfun}(2m)$ and $\zeta = 2^{2-j}$) at most $2^{-j-1}$;
in particular, this means taking $u_{j}$ equal to $2m \lor u_{j-1} \lor 2u_{j-2}$ for any such $m$ (with $\log_{2}(m) \in \nats$) suffices to satisfy
\eqref{eqn:uj} (with the $\spec{\SC}_{\loss}$ in \eqref{eqn:uj} defined with respect to the $\spec{\phi}_{\loss}^{\prime}$ function);
monotonicity of $\zeta \mapsto \spec{\SC}_{\loss}\left(\zeta, \frac{2^{2-j}}{\Px(\region_{j})}; \F_{j}, \PXYR{\region_{j}}, \hat{\sfun}(2m)\right)$
implies \eqref{eqn:uj-conditional} is a sufficient condition for this.
In the special case where $\Px(\region_{j}) = 0$, $\spec{U}_{\loss}^{\prime}(\F_{j},2^{2-j};\PXY,m,s)$ $= \tilde{K} \frac{\maxloss s}{m}$,
so that taking $u_{j} \geq \tilde{K} \maxloss \hat{\sfun}(u_{j}) 2^{j+2} \lor u_{j-1} \lor 2u_{j-1}$ suffices to satisfy \eqref{eqn:uj} 
(again, with the $\spec{\SC}_{\loss}$ in \eqref{eqn:uj} defined in terms of $\spec{\phi}_{\loss}^{\prime}$).
Plugging these values into Theorem~\ref{thm:abstract-active} completes the proof.
\end{proof}

\section{Proofs of Results in Section~\ref{sec:explicit}}
\label{app:applications}

This appendix includes the remaining details of the proof of Theorem~\ref{thm:vc-subgraph-abstract},
to complete the derivations from Section~\ref{subsec:vc-derivation},
and also presents the remaining essential details for the proof of Theorem~\ref{thm:vc-subgraph-active-strong}.

\begin{proof}[Proof of Theorem~\ref{thm:vc-subgraph-abstract}]
Let $\tilde{j}_{\eps} = \lceil \log_{2}(1/\BJM(\eps)) \rceil$.
For $j_{\loss} \leq j \leq \tilde{j}_{\eps}$,
define $s_{j} = \Log\left(\frac{48 \left(2 + \tilde{j}_{\eps} - j\right)^{2}}{\conf}\right)$,
and let $u_{j} = 2^{\lceil \log_{2}(u_{j}^{\prime}) \rceil}$, where
\begin{equation}
\label{eqn:vc-subgraph-abstract-ujprime}
u_{j}^{\prime} = c^{\prime} \left( \tsybcb 2^{j(2-\tsybb)} + \maxloss 2^{j}\right) \left( \vc\left(\G_{\F}\right) \Log\left(\mixcap\maxloss\right) + s_{j}\right),
\end{equation}
for an appropriate universal constant $c^{\prime} \in [1,\infty)$.
A bit of calculus reveals that for $j_{\loss} + 2 \leq j \leq \tilde{j}_{\eps}$, 
$u_{j}^{\prime} \geq u_{j-1}^{\prime}$ and $u_{j}^{\prime} \geq 2 u_{j-2}^{\prime}$,
so that $u_{j} \geq u_{j-1}$ and $u_{j} \geq 2 u_{j-2}$ as well; this is also trivially satisfied for $j \in \{j_{\loss}, j_{\loss} + 1\}$
if we take $u_{j-2} = 1$ in these cases (as in Theorem~\ref{thm:abstract-active}).
Combining this fact with \eqref{eqn:vc-subgraph-mixed},  \eqref{eqn:spec-split-bound}, and \eqref{eqn:restinv-bound}, 
we find that, for an appropriate choice of the constant $c^{\prime}$, these $u_{j}$ satisfy \eqref{eqn:uj}
when we define $\hat{\sfun}$ such that, for every $j \in \{j_{\loss}, \ldots, \tilde{j}_{\eps}\}$,
$\forall m \in \{2u_{j-1},\ldots,u_{j}\}$ with $\log_{2}(m) \in \nats$, 
\begin{equation*}
\hat{\sfun}(m) = \Log\left( \frac{12 \log_{2}\left(4 u_{j} / m\right)^{2} \left( 2 + \tilde{j}_{\eps} - j\right)^{2}}{\conf}\right).
\end{equation*}
Additionally, let $s = \log_{2}(2 / \conf)$.

Next, note that, since $\BJM(\eps) \leq \Transform(\eps)$ and $u_{j}$ is nondecreasing in $j$, 
\begin{align*}
u_{j_{\eps}} 
\leq u_{\tilde{j}_{\eps}} 
\leq 26 c^{\prime} \left( \frac{\tsybcb}{\BJM(\eps)^{2-\tsybb}} + \frac{\maxloss}{\BJM(\eps)}\right) \left( \vc\left(\G_{\F}\right) \Log\left(\mixcap\maxloss\right) + \Log(1/\conf)\right),
\end{align*}
so that, for any $c \geq 26 c^{\prime}$, we have $u \geq u_{i_{\eps}}$, as required by Theorem~\ref{thm:abstract-active}.

For $\region_{j}$ as in Theorem~\ref{thm:abstract-active}, note that by Condition~\ref{con:tsybakov-01} and the definition of $\dc$,
\begin{align*}
\Px\left(\region_{j}\right)
& = \Px\left( \DIS\left( \F\left( \InvTransform\left(2^{2-j}\right); \zo\right) \right) \right)
\leq \Px\left( \DIS\left( \Ball\left( \target, \tsybca \InvTransform\left(2^{2-j}\right)^{\tsyba}\right) \right) \right)
\\ & \leq \dc \max\left\{ \tsybca \InvTransform\left(2^{2-j}\right)^{\tsyba}, \tsybca \eps^{\tsyba}\right\}
\leq \dc \max\left\{ \tsybca \BJM^{-1}\left(2^{2-j}\right)^{\tsyba}, \tsybca \eps^{\tsyba}\right\}.
\end{align*}
Because $\BJM$ is strictly increasing on $(0,1)$,
for $j \leq \tilde{j}_{\eps}$,
$\BJM^{-1}\left(2^{2-j}\right)$ $\geq \eps$, so that this last expression is equal to
$\dc \tsybca \BJM^{-1}\left(2^{2-j}\right)^{\tsyba}$.
This implies
\begin{align}
&\sum_{j=j_{\loss}}^{j_{\eps}} \Px\left(\region_{j}\right) u_{j}
\leq \sum_{j=j_{\loss}}^{\tilde{j}_{\eps}} \Px\left(\region_{j}\right) u_{j} \notag
\\ & \lesssim \sum_{j=j_{\loss}}^{\tilde{j}_{\eps}} \tsybca \dc \BJM^{-1}\left( 2^{2-j} \right)^{\tsyba} \left( \tsybcb 2^{j(2-\tsybb)} + \maxloss 2^{j}\right) \left( A_{1} + \Log\left(2 + \tilde{j}_{\eps} - j\right)\right). \label{eqn:vc-subgraph-PUjuj-sum-bound}
\end{align}
We can change the order of summation in the above expression by letting 
$i = \tilde{j}_{\eps} - j$ and summing from $0$ to $N=j_{\eps} - j_{\loss}$.
In particular, since $2^{\tilde{j}_{\eps}} \leq 2/\BJM(\eps)$,
\eqref{eqn:vc-subgraph-PUjuj-sum-bound} is at most
\begin{equation}
\label{eqn:vc-subgraph-PUjuj-sum-bound-i}
\sum_{i=0}^{N} \tsybca \dc \BJM^{-1}\left(2^{2-\tilde{j}_{\eps}}2^{i}\right)^{\tsyba} \left(\frac{4 \tsybcb 2^{i (\tsybb - 2)}}{\BJM(\eps)^{2-\tsybb}} + \frac{2 \maxloss 2^{- i}}{\BJM(\eps)}\right) \left( A_{1} + \Log(i + 2)\right).
\end{equation}

Since $x \mapsto \BJM^{-1}(x)/x$ is nonincreasing on $(0,\infty)$, 
we have $\BJM^{-1}\left( 2^{2 -\tilde{j}_{\eps}} 2^{i}\right) \leq 2^{i+2} \BJM^{-1}\left(2^{- \tilde{j}_{\eps}} \right)$, 
and since $\BJM^{-1}$ is increasing, this latter expression is at most\break $2^{i+2} \BJM^{-1}\left(\BJM(\eps)\right) = 2^{i+2} \eps$.
Thus, \eqref{eqn:vc-subgraph-PUjuj-sum-bound-i} is at most
\begin{equation}
\label{eqn:vc-subgraph-pre-split}
16 \tsybca \dc \eps^{\tsyba} \sum_{i=0}^{N} \left( \frac{\tsybcb 2^{i (\tsyba+\tsybb - 2)}}{\BJM(\eps)^{2-\tsybb}} + \frac{\maxloss 2^{i(\tsyba-1)}}{\BJM(\eps)}\right) \left( A_{1} + \Log(i+2)\right).
\end{equation}
In general, $\Log(i+2) \leq \Log(N+2)$, so that
$\sum_{i=0}^{N} 2^{i (\tsyba + \tsybb - 2)} \left( A_{1} + \Log(i+2)\right)$
$\leq (A_{1} + \Log(N+2)) (N+1)$
and
$\sum_{i=0}^{N} 2^{i (\tsyba - 1)} \left( A_{1} + \Log(i+2)\right)
\leq$\break $(A_{1} + \Log(N+2)) (N+1)$.
When $\tsyba + \tsybb < 2$ holds, we also have $\sum_{i=0}^{N} 2^{i (\tsyba + \tsybb - 2)} \leq$ $\sum_{i=0}^{\infty} 2^{i (\tsyba + \tsybb - 2)}$ $= \frac{1}{1 - 2^{(\tsyba+\tsybb-2)}}$
and furthermore $\sum_{i=0}^{N} 2^{i (\tsyba+\tsybb-2)} \Log(i+2) \leq$\break $\sum_{i=0}^{\infty} 2^{i (\tsyba+\tsybb-2)} \Log(i+2) \leq \frac{2}{1-2^{(\tsyba+\tsybb-2)}} \Log\left(\frac{1}{1-2^{(\tsyba+\tsybb-2)}}\right)$.
Similarly, if $\tsyba < 1$, $\sum_{i=0}^{N} 2^{i (\tsyba-1)} \leq \sum_{i=0}^{\infty} 2^{i (\tsyba-1)}$ $= \frac{1}{1-2^{(\tsyba-1)}}$ and likewise
$\sum_{i=0}^{N} 2^{i (\tsyba-1)} \Log(i+2) \leq \sum_{i=0}^{\infty} 2^{i (\tsyba-1)} \Log(i+2) \leq \frac{2}{1-2^{(\tsyba-1)}} \Log\left(\frac{1}{1-2^{(\tsyba-1)}}\right)$.
By combining these observations (along with a convention that $\frac{1}{1-2^{(\tsyba-1)}}=\infty$ when $\tsyba=1$,
and $\frac{1}{1-2^{(\tsyba+\tsybb-2)}}=\infty$ when $\tsyba=\tsybb=1$), and noting that $\frac{1}{1-2^{(\tsyba+\tsybb-2)}} / \min\left\{ \frac{1}{1-2^{(\tsyba-1)}}, \frac{1}{1-2^{(\tsybb-1)}}\right\} \in [1/2,1]$,
we find that \eqref{eqn:vc-subgraph-pre-split} is
\begin{equation*}
\lesssim \tsybca \dc \eps^{\tsyba} \left( \frac{\tsybcb (A_{1} + \Log(B_{1})) B_{1}}{\BJM(\eps)^{2-\tsybb}} + \frac{\maxloss (A_{1} + \Log(C_{1})) C_{1}}{\BJM(\eps)}\right).
\end{equation*}
Thus, for an appropriately large numerical constant $c$,
any $n$ satisfying \eqref{eqn:vc-subgraph-abstract-n} has
\begin{equation*}
n \geq s + 2 e \sum_{j=j_{\loss}}^{\tilde{j}_{\eps}} \Px(\region_{j}) u_{j},
\end{equation*}
as required by Theorem~\ref{thm:abstract-active}.

Finally, we need to show the success probability from Theorem~\ref{thm:abstract-active} is at least $1-\conf$, for $\hat{\sfun}$ and $s$ as above.
Toward this end, note that

\begin{align*}
\sum_{i=1}^{\log_{2}(u_{j_{\eps}})} 6 e^{-\hat{\sfun}(2^{i})}
& \leq \sum_{j=j_{\loss}}^{\tilde{j}_{\eps}} \sum_{i = \log_{2}(u_{j-1})+1}^{\log_{2}(u_{j})} \frac{\conf}{2 \left(2 + \log_{2}(u_{j}) - i\right)^{2} \left(2 + \tilde{j}_{\eps} - j\right)^{2}}
\\ & = \sum_{j=j_{\loss}}^{\tilde{j}_{\eps}} \sum_{t=0}^{\log_{2}(u_{j}/u_{j-1})-1} \frac{\conf}{2 (2+t)^{2} \left(2 + \tilde{j}_{\eps} - j\right)^{2}}
\\ & < \sum_{j=j_{\loss}}^{\tilde{j}_{\eps}} \frac{\conf}{2 \left( 2 + \tilde{j}_{\eps} - j\right)^{2}}
< \sum_{t = 0}^{\infty} \frac{\conf}{2 (2+t)^{2}}
< \conf / 2.
\end{align*}
Noting that $2^{-s} = \conf / 2$, we find that indeed
\begin{equation*}
1 - 2^{-s} - \sum_{i=1}^{\log_{2}(u_{j_{\eps}})} 6 e^{-\hat{\sfun}(2^{i})} \geq 1 - \conf.
\end{equation*}
Therefore, Theorem~\ref{thm:abstract-active} implies the stated result.
\end{proof}

We note that the values $\hat{\sfun}(m)$ used in the proof of Theorem~\ref{thm:vc-subgraph-abstract}
have a direct dependence on the parameters $\tsybcb$, $\tsybb$, $\tsybca$, $\tsyba$, and $\mixcap$. 
Such a dependence may be undesirable for many applications, where information about these values is not available.
However, one can easily follow this same proof, taking $\hat{\sfun}(m) = \Log\left(\frac{12 \log_{2}(2m)^{2}}{\conf} \right)$ instead,
which only leads to an increase by a $\log\log$ factor: specifically, replacing the factor of $A_{1}$ in \eqref{eqn:vc-subgraph-abstract-u},
and the factors $(A_{1}+\Log(B_{1}))$ and $(A_{1}+\Log(C_{1}))$ in \eqref{eqn:vc-subgraph-abstract-n},
with a factor of $(A_{1} + \Log(\Log(\maxloss/\BJM(\eps))))$. 
It is not clear whether it is always possible to achieve the slightly tighter
result of Theorem~\ref{thm:vc-subgraph-abstract} without having direct access to the values $\tsybcb$, $\tsybb$, $\tsybca$, $\tsyba$, and $\mixcap$ in the algorithm.

\begin{proof}[Proof Sketch of Theorem~\ref{thm:vc-subgraph-active-strong}]
The proof follows analogously to the proof of Theorem~\ref{thm:vc-subgraph-abstract}, with the exception that now,
for each integer $j$ with $j_{\loss} \leq j \leq \tilde{j}_{\eps}$,
we replace the definition of $u_{j}^{\prime}$ from \eqref{eqn:vc-subgraph-abstract-ujprime} with the following definition.
Letting 
\begin{equation*}
c_{j} = \vc(\G_{\F}) \Log\left( \left(\maxloss^{2} / \tsybcb\right)\left(\tsybca \dc 2^{j} \BJM^{-1}(2^{2-j})^{\tsyba}\right)^{\tsybb}\right),
\end{equation*}
define
\begin{equation*}
u_{j}^{\prime} = c^{\prime} \left( \tsybcb 2^{j (2-\tsybb)} \left( \tsybca \dc \BJM^{-1}(2^{2-j})^{\tsyba}\right)^{1-\tsybb} + \maxloss 2^{j}\right)\left( c_{j} + s_{j}\right),
\end{equation*}
where $c^{\prime} \in [1,\infty)$ is an appropriate universal constant, and $s_{j}$ is as in the proof of Theorem~\ref{thm:vc-subgraph-abstract}.
With this substitution in place, the values $u_{j}$ and $s$, and function $\hat{\sfun}$, are then defined as in the proof of Theorem~\ref{thm:vc-subgraph-abstract}.
Since $x \mapsto x \BJM^{-1}(1/x)$ is nondecreasing, a bit of calculus reveals $u_{j} \geq u_{j-1}$ and $u_{j} \geq 2 u_{j-2}$. 
Combined with \eqref{eqn:vc-subgraph-strong-conditional}, \eqref{eqn:restinv-bound}, \eqref{eqn:spec-split-bound}, and Lemma~\ref{lem:bjm-strong-convexity},
this implies we can choose the constant $c^{\prime}$ so that these $u_{j}$ satisfy \eqref{eqn:uj-conditional}.
By an identical argument to that used in Theorem~\ref{thm:vc-subgraph-abstract}, we have
\begin{equation*}
1 - 2^{-s} - \sum_{i=1}^{\log_{2}(u_{j_{\eps}})} 6 e^{-\hat{\sfun}(2^{i})} \geq 1 - \conf.
\end{equation*}
It remains only to show that any values of $u$ and $n$ satisfying \eqref{eqn:vc-subgraph-active-strong-u} and \eqref{eqn:vc-subgraph-active-strong-n}, respectively,
necessarily also satisfy the respective conditions for $u$ and $n$ in Corollary~\ref{cor:abstract-conditionals}.

Toward this end, note that since 
$x \mapsto x \BJM^{-1}(1 / x)$ is nondecreasing on $(0,\infty)$, we have that
\begin{align*}
u_{j_{\eps}} \leq u_{\tilde{j}_{\eps}} \lesssim \left( \frac{\tsybcb \left(\tsybca \dc \eps^{\tsyba}\right)^{1-\tsybb}}{\BJM(\eps)^{2-\tsybb}} + \frac{\maxloss}{\BJM(\eps)}\right) A_{2}.
\end{align*}
Thus, for an appropriate choice of $c$, any $u$ satisfying \eqref{eqn:vc-subgraph-active-strong-u} has 
$u \geq u_{j_{\eps}}$, as required by Corollary~\ref{cor:abstract-conditionals}.

Finally, note that for $\region_{j}$ as in Theorem~\ref{thm:abstract-active}, and $i_{j} = \tilde{j}_{\eps} - j$,
\begin{multline*}
\sum_{j=j_{\loss}}^{j_{\eps}} \Px(\region_{j}) u_{j}
\leq \sum_{j=j_{\loss}}^{j_{\eps}} \tsybca \dc \BJM^{-1}(2^{2-j})^{\tsyba} u_{j}
\\  \lesssim
\sum_{j=j_{\loss}}^{\tilde{j}_{\eps}}
\tsybcb \left( \tsybca \dc 2^{j} \BJM^{-1}(2^{2-j})^{\tsyba}\right)^{2-\tsybb} \left( A_{2} + \Log\left( i_{j} + 2 \right)\right)
\\ 
+ \sum_{j=j_{\loss}}^{\tilde{j}_{\eps}}
\maxloss \tsybca \dc 2^{j} \BJM^{-1}(2^{2-j})^{\tsyba}\left( A_{2} + \Log\left( i_{j} + 2 \right)\right).
\end{multline*}

By changing the order of summation, now summing over values of $i_{j}$ from $0$ to $N = \tilde{j}_{\eps} - j_{\loss} \leq \log_{2}(4 \maxloss / \BJM(\eps))$,
and noting $2^{\tilde{j}_{\eps}} \leq 2/\BJM(\eps)$, and $\BJM^{-1}(2^{-\tilde{j}_{\eps}} 2^{2+i}) \leq 2^{2+i} \eps$ for $i \geq 0$,
this last expression is
\begin{align}
\label{eqn:vc-subgraph-active-strong-summation}
\lesssim & \sum_{i=0}^{N}
\tsybcb \left( \frac{\tsybca \dc 2^{i (\tsyba-1)} \eps^{\tsyba}}{\BJM(\eps)}\right)^{2-\tsybb} \left( A_{2} + \Log\left( i + 2 \right)\right)
\\ & + \sum_{i=0}^{N}\frac{\maxloss \tsybca \dc 2^{i (\tsyba-1)} \eps^{\tsyba}}{\BJM(\eps)}\left( A_{2} + \Log\left( i + 2 \right)\right). \notag
\end{align}
Considering these sums separately, we have
$\sum_{i=0}^{N} 2^{i (\tsyba-1)(2-\tsybb)} (A_{2} + \Log(i+2)) \leq (N+1) (A_{2} + \Log(N+2))$
and $\sum_{i=0}^{N} 2^{i (\tsyba-1)}(A_{2} + \Log(i+2)) \leq$\break $(N\!+\!1) (A_{2} \!+\! \Log(N\!+\!2))$.
When $\tsyba \!<\! 1$, we have
$\sum_{i=0}^{N} 2^{i (\tsyba-1)(2-\tsybb)} (A_{2} \!+\! \Log(i\!+\!2))$\break 
$\leq \sum_{i=0}^{\infty} 2^{i (\tsyba-1)(2-\tsybb)} (A_{2} + \Log(i+2)) \leq \frac{2}{1-2^{(\tsyba-1)(2-\tsybb)}}\Log\!\left(\frac{1}{1-2^{(\tsyba-1)(2-\tsybb)}}\right) +$\break $\frac{A_{2}}{1-2^{(\tsyba-1)(2-\tsybb)}}$,
and 
$\sum_{i=0}^{N} 2^{i (\tsyba-1)} (A_{2} + \Log(i+2)) \leq \frac{2}{1-2^{(\tsyba-1)}} \Log\left(\frac{1}{1-2^{(\tsyba-1)}}\right) + \frac{A_{2}}{1-2^{(\tsyba-1)}}$.
Thus, noting that $\frac{1}{1-2^{(\tsyba-1)(2-\tsybb)}} / \frac{1}{1-2^{(\tsyba-1)}} \in [1/2,1]$, 
we generally have
$\sum_{i=0}^{N} 2^{i (\tsyba-1)(2-\tsybb)}(A_{2} + \Log(i+2)) \lesssim C_{1} (A_{2} + \Log(C_{1}))$
and\break
$\sum_{i=0}^{N} 2^{i (\tsyba-1)} (A_{2} + \Log(i+2)) \lesssim C_{1} (A_{2} + \Log(C_{1}))$.
Plugging this into \eqref{eqn:vc-subgraph-active-strong-summation}, we find that for an appropriately large numerical constant $c$,
any $n$ satisfying \eqref{eqn:vc-subgraph-active-strong-n} has $n \geq \sum_{j=j_{\loss}}^{j_{\eps}} \Px(\region_{j}) u_{j}$, as required by Corollary~\ref{cor:abstract-conditionals}.
\end{proof}

We note that, as in Theorem~\ref{thm:vc-subgraph-abstract}, the values $\hat{\sfun}$ used to obtain 
Theorem~\ref{thm:vc-subgraph-active-strong} have a direct dependence on
certain values, which are typically not directly accessible in practice: in this case, $\tsybca$, $\tsyba$, and $\dc$.
However, as was the case for Theorem~\ref{thm:vc-subgraph-abstract}, we can obtain only slightly worse results by
instead taking $\hat{\sfun}(m) = \Log\left(\frac{12 \log_{2}(2m)^{2}}{\conf}\right)$,
which again only leads to an increase by a $\log\log$ factor: replacing the factor of $A_{2}$ in \eqref{eqn:vc-subgraph-active-strong-u},
and the factor of $(A_{2}+\Log(C_{1}))$ in \eqref{eqn:vc-subgraph-active-strong-n}, with a factor of $(A_{2} + \Log(\Log(\maxloss/\BJM(\eps))))$. 
As before, it is not clear whether the slightly tighter result of Theorem~\ref{thm:vc-subgraph-active-strong} is always available,
without requiring direct dependence on these quantities.

\subsection{Derivations for Section~\ref{subsec:example}}
\label{subsec:example-derivations}

For completeness, we include here derivations of quantities appearing in the 
example given in Section~\ref{subsec:example}.
We begin with the claim that, for any $\omega \in (0,1/2]$, 
\eqref{eqn:bracketing-entropy-bound} is satisfied in Condition~\ref{con:entropy}
with the values $\entc = \frac{7}{\omega}$ and $\entrho = \frac{1}{3}+\omega$.
Specifically, for a given $\eps > 0$, let $i_{\eps} = \left\lceil \frac{3}{\eps^{2/3}} \right\rceil$,
and let $\G_{\eps}$ be the set of functions $g$ in $\FunctionsXY$ with 
$g(x,y) \in \{ j \eps / \sqrt{2} : j \in \{0,\ldots, \lceil 4\sqrt{2} / \eps \rceil-1 \} \}$
for each $x \in \{ x_{i} : 1 \leq i \leq i_{\eps}\}$ and $y \in \Y$, 
and $g(x,y) = 0$ for every $x \in \X \setminus \{x_{i} : 1 \leq i \leq i_{\eps}\}$ and $y \in \Y$.
For each $g \in \G_{\eps}$, let $g^{\prime}$ be the function in $\FunctionsXY$
with $g^{\prime}(x,y) = g(x,y) + \eps/\sqrt{2}$ for each $x \in \{ x_{i} : 1 \leq i \leq i_{\eps} \}$ and $y \in \Y$,
and $g^{\prime}(x,y) = 4$ for each $x \in \X \setminus \{x_{i} : 1 \leq i \leq i_{\eps}\}$ and $y \in \Y$.
Note that $\bigcup_{g \in \G_{\eps}} [ g, g^{\prime} ]$ contains all functions $g$ in $\FunctionsXY$
having $0 \leq g(x,y) \leq 4$ for all $x \in \X$ and $y \in \Y$; in particular, this implies it contains $\G_{\F}$.
Furthermore, for each $g \in \G_{\eps}$, 
$\| g - g^{\prime} \|_{\PXY}^{2}  = \sum_{i = 1}^{i_{\eps}} \frac{\eps^{2}}{2} \Px(\{x_{i}\}) + \sum_{i=i_{\eps}+1}^{\infty} 16 \Px(\{x_{i}\}) 
\leq \frac{\eps^{2}}{2} + \frac{16 \cdot 90}{\pi^{4}} \int_{i_{\eps}}^{\infty} \frac{1}{x^{4}} {\rm d}x
= \frac{\eps^{2}}{2} + \frac{16 \cdot 30}{\pi^{4}} \frac{1}{i_{\eps}^{3}}
\leq \frac{\eps^{2}}{2} + \frac{16 \cdot 30}{27 \pi^{4}} \eps^{2}
< \eps^{2}$,
so that $[g,g^{\prime}]$ is an $\eps$-bracket under $L_{2}(\PXY)$.
Therefore, $\covering_{[]}\left( \eps, \G_{\F}, L_{2}(\PXY)\right) \leq |\G_{\eps}| = \lceil 4\sqrt{2} / \eps \rceil^{2 i_{\eps}}$,
so that (taking $F = \maxloss = 4$, constant, in Condition~\ref{con:entropy})
$\ln \covering_{[]}\left( 4\eps, \G_{\F}, L_{2}(\PXY)\right) \leq 2 \left\lceil \frac{3}{(4\eps)^{2/3}} \right\rceil \ln\left( \left\lceil \frac{\sqrt{2}}{\eps} \right\rceil \right)$.
Since $\ln(x) \leq t x^{1/t}$ for any $x,t \geq 1$, this is at most
$\frac{7}{\omega} \eps^{-2\left( \frac{1}{3}+\omega\right)}$ when $\eps \in (0,1)$, for any value $\omega \in (0,1/2]$.
This is trivially also an upper bound on $\ln \covering_{[]}\left( 4\eps, \G_{\F}, L_{2}(\PXY)\right)$ for all $\eps \geq 1$ (since $\covering_{[]}\left( 4\eps, \G_{\F}, L_{2}(\PXY)\right) = 1$ in that case).
Thus, \eqref{eqn:bracketing-entropy-bound} is satisfied with $\entc = \frac{7}{\omega}$ and $\entrho = \frac{1}{3}+\omega$, for any choice of $\omega \in (0,1/2]$, as claimed.

Next, we present a proof of the claimed $\Omega(\eps^{-4/3})$ lower bound on the 
sample size required to obtain an $\eps$ bound on the minimax expected excess error rate of passive learning methods
in the example scenario.
We approach this with the classic technique of Assouad (see e.g., \citep*{tsybakov:09}).
Specifically, fix any $\eps \in (0,(1-2\bound)/64)$, 
and fix a sample size $m \in \nats$ with $m \leq  2^{-13} (1-2\bound)^{1/3} \eps^{-4/3}$. 
Let $j_{0} = \left\lfloor \left(\frac{72}{107 \pi^{4}} \right)^{1/4} \left(\frac{1-2\bound}{\eps}\right)^{1/3}\right\rfloor$,
$j_{1} = \left\lfloor \frac{1}{2^{4/3}} \left(\frac{1-2\bound}{\eps}\right)^{1/3} \right\rfloor$,  
and $k = j_{1}-j_{0}+1$.
In particular, a simple calculation reveals 
$k \geq \frac{27}{250} \left(\frac{1-2\bound}{\eps}\right)^{1/3}$.
Now for any binary vector $v = (v_{1},\ldots,v_{k}) \in \{0,1\}^{k}$, 
define $P_{v}$ as the probability measure on $\X \times \Y$ with marginal $\Px$ on $\X$ (as specified in the construction),
$\eta(x_{i};P_{v}) = 1$ for $i \in \nats \setminus \{j_{0},\ldots,j_{1}\}$, 
and $\eta(x_{i};P_{v}) = \bound + (1-2\bound) v_{i - j_{0} + 1}$ for $i \in \{j_{0},\ldots,j_{1}\}$.
Then note that for any $v,v^{\prime} \in \{0,1\}^{k}$ with $\|v-v^{\prime}\|_{1} = 1$, 
the total variation distance $\| P_{v} - P_{v^{\prime}} \|$ between the corresponding distributions
is at most $\frac{90}{\pi^{4} j_{0}^{4}} (1-2\bound)$.
This further implies $\| P_{v}^{m} - P_{v^{\prime}}^{m} \| \leq$\break $m \| P_{v} - P_{v^{\prime}} \| 
\leq 2^{-13} \frac{90}{\pi^{4} j_{0}^{4}} \left(\frac{1-2\bound}{\eps}\right)^{4/3} < \frac{1}{2}$.
Therefore, Theorem 2.12(ii) of \citep*{tsybakov:09} implies that,
for any estimator $\hat{v} : (\X \times \Y)^{m} \to \{0,1\}^{k}$ (possibly randomized),
there exists a choice $v \in \{0,1\}^{k}$ such that, defining $\PXY = P_{v}$, 
we have $\E\left[ \left\|\hat{v}(\Data_{m}) - v\right\|_{1} \right] \geq \frac{k}{4} \geq \frac{27}{1000} \left(\frac{1-2\bound}{\eps}\right)^{1/3}$.
In particular, for any passive learning algorithm $\alg :$\break $(\X \times \Y)^{m} \to \Functions$,
we can define a vector $\hat{v}$ based on the returned function $\hat{f}$ from $\alg$
by letting $\hat{v}_{i} = (\sign(\hat{f}(x_{i+j_{0}-1}))+1)/2$ for each $i \in \{1,\ldots,k\}$.
Then we note that for any $v \in \{0,1\}^{k}$, if $\PXY = P_{v}$, then $\er(\hat{f}) - \er(\target) \geq \frac{90}{\pi^{4} j_{1}^{4}} (1-2\bound) \| \hat{v} - v \|_{1}$.
Thus, there exists a choice of $v \in \{0,1\}^{k}$ such that, defining $\PXY = P_{v}$, 
we have that for $\hat{f} = \alg(\Data_{m})$, 
$\E\left[ \er(\hat{f}) - \er(\target) \right] \geq \frac{90}{\pi^{4} j_{1}^{4}} (1-2\bound) \cdot \frac{27}{1000} \left(\frac{1-2\bound}{\eps}\right)^{1/3} > \eps$.
Thus, since these $P_{v}$ distributions satisfy the description of the construction in Section~\ref{subsec:example},
we see that to guarantee expected excess error rate at most $\eps$ for all $\PXY$ fitting the description in the construction, 
any passive learning method would require the sample size $m$ for its input labeled data set to be greater than $2^{-13} (1-2\bound)^{1/3} \eps^{-4/3} = \Omega(\eps^{-4/3})$, as claimed.
In particular, this agrees with the dependence on $\eps$ derived for $\ERM_{\loss}$ in Section~\ref{subsec:example} (up to a logarithmic factor).
%
In contrast, the analysis of \ACAL~in Section~\ref{subsec:example} reveals that (by choosing $\conf = \eps/2$), \ACAL~can achieve 
$\E[\er(\hat{h})-\er(\target)] \leq \eps$ for all such $\PXY$ with a number of label requests $n$
having only $O(\eps^{-7/12} \Log(1/\eps))$ dependence on $\eps$, a significant decrease compared to the $\Omega(\eps^{-4/3})$ lower bound
we have just established for all passive learning methods.

\subsection{Derivations for Section~\ref{subsec:linsep}}
\label{subsec:linsep-derivations}

For completeness, we include here a derivation of the 
parameters $\tsybca$ and $\tsyba$ for which the
distributions $\PXY$ in the example in Section~\ref{subsec:linsep}
satisfy Condition~\ref{con:tsybakov-01}.  Specifically, 
as in Section~\ref{subsec:linsep}, let $\loss$ be the 
quadratic loss, fix an integer $k \geq 5$,
suppose $\Px$ is uniform on $\{ x \in \reals^{k} : \|x\| = 1 \}$,
and suppose $\PXY$ is such that $\target(x) = w^{*} \cdot x$ for some $w^{*} \in \reals^{k}$ with $\|w^{*}\| = 1$.
%
In particular, for this choice of $\loss$, this implies 
$\eta(x) = (w^{*} \cdot x + 1)/2$.
For any $f \in \Functions$, 
$\er(f) - \er(\target) 
= \E\left[ |1-2\eta(X)| \big| X \in \DIS(\{f,\target\}) \right] \dist(f,\target)$, for $X \sim \Px$.
Therefore, among functions $f \in \Functions$ with a given value $p$ of $\dist(f,\target)$,
the functions with minimal $\er(f)-\er(\target)$ are those that minimize 
$\E\!\left[ | 2\eta(X) \!-\! 1 | \big| X \!\in\! \DIS(\{f,\target\}) \right]$
subject to $\Px(\DIS(\{f,\target\})) = p$; since $|2 \eta(x) - 1| = |w^{*} \cdot x|$
is increasing in $|w^{*} \cdot x|$ and $t \mapsto \Px( x : |w^{*} \cdot x| \leq t )$ is continuous, 
any $f \in \Functions$ of minimal $\er(f) - \er(\target)$
subject to $\dist(f,\target) = p$ has $\DIS(\{f,\target\}) = \{ x : |w^{*} \cdot x| \leq \gamma_{p} \}$
(up to probability zero differences) for some $\gamma_{p} \in [0,1]$ 
chosen so that $\Px(x : |w^{*} \cdot x| \leq \gamma_{p}) = p$;
in particular, the minimum value of $\er(f) - \er(\target)$ among such functions $f$ is
$\E\left[ |w^{*} \cdot X| \ind[ |w^{*} \cdot X| \leq \gamma_{p} ] \right]$.
Fix such a function $f_{p}$ with $\DIS(\{f_{p},\target\}) =$\break $\{ x : |w^{*} \cdot x| \leq \gamma_{p} \}$.

For $X \sim \Px$, one can show that the $[0,1]$-valued random variable 
$|w^{*} \cdot X|$ has density function $g(t) = \frac{2\Gamma(k/2)}{\sqrt{\pi}\Gamma((k-1)/2)} (1-t^{2})^{\frac{k-3}{2}}$,
where $\Gamma$ is the usual gamma function 
(see \citep*{li:11} for a derivation of the CDF, from which this $g$ can be derived). 
Thus, 
\begin{align*}
\E\left[ |w^{*} \cdot X| \ind[ |w^{*} \cdot X| \leq \gamma_{p} ] \right]
&= \int_{0}^{\gamma_{p}} \frac{2 \Gamma(k/2)}{\sqrt{\pi}\Gamma((k-1)/2)} t (1-t^{2})^{\frac{k-3}{2}} {\rm d}t 
\\ & = \frac{2 \Gamma(k/2)}{\sqrt{\pi}\Gamma((k-1)/2)} \frac{1}{k-1} \left( 1 - (1-\gamma_{p}^{2})^{\frac{k-1}{2}} \right).
\end{align*}
When $\gamma_{p} \leq \frac{1}{\sqrt{k-3}}$, some basic calculus reveals $1 - (1-\gamma_{p}^{2})^{\frac{k-1}{2}} \geq \gamma_{p}^{2} \frac{k-1}{2e}$. 
Since one can also verify that $\frac{2 \Gamma(k/2)}{\sqrt{\pi}\Gamma((k-1)/2)} \geq \sqrt{k/3}$, we have that
if $p$ is such that $\gamma_{p} \leq \frac{1}{\sqrt{k-3}}$, then 
$\er(f_{p}) - \er(\target) \geq \frac{\sqrt{k} \gamma_{p}^{2}}{2 e \sqrt{3}}$.
It also holds that $\dist(f_{p},\target) = \Px( x : |w^{*} \cdot x| \leq \gamma_{p} ) \leq \sqrt{k} \gamma_{p}$
\citep*[see e.g.,][]{hanneke:07b}.  Together, we have that if $\gamma_{p} \leq \frac{1}{\sqrt{k-3}}$, 
then 
$\dist(f_{p},\target) \leq \sqrt{k} \gamma_{p} = \sqrt{2 e} (3k)^{1/4} \left( \frac{\sqrt{k} \gamma_{p}^{2}}{2 e \sqrt{3}} \right)^{1/2}
\leq \sqrt{2 e} (3k)^{1/4} \left( \er(f_{p}) - \er(\target) \right)^{1/2}$.

Noting that $\gamma_{p}$ is continuous in $p$, with $\gamma_{0} = 0$ and $\gamma_{1} = 1$,
the intermediate value theorem implies $\exists p_{*} \in [0,1]$ with $\gamma_{p_{*}} = \frac{1}{\sqrt{k-3}}$.
Since
$\sqrt{2 e} (3k)^{1/4} \left( \frac{\sqrt{k} \gamma_{p_{*}}^{2}}{2 e \sqrt{3}} \right)^{1/2}
= \sqrt{\frac{k}{k-3}} > 1$,
we have $\sqrt{2 e} (3k)^{1/4} \left( \er(f_{p_{*}}) - \er(\target) \right)^{1/2} > 1$.
Now for any $p$ with $\gamma_{p} > \frac{1}{\sqrt{k-3}}$, we have $\DIS(\{f_{p},\target\}) \supseteq \DIS(\{f_{p_{*}},\target\})$,
which implies $\er(f_{p}) \geq \er(f_{p_{*}})$.
Therefore, $\sqrt{2 e} (3k)^{1/4} \left( \er(f_{p}) - \er(\target) \right)^{1/2} > 1 \geq \dist(f_{p},\target)$.
%
Thus, we have established that $\dist(f_{p},\target) \leq \sqrt{2 e} (3k)^{1/4} \left( \er(f_{p}) - \er(\target) \right)^{1/2}$
for \emph{every} $p \in [0,1]$.  Since, for every $p \in [0,1]$, 
$f_{p}$ was chosen to minimize $\er(f_{p})-\er(\target)$ subject to $\dist(f_{p},\target) = p$,
we have $\dist(f,\target) \leq \sqrt{2 e} (3k)^{1/4} \left( \er(f) - \er(\target) \right)^{1/2}$
for \emph{every} $f \in \Functions$: that is, that Condition~\ref{con:tsybakov-01} 
holds with $\tsybca = \sqrt{2 e} (3k)^{1/4}$ and $\tsyba = 1/2$.

\section{Remarks on VC Major and VC Hull Classes}
\label{subsec:vc-major}

In addition to VC Subgraph classes, and scenarios satisfying general entropy conditions,
another widely-studied family of function classes includes
\emph{VC major} classes.  Specifically, we say $\G$ is a VC major class with index $\dim$ if
$\dim = \vc(\{ \{z : g(z)\geq t\} : g \in \G, t \in \reals\}) < \infty$.
We can derive results for VC major classes, analogously to the above, as follows.
For brevity, we leave many of the details as an exercise for the reader.
For any VC major class $\G \subseteq \FunctionsXY$ with index $\dim$, by reasoning similar to that of \citet*{gine:06},
one can show that if $\Env = \maxloss \ind_{\region} \geq \Env(\G)$ for some measurable $\region \subseteq \X\times \Y$,
then for any distribution $P$ and $\eps > 0$,
\begin{equation*}
\ln \covering\left(\eps \|\Env\|_{P}, \G, L_{2}(P) \right) \lesssim \frac{\dim}{\eps} \log\left(\frac{\maxloss}{\eps}\right)\log\left(\frac{1}{\eps}\right).
\end{equation*}
This implies that for $\F$ a VC major class, and $\loss$ classification-calibrated and either nonincreasing or Lipschitz on $[-\sup_{h\in\F}\sup_{x\in\X}|h(x)|,\sup_{h\in\F}\sup_{x\in\X}|h(x)|]$,
if $\target \in \F$ and $\PXY$ satisfies Condition~\ref{con:tsybakov-01} and Condition~\ref{con:tsybakov-sur},
then the conditions of Theorem~\ref{thm:abstract-active}
can be satisfied with the probability bound being at least $1-\conf$,
for some
$u = \tilde{O}\left( \frac{\dc^{1/2} \eps^{\tsyba/2}}{\BJM(\eps)^{2-\tsybb/2}} + \BJM(\eps)^{\tsybb-2}\right)$
and $n = \tilde{O}\left( \frac{\dc^{3/2} \eps^{3\tsyba/2}}{\BJM(\eps)^{2-\tsybb/2}} + \dc\eps^{\tsyba} \BJM(\eps)^{\tsybb-2}\right)$,
where $\dc = \dc(\tsybca \eps^{\tsyba})$, and $\tilde{O}(\cdot)$ hides logarithmic and constant factors.
Under Condition~\ref{con:strong-convexity},
with $\tsybb$ as in Lemma~\ref{lem:bjm-strong-convexity},
the conditions of Corollary~\ref{cor:abstract-conditionals}
can be satisfied with the probability bound being at least $1-\conf$,
for some
$u = \tilde{O}\left( \left(\frac{1}{\BJM(\eps)}\right)\left(\frac{\dc\eps^{\tsyba}}{\BJM(\eps)}\right)^{1-\tsybb/2} \right)$
and
$n = \tilde{O}\left(\left(\frac{\dc\eps^{\tsyba}}{\BJM(\eps)}\right)^{2-\tsybb/2} \right)$.
When $\dc$ is small, these values of $n$ (and indeed $u$) compare favorably to the value of 
$m = \tilde{O}\left( \BJM(\eps)^{\tsybb/2-2} \right)$, 
derived analogously from Theorem~\ref{thm:erm}, sufficient for $\ERM_{\loss}(\F,\Data_{m})$ to achieve the same \citep*[see][]{gine:06}.

For example, for $\X = [0,1]$ and $\F$ the class of all nondecreasing functions mapping $\X$ to $[-1,1]$,
$\F$ is a VC major class with index $1$, and $\dc(0) \leq 2$ for all distributions $\Px$.  Thus, for instance,
if $\eta$ is nondecreasing and $\loss$ is the quadratic loss, then $\target \in \F$, and \ACAL~achieves
excess error rate $\eps$ with high probability for some $u = \tilde{O}\left( \eps^{2\tsyba-3} \right)$
and $n = \tilde{O}\left( \eps^{3(\tsyba-1)} \right)$.

VC major classes are contained in special types of \emph{VC hull} classes, 
which are more generally defined as follows.
Let $\C$ be a VC Subgraph class of functions on $\X$, with bounded envelope,
and for $B \in (0,\infty)$, let
\begin{equation*}
\F = B\conv(\C) = \left\{ x \mapsto B \sum_{j} \lambda_j h_j(x)  : \sum_{j} |\lambda_j| \leq 1, h_j \in \C\right\}
\end{equation*}
denote the scaled symmetric convex hull of $\C$;
then $\F$ is called a VC hull class. For instance, these spaces
are often used in conjunction with the popular AdaBoost learning
algorithm.
One can derive results for VC hull classes following
analogously to the above, using established bounds on the uniform covering numbers
of VC hull classes \citep*[see][Corollary 2.6.12]{van-der-Vaart:96},
and noting that for any VC hull class $\F$ with envelope function $\Env$, 
and any $\region \subseteq \X$, $\F_{\region}$ is also a VC hull class, 
with envelope function $\Env \ind_{\region}$.
Specifically, one can use these observations to derive the following results.
For a VC hull class $\F = B\conv(\C)$,
if $\loss$ is classification-calibrated and Lipschitz on $[-\sup_{h\in\F}\sup_{x\in\X}|h(x)|,\sup_{h\in\F}\sup_{x\in\X}|h(x)|]$,
$\target \in \F$, and $\PXY$ satisfies Condition~\ref{con:tsybakov-01} and Condition~\ref{con:tsybakov-sur}, then
letting $\dim = 2\vc(\C)$,
the conditions of Theorem~\ref{thm:abstract-active} can be satisfied with the probability bound having value at least $1-\conf$,
for some
$u = \tilde{O}\left( \left(\dc \eps^{\tsyba}\right)^{\frac{\dim}{\dim+2}} \BJM(\eps)^{\frac{2\tsybb}{\dim+2}-2}\right)$
and
$n = \tilde{O}\left( \left(\dc\eps^{\tsyba}\right)^{\frac{2\dim+2}{\dim+2}} \BJM(\eps)^{\frac{2\tsybb}{\dim+2}-2} \right)$.
Under Condition~\ref{con:strong-convexity},
with $\tsybb$ as in Lemma~\ref{lem:bjm-strong-convexity},
the conditions of Corollary~\ref{cor:abstract-conditionals}
can be satisfied with the probability being at least $1-\conf$,
for some
$u = \tilde{O}\left( \left(\frac{1}{\BJM(\eps)}\right)\left(\frac{\dc\eps^{\tsyba}}{\BJM(\eps)}\right)^{1-\frac{2\tsybb}{\dim+2}}\right)$
and
$n = \tilde{O}\left( \left(\frac{\dc\eps^{\tsyba}}{\BJM(\eps)}\right)^{2-\frac{2\tsybb}{\dim+2}}\right)$.
Compare these to the value $m = \tilde{O}\left( \BJM(\eps)^{\frac{2\tsybb}{\dim+2}-2} \right)$, 
derived analogously from Theorem~\ref{thm:erm},
sufficient for $\ERM_{\loss}(\F,\Data_{m})$ to achieve the same general guarantee \citep*[see also][]{blanchard:03,bartlett:06}.
However, it is not clear whether these results for active learning with VC hull classes have any practical implications,
since we do not know of any scenarios where this sufficient value of $m$ reflects a \emph{tight} analysis of $\ERM_{\loss}(\F,\cdot)$
while simultaneously being significantly larger than either of the above sufficient $n$ values.

\section{Computationally Efficient Updates}
\label{sec:hatT}

As mentioned in Section~\ref{subsec:hatT-spec}, though convenient in the sense that it offers a completely abstract and unified approach, 
the choice of $\hat{T}_{\loss}(V;Q,m)$ given by \eqref{eqn:hatT-acal-U} may often make \ACAL~computationally inefficient.
However, for each of the applications studied in this work, we can relax this $\hat{T}_{\loss}$ function to a computationally-accessible
value, which will then allow the algorithm to be efficient under convexity conditions on the loss and class of functions. 

In particular, in the application to VC Subgraph classes, Theorem~\ref{thm:vc-subgraph-abstract} remains valid 
if we instead define $\hat{T}_{\loss}$ as follows.
If we let $V^{(m)}$ and $Q_{m}$ denote the sets $V$ and $Q$ upon reaching Step 5 
for any given value of $m$ with $\log_{2}(m) \in \nats$ realized in \ACAL,
then consider defining $\hat{T}_{\loss}$ in Step 6 inductively by 
letting 
\begin{equation*}
\hat{\gamma}_{m/2} = \frac{8(|Q_{m/2}|\lor 1)}{m} \left(\hat{T}_{\loss}(V^{(m/2)};Q_{m/2},m/2) \land \maxloss\right)
\end{equation*}
(or $\hat{\gamma}_{m/2} = \maxloss$ if $m = 2$), 
and taking 
(with a slight abuse of notation to allow $\hat{T}_{\loss}$ to depend 
on sets $V^{(m^{\prime})}$ and $Q_{m^{\prime}}$ with $m^{\prime} < m$)
\begin{multline}
\label{eqn:vc-subgraph-abstract-hatT}
\hat{T}_{\loss}(V^{(m)}; Q_{m}, m) = 
\\ c_{0} \frac{m/2}{|Q_{m}| \lor 1} 
\Vast( \sqrt{ \hat{\gamma}_{m/2}^{\tsybb} \frac{\tsybcb}{m} \left( \vc(\G_{\F}) \Log\left( \frac{\maxloss (|Q_{m}|+\hat{\sfun}(m))}{m \tsybcb \hat{\gamma}_{m/2}^{\tsybb}} \right) + \hat{\sfun}(m) \right)}
\\ + \frac{\maxloss}{m} \left( \vc(\G_{\F}) \Log\left( \frac{\maxloss (|Q_{m}|+\hat{\sfun}(m))}{m \tsybcb \hat{\gamma}_{m/2}^{\tsybb}} \right) + \hat{\sfun}(m)\right) \Vast),
\end{multline}
for an appropriate universal constant $c_{0}$.
This value is essentially derived by bounding $\frac{m/2}{|Q|\lor 1}\tilde{U}_{\loss}(V_{\DIS(V)}; \PXY, m/2, \hat{\sfun}(m))$ 
(which is a bound on \eqref{eqn:hatT-acal-U} by Lemma~\ref{lem:koltchinskii}),
based on \eqref{eqn:vc-subgraph-phi-bound} and Condition~\ref{con:tsybakov-sur} 
(and a Chernoff bound to argue $|Q_{m}| \approx \Px(\DIS(V)) m/2$); 
since the sample sizes derived for $u$ and $n$ in Theorem~\ref{thm:vc-subgraph-abstract} 
are based on these relaxations anyway, they remain sufficient (with slight changes to the constant factors) 
for these relaxed $\hat{T}_{\loss}$ values.
We include a more detailed proof that these values of $\hat{T}_{\loss}$ suffice to achieve 
Theorem~\ref{thm:vc-subgraph-abstract} in Appendix~\ref{app:vc-subgraph-abstract-hatT}.
Note that we have introduced a dependence on $\tsybcb$ and $\tsybb$ in \eqref{eqn:vc-subgraph-abstract-hatT}.
These values would indeed be available for some applications, such as when they are derived from Lemma~\ref{lem:bjm-strong-convexity} when Condition~\ref{con:strong-convexity} is satisfied;
however, in other cases, there may be more-favorable values of $\tsybcb$ and $\tsybb$ than given by Lemma~\ref{lem:bjm-strong-convexity}, dependent on the specific $\PXY$ distribution, 
and in these cases direct observation of these values might not be available.
Thus, there remains an interesting open question of whether there exists a function $\hat{T}_{\loss}(V;Q,m)$, 
which is efficiently computable (under convexity assumptions) and yet preserves the validity of Theorem~\ref{thm:vc-subgraph-abstract}.

In the special case where Condition~\ref{con:strong-convexity} is satisfied, it is also possible to define a value for $\hat{T}_{\loss}$
that is computationally accessible, and preserves the validity of Theorem~\ref{thm:vc-subgraph-active-strong}.
Specifically, consider instead defining $\hat{T}_{\loss}$ in Step 6 as
\begin{multline}
\label{eqn:vc-subgraph-active-strong-hatT}
\hat{T}_{\loss}(V; Q, m) 
\\ =
\maxloss \land c_{0} \max
\begin{cases} 
\left( \frac{\tsybcb}{|Q| \lor 1} \left( \vc(\G_{\F}) \Log\left(\frac{\maxloss^{2}}{\tsybcb}\left(\frac{|Q|}{\tsybcb \vc(\G_{\F})}\right)^{\frac{\tsybb}{2-\tsybb}}\right) + \hat{\sfun}(m) \right)\right)^{\frac{1}{2-\tsybb}}
\\ \frac{\maxloss}{|Q| \lor 1} \left( \vc(\G_{\F}) \Log\left(\frac{\maxloss^{2}}{\tsybcb}\left(\frac{|Q|}{\maxloss \vc(\G_{\F})}\right)^{\tsybb}\right) + \hat{\sfun}(m) \right)
\end{cases},
\end{multline}
for $\tsybcb$ and $\tsybb$ as in Lemma~\ref{lem:bjm-strong-convexity},
and for an appropriate universal constant $c_{0}$.
This value is essentially derived (following \citealp*{koltchinskii:06}) by using Lemma~\ref{lem:koltchinskii} 
under the conditional distribution $\PXYR{\DIS(V)}$, in conjunction with 
a localization technique similar to that employed in the derivation of Theorem~\ref{thm:erm}.
Appendix~\ref{app:vc-subgraph-active-strong-efficient} includes a proof that the conclusions of Theorem~\ref{thm:vc-subgraph-active-strong}
remain valid for this specification of $\hat{T}_{\loss}$ in place of \eqref{eqn:hatT-acal-U}.
That these conclusions remain valid for this bound on excess conditional risks should not be too surprising, since Theorem~\ref{thm:vc-subgraph-active-strong} 
is itself proven by considering concentration under the conditional distributions $\PXYR{\U_{j}}$ via Corollary~\ref{cor:abstract-conditionals}.
Note that, unlike the analogous result for Theorem~\ref{thm:vc-subgraph-abstract} based on \eqref{eqn:vc-subgraph-abstract-hatT}
above, in this case all of the quantities in $\hat{T}_{\loss}(V;Q,m)$ are directly observable (in particular, $\tsybcb$ and $\tsybb$),
aside from any possible dependence arising in the specification of $\hat{\sfun}$.

It is also possible to define computationally tractable values of $\hat{T}_{\loss}(V;Q,m)$ in scenarios satisfying the 
entropy conditions (Condition~\ref{con:entropy}), while preserving the validity of Theorem~\ref{thm:ent-active}.
This substitution can be derived analogously to 
\eqref{eqn:vc-subgraph-abstract-hatT} above, this time leading to the definition
\begin{multline}
\label{eqn:ent-active-hatT}
\hat{T}_{\loss}\left(V^{(m)};Q_{m},m\right) = 
\\ c_{0} \frac{m/2}{|Q_{m}| \lor 1} \Vast(
\max\left\{\frac{\sqrt{\entc} \|\Env\|_{\PXY}^{\entrho} \left( \tsybcb \hat{\gamma}_{m/2}^{\tsybb} \right)^{\frac{1-\entrho}{2}}}{(1-\entrho) m^{1/2}}, \frac{\maxloss^{\frac{1-\entrho}{1+\entrho}} \entc^{\frac{1}{1+\entrho}} \|\Env\|_{\PXY}^{\frac{2\entrho}{1+\entrho}}}{(1-\entrho)^{\frac{2}{1+\entrho}} m^{\frac{1}{1+\entrho}}}\right\}
\\ + \sqrt{\tsybcb \hat{\gamma}_{m/2}^{\tsybb} \frac{\hat{\sfun}(m)}{m}} + \frac{\maxloss \hat{\sfun}(m)}{m}
\Vast),
\end{multline}
where $\hat{\gamma}_{m/2}$ is defined (inductively) as above, and $c_{0}$ is an appropriately large universal constant.
By essentially the same argument used for \eqref{eqn:vc-subgraph-abstract-hatT} (see Appendix~\ref{app:vc-subgraph-abstract-hatT}), 
one can show that using \eqref{eqn:ent-active-hatT} in place of \eqref{eqn:hatT-acal-U} preserves the validity of Theorem~\ref{thm:ent-active}; 
for brevity, the details are omitted.

In the case that Condition~\ref{con:strong-convexity} and \eqref{eqn:uniform-entropy-bound} are satisfied,
it is possible to define a computationally accessible quantity $\hat{T}_{\loss}(V;Q,m)$, while preserving the validity of Theorem~\ref{thm:uniform-entropy-strong-convexity}.
Specifically, following the same reasoning used to arrive at \eqref{eqn:vc-subgraph-active-strong-hatT},
except using \eqref{eqn:ent-phi-bound} instead of \eqref{eqn:vc-subgraph-phi-bound}, we find that while replacing \eqref{eqn:hatT-acal-U} 
with the definition
\begin{multline}
\label{eqn:uniform-entropy-strong-convexity-hatT}
\hat{T}_{\loss}\left(V;Q,m\right) =
\\ \maxloss \land c_{0} \vast(
\max\left\{\left(\frac{\entc \maxloss^{2\entrho} \tsybcb^{1-\entrho}}{(1-\entrho)^{2} (|Q| \lor 1)}\right)^{\frac{1}{2-\tsybb(1-\entrho)}}, 
\frac{\maxloss \entc^{\frac{1}{1+\entrho}}}{(1-\entrho)^{\frac{2}{1+\entrho}} (|Q| \lor 1)^{\frac{1}{1+\entrho}}}\right\}
\\ + \left(\frac{\tsybcb \hat{\sfun}(m)}{|Q| \lor 1}\right)^{\frac{1}{2-\tsybb}} + \frac{\maxloss \hat{\sfun}(m)}{|Q| \lor 1} \vast),
\end{multline}
for $\tsybcb$ and $\tsybb$ as in Lemma~\ref{lem:bjm-strong-convexity}
and for an appropriate universal constant $c_{0}$,
the conclusions of Theorem~\ref{thm:uniform-entropy-strong-convexity} remain valid.
The proof follows similarly to the proof (in Appendix~\ref{app:vc-subgraph-active-strong-efficient})
that \eqref{eqn:vc-subgraph-active-strong-hatT} preserves the validity of Theorem~\ref{thm:vc-subgraph-active-strong},
and is omitted for brevity. 

Finally, in the case that Condition~\ref{con:strong-convexity} and \eqref{eqn:bracketing-entropy-bound} are satisfied, 
we can again derive an efficiently computable value of $\hat{T}_{\loss}(V;Q,m)$, which in this case 
preserves the validity of Theorem~\ref{thm:bracketing-entropy-strong-convexity}.
Specifically, noting that the reasoning preceding Theorem~\ref{thm:bracketing-entropy-strong-convexity} also implies 
$\ln \covering_{[]}\left( \eps \maxloss, \G_{V}, L_{2}(\PXYR{\DIS(V)})\right) \leq \entc \Px(\DIS(V))^{-\entrho} \eps^{-2\entrho}$,
and following the reasoning leading to \eqref{eqn:uniform-entropy-strong-convexity-hatT} while replacing $\entc$ with $\entc \Px(\DIS(V))^{-\entrho}$,
combined with a Chernoff bound to argue $\Px(\DIS(V)) \approx 2|Q|/m$ in the algorithm, we find that 
Theorem~\ref{thm:bracketing-entropy-strong-convexity} remains valid after replacing \eqref{eqn:hatT-acal-U} with the definition
\begin{multline*}
\hat{T}_{\loss}(V;Q,m) = 
\\ \maxloss \land c_{0} \vast(
\max\left\{\left(\frac{\entc m^{\entrho} \maxloss^{2\entrho} \tsybcb^{1-\entrho}}{(1-\entrho)^{2} (|Q| \lor 1)^{1+\entrho}}\right)^{\frac{1}{2-\tsybb(1-\entrho)}}, 
\frac{\maxloss \entc^{\frac{1}{1+\entrho}} m^{\frac{\entrho}{1+\entrho}}}{(1-\entrho)^{\frac{2}{1+\entrho}} (|Q| \lor 1)}\right\}
\\ + \left(\frac{\tsybcb \hat{\sfun}(m)}{|Q| \lor 1}\right)^{\frac{1}{2-\tsybb}} + \frac{\maxloss \hat{\sfun}(m)}{|Q| \lor 1} \vast),
\end{multline*}
for an appropriate universal constant $c_{0}$, and where $\tsybcb$ and $\tsybb$ are as in Lemma~\ref{lem:bjm-strong-convexity}.
The proof is essentially similar to that given for \eqref{eqn:vc-subgraph-active-strong-hatT} in Appendix~\ref{app:vc-subgraph-active-strong-efficient},
and is omitted for brevity. 

\section{Proofs for Efficiently Computable Updates}
\label{app:efficient-hatTs}

Here we include more detailed proofs of the arguments leading to computationally efficient variants of \ACAL,
for which the specific results proven in this work for the given applications remain valid.
Specifically, we focus on the application to VC Subgraph classes here; the applications
to scenarios satisfying the entropy conditions follow analogously.
Throughout this section, we adopt the notational conventions introduced in the proof of 
Theorem~\ref{thm:abstract-active} (e.g., $V^{(m)}$, $\tilde{V}^{(m)}$, $Q_{m}$, $\L_{m}$, $S$),
except in each instance here these are defined in the context of applying \ACAL~with 
the respective stated variant of $\hat{T}_{\loss}$.

\subsection{Proof of Theorem~\ref{thm:vc-subgraph-abstract} under \eqref{eqn:vc-subgraph-abstract-hatT}}
\label{app:vc-subgraph-abstract-hatT}

We begin by showing that if we specify $\hat{T}_{\loss}(V;Q,m)$ as in \eqref{eqn:vc-subgraph-abstract-hatT}, 
the conclusions of Theorem~\ref{thm:vc-subgraph-abstract} remain valid.
Fix any $\hat{\sfun}$ function (to be specified below), and fix any value of $\eps \in (0,1)$.
First note that, for any $m$ with $\log_{2}(m) \in \nats$, by a Chernoff bound and the law of total probability, 
on an event $E_{m}^{\prime\prime}$ of probability at least $1-2^{1-\hat{\sfun}(m)}$, if $m \in S$, then
\begin{equation}
\label{eqn:vc-subgraph-abstract-hatT-Q-bounds}
(1/2) m \Px(\DISV_{m}) - \sqrt{\hat{\sfun}(m) m \Px(\DISV_{m})} \leq |Q_{m}| \leq \hat{\sfun}(m) + e m \Px(\DISV_{m}).
\end{equation}
Also recall that, for any $m$ with $\log_{2}(m) \in \nats$, 
by Lemma~\ref{lem:koltchinskii} and the law of total probability, on an event $E_{m}$ of probability at least $1 - 6e^{-\hat{\sfun}(m)}$,
if $m \in S$ and $\target \in V^{(m)}$, then 
\begin{multline}
\label{eqn:vc-subgraph-abstract-hatT-kolt-1}
(|Q_{m}| \lor 1)\left(\Risk(\target; Q_{m}) - \inf_{g \in V^{(m)}} \Risk(g; Q_{m})\right)
\\ = \frac{m}{2}\left( \Risk(\target; \L_{m}) - \inf_{g_{\DISV_{m}} \in V^{(m)}_{\DISV_{m}}} \Risk(g_{\DISV_{m}}; \L_{m}) \right)
\\ < \frac{m}{2}\tilde{U}_{\loss}\left( V^{(m)}_{\DISV_{m}}; \PXY, m/2, \hat{\sfun}(m) \right)
\end{multline}
and $\forall h \in \tilde{V}^{(m)}$,
\begin{align}
& \frac{m}{2} \left( \Risk(h_{\DISV_{m}}) - \Risk(\target) \right) \notag
\\ & < \frac{m}{2} \left( \Risk(h_{\DISV_{m}}; \L_{m}) - \Risk(\target; \L_{m}) + \tilde{U}_{\loss}\left( V^{(m)}_{\DISV_{m}}; \PXY, m/2, \hat{\sfun}(m) \right) \land \maxloss \right) \notag
\\ & = |Q_{m}|\left( \Risk(h; Q_{m}) - \Risk(\target; Q_{m}) \right) + \frac{m}{2} \left(\tilde{U}_{\loss}\left( V^{(m)}_{\DISV_{m}}; \PXY, m/2, \hat{\sfun}(m) \right) \land \maxloss\right) \notag
\\ & \leq (|Q_{m}| \lor 1) \hat{T}_{\loss}\left( V^{(m)}; Q_{m}, m \right) + \frac{m}{2} \left(\tilde{U}_{\loss}\left( V^{(m)}_{\DISV_{m}}; \PXY, m/2, \hat{\sfun}(m) \right) \land \maxloss\right). \label{eqn:vc-subgraph-abstract-hatT-kolt-2}
\end{align}

Fix a value $i_{\eps} \in \nats$ (an appropriate value for which will be determined below),
and let $\mixcap = \mixcap( \BJM(\eps) )$.
For $m \in \nats$ with $\log_{2}(m) \in \nats$, let 
\begin{multline*}
\tilde{T}_{\loss}(m) = 
c_{2} \left( \frac{\tsybcb}{m} \left( \vc(\G_{\F}) \Log(\mixcap\maxloss) + \hat{\sfun}(m) \right) \right)^{\frac{1}{2-\tsybb}} 
\\ + c_{2} \frac{\maxloss}{m} \left( \vc(\G_{\F}) \Log(\mixcap\maxloss) + \hat{\sfun}(m) \right),
\end{multline*}
for an appropriate universal constant $c_{2} \in [1,\infty)$ (to be determined below);
for completeness, also define $\tilde{T}_{\loss}(1) = \maxloss$.
We will now prove by induction that, for an appropriate value of the constant $c_{0}$ in \eqref{eqn:vc-subgraph-abstract-hatT}, 
for any $m^{\prime}$ with $\log_{2}(m^{\prime}) \!\in\! \{1,\ldots,i_{\eps}\}$,
on the event $\bigcap_{i = 1}^{\log_{2}(m^{\prime})-1} E_{2^{i}} \cap E_{2^{i+1}}^{\prime\prime}$,
if $m^{\prime} \in S$, then $\target \in V^{(m^{\prime})}$, 
\begin{equation*}
V^{(m^{\prime})}_{\DISV_{m^{\prime}}} \subseteq \Bracket{\F}( \hat{\gamma}_{m^{\prime}/2}; \loss ) \subseteq \Bracket{\F}( 2 \tilde{T}_{\loss}(m^{\prime}/2) \lor \BJM(\eps); \loss),
\end{equation*} 
\begin{equation*}
V^{(m^{\prime})} \subseteq \F( \InvTransform( \hat{\gamma}_{m^{\prime}/2} ) ; \zo ) \subseteq \F( \InvTransform( 2 \tilde{T}_{\loss}(m^{\prime}/2) \lor \BJM(\eps)); \zo ),
\end{equation*}
\begin{equation*}
\tilde{U}_{\loss}\left( V^{(m^{\prime})}_{\DISV_{m^{\prime}}}; \PXY, m^{\prime}/2, \hat{\sfun}(m^{\prime}) \right) \land \maxloss
\leq \frac{|Q_{m^{\prime}}| \lor 1}{m^{\prime}/2} \left(\hat{T}_{\loss}\left( V^{(m^{\prime})}; Q_{m^{\prime}}, m^{\prime} \right) \land \maxloss\right),
\end{equation*}
and if $\hat{\gamma}_{m^{\prime}/2} \geq \BJM(\eps)$,
\begin{equation*}
\frac{|Q_{m^{\prime}}| \lor 1}{m^{\prime}/2} \left(\hat{T}_{\loss}\left( V^{(m^{\prime})}; Q_{m^{\prime}}, m^{\prime} \right) \land \maxloss\right)
\leq \tilde{T}_{\loss}(m^{\prime}).
\end{equation*}
As a base case for this inductive argument, we note that for $m^{\prime} = 2$, 
we have (by definition) $\hat{\gamma}_{m^{\prime}/2} = \maxloss$, and furthermore (if $c_{0} \land c_{2} \geq 2$)
$\hat{T}_{\loss}( V^{(2)}; Q_{2}, 2 ) \geq \maxloss$ and $\tilde{T}_{\loss}(1) \geq \maxloss$, 
so that the claimed inclusions and inequalities trivially hold.
Now, for the inductive step, take as an inductive hypothesis that the claim is satisfied for $m^{\prime} = m$ 
for some $m \in \nats$ with $\log_{2}(m) \in \{1,\ldots,i_{\eps}-1\}$.  Suppose the event 
$\bigcap_{i=1}^{\log_{2}(m)} E_{2^{i}} \cap E_{2^{i+1}}^{\prime\prime}$ occurs, and that $2m \in S$.
By the inductive hypothesis, combined with \eqref{eqn:vc-subgraph-abstract-hatT-kolt-1} and the fact that 
$(|Q_{m}| \lor 1) \Risk(\target; Q_{m}) \leq (m/2) \maxloss$, we have 
\begin{multline*}
(|Q_{m}| \lor 1)\left(\Risk(\target; Q_{m}) - \inf_{g \in V^{(m)}} \Risk(g; Q_{m})\right)
\\ \leq \frac{m}{2} \left(\tilde{U}_{\loss}\left( V^{(m)}_{\DISV_{m}}; \PXY, m/2, \hat{\sfun}(m) \right) \land \maxloss \right)
\leq (|Q_{m}| \lor 1) \hat{T}_{\loss}\left( V^{(m)}; Q_{m}, m \right).
\end{multline*}
Therefore, $\target \in \tilde{V}^{(m)}$ as well, which implies $\target \in V^{(2m)} = \tilde{V}^{(m)}$.
Furthermore, by \eqref{eqn:vc-subgraph-abstract-hatT-kolt-2}, the inductive hypothesis, and the definition of $\tilde{V}^{(m)}$ from Step 6,
$\forall h \in V^{(2m)} = \tilde{V}^{(m)}$,
\begin{equation*}
\Risk(h_{\DISV_{m}}) - \Risk(\target)
< 2 \frac{|Q_{m}| \lor 1}{m/2} \left(\hat{T}_{\loss}\left( V^{(m)}; Q_{m}, m \right) \land \maxloss \right),
\end{equation*}
and if $\hat{\gamma}_{m/2} \geq \BJM(\eps)$, then this is at most $2 \tilde{T}_{\loss}(m)$.

Since $\hat{\gamma}_{m} = 2 \frac{|Q_{m}| \lor 1}{m/2} \left(\hat{T}_{\loss}\left( V^{(m)}; Q_{m}, m \right) \land \maxloss\right)$, 
and $\Risk(h_{\DISV_{2m}}) \leq \Risk(h_{\DISV_{m}})$ for every $h \in V^{(2m)}$,
we have $V^{(2m)}_{\DISV_{2m}} \subseteq \Bracket{\F}( \hat{\gamma}_{m}; \loss ) \subseteq \Bracket{\F}( 2 \tilde{T}_{\loss}(m) \lor \BJM(\eps); \loss )$.
By definition of $\InvTransform(\cdot)$, we also have 
$\er(h_{\DISV_{2m}}) - \er(\target) \leq \InvTransform( \hat{\gamma}_{m} )$ for every $h \in V^{(2m)}$;
since $\target \in V^{(2m)}$, we have $\sign(h_{\DISV_{2m}}) = \sign(h)$, so that 
$\er(h) - \er(\target) \leq \InvTransform( \hat{\gamma}_{m} )$ as well: that is, 
$V^{(2m)} \subseteq \F( \InvTransform( \hat{\gamma}_{m} ); \zo ) \subseteq \F( \InvTransform( 2 \tilde{T}_{\loss}(m) \lor \BJM(\eps) ); \zo )$.
Combining these facts with \eqref{eqn:ophi-2}, \eqref{eqn:vc-subgraph-phi-bound}, Condition~\ref{con:tsybakov-sur}, 
monotonicity of $\vc(\G_{\H_{\region}})$ in both $\region$ and $\H$, 
and the fact that $\| \Env(\G_{V^{(2m)}_{\DISV_{2m}},\PXY}) \|_{\PXY}^{2} \leq \maxloss^{2} \Px(\DISV_{2m})$,
we have that
\begin{multline}
\label{eqn:vc-subgraph-abstract-hatT-tilde-bound}
\tilde{U}_{\loss}\left(V^{(2m)}_{\DISV_{2m}};\PXY,m,\hat{\sfun}(2m)\right) 
\leq c_{1}\sqrt{ \tsybcb \hat{\gamma}_{m}^{\tsybb} \frac{\vc(\G_{\F}) \Log\left( \frac{\maxloss \Px(\DISV_{2m})}{\tsybcb \hat{\gamma}_{m}^{\tsybb}} \right) + \hat{\sfun}(2m)}{m} } 
\\ + c_{1} \maxloss \frac{\vc(\G_{\F}) \Log\left( \frac{\maxloss \Px(\DISV_{2m})}{\tsybcb \hat{\gamma}_{m}^{\tsybb}} \right) + \hat{\sfun}(2m)}{m},
\end{multline}
for some universal constant $c_{1} \in [1,\infty)$.
By \eqref{eqn:vc-subgraph-abstract-hatT-Q-bounds}, we have $\Px(\DISV_{2m}) \leq \frac{3}{m}( |Q_{2m}| + \hat{\sfun}(2m) )$,
so that the right hand side of \eqref{eqn:vc-subgraph-abstract-hatT-tilde-bound} is at most
\begin{multline*}
c_{1}\sqrt{ \tsybcb \hat{\gamma}_{m}^{\tsybb} \frac{\vc(\G_{\F}) \Log\left( \frac{\maxloss 6 ( |Q_{2m}| + \hat{\sfun}(2m) )}{2m \tsybcb \hat{\gamma}_{m}^{\tsybb}} \right) + \hat{\sfun}(2m)}{m} } 
\\ + c_{1} \maxloss \frac{\vc(\G_{\F}) \Log\left( \frac{\maxloss 6 (|Q_{2m}| + \hat{\sfun}(2m))}{2m \tsybcb \hat{\gamma}_{m}^{\tsybb}} \right) + \hat{\sfun}(2m)}{m}
\\ \leq
8c_{1}\sqrt{ \tsybcb \hat{\gamma}_{m}^{\tsybb} \frac{\vc(\G_{\F}) \Log\left( \frac{\maxloss ( |Q_{2m}| + \hat{\sfun}(2m) )}{2m \tsybcb \hat{\gamma}_{m}^{\tsybb}} \right) + \hat{\sfun}(2m)}{2m} } 
\\ + 8 c_{1} \maxloss \frac{\vc(\G_{\F}) \Log\left( \frac{\maxloss (|Q_{2m}| + \hat{\sfun}(2m))}{2m \tsybcb \hat{\gamma}_{m}^{\tsybb}} \right) + \hat{\sfun}(2m)}{2m}.
\end{multline*}
Thus, if we take $c_{0} = 8 c_{1}$ in the definition of $\hat{T}_{\loss}$ in \eqref{eqn:vc-subgraph-abstract-hatT},
then we have 
\begin{equation*}
\tilde{U}_{\loss}\left( V^{(2m)}_{\DISV_{2m}}; \PXY, m, \hat{\sfun}(2m) \right) \land \maxloss
\leq \frac{|Q_{2m}| \lor 1}{m} \left(\hat{T}_{\loss}\left( V^{(2m)}; Q_{2m}, 2m \right) \land \maxloss\right).
\end{equation*}
Furthermore, \eqref{eqn:vc-subgraph-abstract-hatT-Q-bounds} implies $|Q_{2m}| \leq \hat{\sfun}(2m) + 2 e m \Px(\DISV_{2m})$.
In particular, if $\hat{\sfun}(2m)$ $> 2 e m \Px(\DISV_{2m})$, then 
\begin{equation*}
\frac{|Q_{2m}| \lor 1}{m} \left(\hat{T}_{\loss}\left( V^{(2m)}; Q_{2m}, 2m \right) \land \maxloss\right)
\leq \frac{\hat{\sfun}(2m) + 2 e m \Px(\DISV_{2m})}{m} \maxloss 
\leq \frac{2 \hat{\sfun}(2m) \maxloss}{m},
\end{equation*}
and taking any $c_{2} \geq 4$ guarantees this last quantity is at most $\tilde{T}_{\loss}(2m)$.
On the other hand, if $\hat{\sfun}(2m) \leq 2 e m \Px(\DISV_{2m})$, then $|Q_{2m}| \leq 4 e m \Px(\DISV_{2m})$,
and we have already established that $V^{(2m)} \subseteq \F( \InvTransform( \hat{\gamma}_{m} ); \zo )$, so that
\begin{multline}
\label{eqn:vc-subgraph-abstract-hatT-hatT-bound}
\frac{|Q_{2m}| \lor 1}{m} \left(\hat{T}_{\loss}\left( V^{(2m)}; Q_{2m}, 2m \right) \land \maxloss\right)
\\ \leq 
8c_{1}\sqrt{ \tsybcb \hat{\gamma}_{m}^{\tsybb} \frac{\vc(\G_{\F}) \Log\left( \frac{\maxloss 3 e \Px(\DIS(\F(\InvTransform(\hat{\gamma}_{m});\zo)))}{\tsybcb \hat{\gamma}_{m}^{\tsybb}} \right) + \hat{\sfun}(2m)}{2m} } 
\\ + 8 c_{1} \maxloss \frac{\vc(\G_{\F}) \Log\left( \frac{\maxloss 3 e \Px(\DIS(\F(\InvTransform(\hat{\gamma}_{m});\zo)))}{\tsybcb \hat{\gamma}_{m}^{\tsybb}} \right) + \hat{\sfun}(2m)}{2m}.
\end{multline}
If $\hat{\gamma}_{m} \geq \BJM(\eps)$, then this is at most 
\begin{multline*}
8c_{1}\left(\sqrt{ \tsybcb \hat{\gamma}_{m}^{\tsybb} \frac{\vc(\G_{\F}) \Log\left( 3 e \mixcap\maxloss\right) + \hat{\sfun}(2m)}{2m} } 
+ \maxloss \frac{\vc(\G_{\F}) \Log\left( 3 e \mixcap\maxloss \right) + \hat{\sfun}(2m)}{2m}\right)
\\ \leq 
48c_{1}\left(\sqrt{ \tsybcb \hat{\gamma}_{m}^{\tsybb} \frac{\vc(\G_{\F}) \Log\left( \mixcap\maxloss\right) + \hat{\sfun}(2m)}{2m} } 
+ \maxloss \frac{\vc(\G_{\F}) \Log\left( \mixcap\maxloss \right) + \hat{\sfun}(2m)}{2m}\right).
\end{multline*}
For brevity, let $K = \frac{\vc(\G_{\F}) \Log( \mixcap\maxloss ) + \hat{\sfun}(2m)}{2m}$.
As argued above, $\hat{\gamma}_{m} \leq 2 \tilde{T}_{\loss}(m)$, so that the right hand side of the above inequality is at most
\begin{equation*}
48 \sqrt{2} c_{1} \left(\sqrt{ \tsybcb \tilde{T}_{\loss}(m)^{\tsybb} K } 
+ \maxloss K \right).
\end{equation*}
Then since $\hat{\sfun}(m) \leq 2 \hat{\sfun}(2m)$,
the above expression is at most
\begin{equation}
\label{eqn:vc-subgraph-abstract-hatT-intermediate-tildeT}
48 \cdot 4 c_{1} \sqrt{c_{2}} \left(\sqrt{ \tsybcb \left( ( \tsybcb K )^{\frac{1}{2-\tsybb}} \lor \maxloss K \right)^{\tsybb} K } 
+ \maxloss K \right).
\end{equation}
If $\maxloss K \leq (\tsybcb K)^{\frac{1}{2-\tsybb}}$, then \eqref{eqn:vc-subgraph-abstract-hatT-intermediate-tildeT} is equal
\begin{equation*}
48 \cdot 4 c_{1} \sqrt{c_{2}} \left( ( \tsybcb K )^{\frac{1}{2-\tsybb}} + \maxloss K \right).
\end{equation*}
On the other hand, if $\maxloss K > (\tsybcb K)^{\frac{1}{2-\tsybb}}$, then \eqref{eqn:vc-subgraph-abstract-hatT-intermediate-tildeT} is equal
\begin{multline*}
48 \cdot 4 c_{1} \sqrt{c_{2}} \left(\sqrt{ \tsybcb K ( \maxloss K )^{\tsybb} } + \maxloss K \right)
\\ < 48 \cdot 4 c_{1} \sqrt{c_{2}} \left(\sqrt{ ( \maxloss K )^{2-\tsybb} ( \maxloss K )^{\tsybb} } + \maxloss K \right)
= 48 \cdot 8 c_{1} \sqrt{c_{2}} \maxloss K.
\end{multline*}
In all of the above cases, taking $c_{2} = 9 \cdot 2^{14} c_{1}^{2}$ in the definition of $\tilde{T}_{\loss}$ yields
\begin{equation*}
\frac{|Q_{2m}| \lor 1}{m} \left(\hat{T}_{\loss}\left( V^{(2m)}; Q_{2m}, 2m \right) \land \maxloss\right)
\leq \tilde{T}_{\loss}(2m).
\end{equation*}
This completes the inductive step, so that we have proven that the claim holds for all $m^{\prime}$ with $\log_{2}(m^{\prime}) \in \{1,\ldots,i_{\eps}\}$.

Let $j_{\loss} = - \lceil \log_{2}(\maxloss) \rceil$, $\tilde{j}_{\eps} = \lceil \log_{2}(1/\BJM(\eps)) \rceil$,
and for each $j \in \{j_{\loss},\ldots,\tilde{j}_{\eps}\}$, let
$s_{j} = \log_{2}\left( \frac{144 (2+\tilde{j}_{\eps}-j)^{2}}{\conf} \right)$, define
\begin{equation*}
m_{j}^{\prime} = 32 c_{2}^{2} \left(\tsybcb 2^{j(2-\tsybb)} + \maxloss 2^{j} \right)\left( \vc(\G_{\F}) \Log(\mixcap\maxloss) + s_{j} \right),
\end{equation*}
and let $m_{j} = 2^{\lceil \log_{2}(m_{j}^{\prime}) \rceil}$.
Also define $m_{j_{\loss}-1} = 1$.
Using this notation, we can now define the relevant values of the $\hat{\sfun}$ function as follows.
For each $j \in \{j_{\loss},\ldots,\tilde{j}_{\eps}\}$, and each $m \in \{m_{j-1}+1,\ldots,m_{j}\}$ with $\log_{2}(m) \in \nats$, define
\begin{equation*}
\hat{\sfun}(m) = \log_{2}\left( \frac{16 \log_{2}(4m_{j}/m)^{2} (2 + \tilde{j}_{\eps} - j)^{2}}{\conf} \right).
\end{equation*}

In particular, taking $i_{\eps} = \log_{2}(m_{\tilde{j}_{\eps}})$,
we have that $2\tilde{T}_{\loss}(2^{i_{\eps}-1}) \leq \BJM(\eps)$, so that on the event 
$\bigcap_{i=1}^{i_{\eps}-1} E_{2^{i}} \cap E_{2^{i+1}}^{\prime\prime}$, 
if we have $2^{i_{\eps}} \in S$, 
then $\hat{h} \in V^{(2^{i_{\eps}})} \!\subseteq\! \F( \InvTransform( 2 \tilde{T}_{\loss}(2^{i_{\eps}-1}) \lor \BJM(\eps));\zo) \!=\! \F( \InvTransform( \BJM(\eps) ) ; \zo ) \!\subseteq\! \F( \BJM^{-1}( \BJM(\eps) ) ; \zo ) \!= \! \F( \eps ; \zo )$,
so that $\er(\hat{h}) - \er(\target) \leq \eps$.

Furthermore, we established above that, on the event $\bigcap_{i=1}^{i_{\eps}-1} E_{2^{i}} \cap E_{2^{i+1}}^{\prime\prime}$,
for every $j \in \{j_{\loss},\ldots,\tilde{j}_{\eps}\}$ with $m_{j} \in S$, and every $m \in \{m_{j-1}+1,\ldots,m_{j}\}$ with $\log_{2}(m) \in \nats$,
$V^{(m)} \subseteq \F( \InvTransform( 2 \tilde{T}_{\loss}(m/2) \lor \BJM(\eps)); \zo ) \subseteq \F( \InvTransform( 2 \tilde{T}_{\loss}(m_{j-1}) \lor \BJM(\eps) ) ; \zo )$.
Noting that $2 \tilde{T}_{\loss}(m_{j-1}) \leq 2^{1-j}$, we have 
\begin{equation*}
\sum_{m \in S : m \leq m_{\tilde{j}_{\eps}}} |Q_{m}|
\leq \sum_{j=j_{\loss}}^{\tilde{j}_{\eps}} \sum_{m=m_{j-1}+1}^{m_{j}} \ind_{\DIS(\F( \InvTransform( 2^{1-j} ) ; \zo ))}(X_{m}).
\end{equation*}
A Chernoff bound implies that, on an event $E^{\prime}$ of probability at least $1-\conf/2$, the right hand side of the above inequality is at most
\begin{multline*}
\log_{2}(2/\conf) + 2 e \sum_{j=j_{\loss}}^{\tilde{j}_{\eps}} (m_{j} - m_{j-1}) \Px(\DIS(\F( \InvTransform( 2^{1-j} ) ; \zo )))
\\ \leq \log_{2}(2/\conf) + 2 e \sum_{j=j_{\loss}}^{\tilde{j}_{\eps}} m_{j} \Px(\DIS(\F( \BJM^{-1}( 2^{1-j} ) ; \zo ))).
\end{multline*}
By essentially the same reasoning used in the proof of Theorem~\ref{thm:vc-subgraph-abstract}, the right hand side of this inequality is
\begin{equation*}
\lesssim \tsybca \dc \eps^{\tsyba} \left( \frac{\tsybcb (A_{1}+\Log(B_{1}))B_{1}}{\BJM(\eps)^{2-\tsybb}} + \frac{\maxloss( A_{1} + \Log(C_{1}) ) C_{1}}{\BJM(\eps)} \right).
\end{equation*}
Since
\begin{equation*}
m_{\tilde{j}_{\eps}} \lesssim \left(\frac{\tsybcb}{\BJM(\eps)^{2-\tsybb}} + \frac{\maxloss}{\BJM(\eps)}\right) A_{1},
\end{equation*}
the conditions on $u$ and $n$ stated in Theorem~\ref{thm:vc-subgraph-abstract} (with an appropriate constant $c$) suffice 
to guarantee $\er(\hat{h}) - \er(\target) \leq \eps$ on the event $E^{\prime} \cap \bigcap_{i=1}^{i_{\eps}-1} E_{2^{i}} \cap E_{2^{i+1}}^{\prime\prime}$.
Finally, the proof is completed by noting that a union bound implies the event $E^{\prime} \cap \bigcap_{i=1}^{i_{\eps}-1} E_{2^{i}} \cap E_{2^{i+1}}^{\prime\prime}$ has probability 
at least 
\begin{align*}
& 1 - \frac{\conf}{2} - \sum_{i=1}^{i_{\eps}-1} 2^{1-\hat{\sfun}(2^{i+1})} + 6 e^{-\hat{\sfun}(2^{i})} 
\\ & \geq 1 - \frac{\conf}{2} - \sum_{j=j_{\loss}}^{\tilde{j}_{\eps}} \sum_{i=\log_{2}(m_{j-1})+1}^{\log_{2}(m_{j})} \frac{\conf}{2 (2 + \log_{2}(m_{j}) - i)^{2} (2 + \tilde{j}_{\eps} - j)^{2}}
\\ & \geq 1 - \frac{\conf}{2} - \sum_{j=j_{\loss}}^{\tilde{j}_{\eps}} \sum_{k = 0}^{\infty} \frac{\conf}{2 (2+k)^{2} (2 + \tilde{j}_{\eps} - j)^{2}}
\\ & \geq 1 - \frac{\conf}{2} - \sum_{j=j_{\loss}}^{\tilde{j}_{\eps}} \frac{\conf}{2 (2 + \tilde{j}_{\eps} - j)^{2}}
\geq 1 - \frac{\conf}{2} - \sum_{t=0}^{\infty} \frac{\conf}{2 (2 + t)^{2}}
\geq 1 - \conf.
\end{align*}

Note that, as in Theorem~\ref{thm:vc-subgraph-abstract}, the function $\hat{\sfun}$ in this proof has a direct dependence on $\tsybca$, $\tsyba$, and $\mixcap$, 
in addition to $\tsybcb$ and $\tsybb$.  As before, with an alternative definition of $\hat{\sfun}$, 
similar to that mentioned in the discussion following the proof of Theorem~\ref{thm:vc-subgraph-abstract}, 
it is possible to remove this dependence, at the expense of the same logarithmic factors mentioned above.

\subsection{Proof of Theorem~\ref{thm:vc-subgraph-active-strong} under \eqref{eqn:vc-subgraph-active-strong-hatT}}
\label{app:vc-subgraph-active-strong-efficient}

Next, consider the conditions of Theorem~\ref{thm:vc-subgraph-active-strong}, 
and suppose the definition of $\hat{T}_{\loss}$ from \eqref{eqn:vc-subgraph-active-strong-hatT} is used in Step 6.
For simplicity, we let $V^{(m)}$ and $Q_{m}$ be defined (though arbitrarily) even when $m \notin S$. 
Fix a function $\hat{\sfun}$ (to be specified below) and any value of $\eps \in (0,1)$.
We will prove by induction that there exist events $\hat{E}_{m^{\prime}}$, for values $m^{\prime}$ with $\log_{2}(m^{\prime}) \in \nats$,
each with respective probability at least $1 - 12 e^{-\hat{\sfun}(m^{\prime})}$ such that,
for every $m$ with $\log_{2}(m) \in \nats$, on $\bigcap_{i=1}^{\log_{2}(m)} \hat{E}_{2^{i}}$,
if $m \in S$, we have that $\target \in \tilde{V}^{(m)}$ and $\tilde{V}^{(m)} \subseteq V^{(m)}\left( 4\hat{T}_{m}; \loss, \PXYR{\DISV_{m}} \right)$,
where $\hat{T}_{m} = \hat{T}_{\loss}\left(V^{(m)};Q_{m},m\right)$.
This claim is trivially satisfied for $m=2$, since $\hat{T}_{2}=\maxloss$, so this will serve as our base case in the inductive proof.
Now fix any $m > 2$ with $\log_{2}(m) \in \nats$, and take as an inductive hypothesis that there exist events $\hat{E}_{m^{\prime}}$ 
for each $m^{\prime} < m$ with $\log_{2}(m^{\prime}) \in \nats$,
such that, on $\bigcap_{i=1}^{\log_{2}(m)-1} \hat{E}_{2^{i}}$, if $m/2 \in S$, then $\target \in \tilde{V}^{(m/2)}$.
Note that, since $V^{(m)} = \tilde{V}^{(m/2)}$ (if $m \in S$), we have that $\target \in V^{(m)}$ on $\bigcap_{i=1}^{\log_{2}(m)-1} \hat{E}_{2^{i}}$ by the inductive hypothesis. 

For any $T > 0$, let $\sfun\left(T,\gamma\right) = \Log\left( \frac{\gamma}{T} \right)+\hat{\sfun}(m)$.
Note that \eqref{eqn:oU-vs-tildeU}, \eqref{eqn:spec-split-bound}, \eqref{eqn:restinv-bound}, Lemma~\ref{lem:bjm-strong-convexity}, \eqref{eqn:vc-subgraph-pre-phiinv}, and monotonicity of $\H \mapsto \vc(\G_{\H})$ imply that,
if $\target \in V^{(m)} \subseteq \F$, then
\begin{multline}
\label{eqn:vc-strong-efficient-M-bound}
\sup_{\gamma \geq T} \tilde{M}_{\loss}\left( \gamma/8, \gamma; V^{(m)}, \PXYR{\DISV_{m}}, \sfun(T,\gamma) \right)
\\ \leq
\bar{c} \left( \frac{\tsybcb}{T^{2-\tsybb}} + \frac{\maxloss}{T} \right) \left( \vc(\G_{\F}) \Log\left( \frac{\maxloss^{2}}{\tsybcb T^{\tsybb}} \right) + \hat{\sfun}(m) \right),
\end{multline}
for an appropriate finite universal constant $\bar{c} \geq 1$.
If $m \in S$ and $\hat{T}_{m} = \maxloss$, then we trivially have $\Risk(\target;Q_{m}) - \inf_{g \in V^{(m)}} \Risk(g;Q_{m}) \leq \hat{T}_{m}$, so that $\target \in \tilde{V}^{(m)}$,
and furthermore $\tilde{V}^{(m)} = V^{(m)} = V^{(m)}\left( 4\hat{T}_{m}; \loss, \PXYR{\DISV_{m}} \right)$.
Otherwise, if $m \in S$ and $\hat{T}_{m} < \maxloss$, we have that
\begin{equation*}
|Q_{m}| \geq \max
\begin{cases}
\left(\frac{c_{0}}{\hat{T}_{m}}\right)^{2-\tsybb} \tsybcb \left( \vc(\G_{\F}) \Log\left(\frac{\maxloss^{2}}{\tsybcb}\left(\frac{|Q_{m}|}{\tsybcb \vc(\G_{\F})}\right)^{\frac{\tsybb}{2-\tsybb}}\right)+\hat{\sfun}(m)\right)\\
\frac{c_{0} \maxloss}{\hat{T}_{m}} \left( \vc(\G_{\F}) \Log\left(\frac{\maxloss^{2}}{\tsybcb} \left(\frac{|Q_{m}|}{\maxloss \vc(\G_{\F})}\right)^{\tsybb}\right)+\hat{\sfun}(m) \right)
\end{cases},
\end{equation*}
which implies
\begin{multline*}
|Q_{m}|
\geq \max\left\{ \left(\frac{c_{0}}{\hat{T}_{m}}\right)^{2-\tsybb} \tsybcb, \frac{c_{0} \maxloss}{\hat{T}_{m}} \right\} \left( \vc(\G_{\F}) \Log\left(\frac{\maxloss^{2}}{\tsybcb \hat{T}_{m}^{\tsybb}}\right)+\hat{\sfun}(m)\right)
\\ \geq \frac{c_{0}}{2} \left( \frac{\tsybcb}{\hat{T}_{m}^{2-\tsybb}} + \frac{\maxloss}{\hat{T}_{m}}\right) \left( \vc(\G_{\F}) \Log\left(\frac{\maxloss^{2}}{\tsybcb \hat{T}_{m}^{\tsybb}}\right)+\hat{\sfun}(m)\right).
\end{multline*}
Combined with \eqref{eqn:vc-strong-efficient-M-bound}, this implies that if we take $c_{0} \geq 2 \bar{c}$, and if $\target \!\in\! V^{(m)} \!\subseteq\! \F$, then
\begin{equation}
\label{eqn:vc-strong-efficient-Q-achieves-M}
|Q_{m}| \geq \sup_{\gamma \geq \hat{T}_{m}} \tilde{M}_{\loss}\left( \gamma/8, \gamma; V^{(m)}, \PXYR{\DISV_{m}}, \sfun(\hat{T}_{m},\gamma) \right).
\end{equation}
We now follow the derivation of localized risk bounds by \citet*{koltchinskii:06}. 
Specifically, applying Lemma~\ref{lem:koltchinskii} under the conditional distribution given $V^{(m)}$ and $|Q_{m}|$, combined with the law of total probability,
there is an event $E_{m}^{\prime\prime}$ of conditional probability at least $1 - 6 \sum_{j \in \ints_{\hat{T}_{m}}} e^{-\sfun(\hat{T}_{m},2^{j})}$ (given $V^{(m)}$ and $|Q_{m}|$),
such that on $E_{m}^{\prime\prime}$, if $m \in S$, $\target \in V^{(m)}$, and $\hat{T}_{m} < \maxloss$ (so that \eqref{eqn:vc-strong-efficient-Q-achieves-M} holds),
then $\forall j \in \ints_{\hat{T}_{m}}$, the following claims hold
for every $h \in V^{(m)}\left( 2^{j}; \loss, \PXYR{\DISV_{m}} \right)$.
\begin{align}
\Risk(h;\PXYR{\DISV_{m}}) - \Risk(\target; \PXYR{\DISV_{m}}) & \leq \Risk(h;Q_{m}) - \Risk(\target;Q_{m}) + 2^{j-3}, \label{eqn:vc-strong-efficient-kolt1}
\\ \Risk(h;Q_{m}) - \!\!\inf_{g \in V^{(m)}(2^{j};\loss,\PXYR{\DISV_{m}})} \!\!\Risk(g;Q_{m}) & \leq \Risk(h;\PXYR{\DISV_{m}}) - \Risk(\target; \PXYR{\DISV_{m}}) + 2^{j-3}. \label{eqn:vc-strong-efficient-kolt2}
\end{align}
Since $\sum_{j \in \ints_{\hat{T}_{m}}} e^{-\sfun(\hat{T}_{m},2^{j})} = e^{-\hat{\sfun}(m)} \sum_{j \in \ints_{\hat{T}_{m}}} 2^{-j} \hat{T}_{m} \leq 2 e^{-\hat{\sfun}(m)}$,
the law of total probability implies that there exists an event $\hat{E}_{m}$ of probability at least $1 - 12 e^{-\hat{\sfun}(m)}$, on which this implication holds.
In particular, for any $h_{0} \in V^{(m)}$ with $\Risk(h_{0};Q_{m}) - \Risk(\target;Q_{m}) \leq 0$, \eqref{eqn:vc-strong-efficient-kolt1} implies 
that for any $j \in \ints_{\hat{T}_{m}}$, if $\Risk(h_{0};\PXYR{\DISV_{m}})$ $- \Risk(\target;\PXYR{\DISV_{m}}) \leq 2^{j}$, then $\Risk(h_{0};\PXYR{\DISV_{m}}) - \Risk(\target;\PXYR{\DISV_{m}}) \leq 2^{j-3}$;
this inductively implies that\break $\Risk(h_{0};\PXYR{\DISV_{m}}) - \Risk(\target;\PXYR{\DISV_{m}}) \leq \hat{T}_{m}$, so that \eqref{eqn:vc-strong-efficient-kolt2} can more simply be stated as:
$\forall h \in V^{(m)}\left( 2^{j}; \loss, \PXYR{\DISV_{m}} \right)$, 
\begin{equation*}
\Risk(h;Q_{m}) - \inf_{g \in V^{(m)}} \Risk(g;Q_{m}) 
\leq \Risk(h;\PXYR{\DISV_{m}}) - \Risk(\target; \PXYR{\DISV_{m}}) + 2^{j-3}.
\end{equation*}
Furthermore, this implies
\begin{equation}
\label{eqn:vc-strong-efficient-f-saved}
\Risk(\target;Q_{m}) - \inf_{g \in V^{(m)}} \Risk(g;Q_{m}) \leq \hat{T}_{m},
\end{equation}
so that $\target \in \tilde{V}^{(m)}$ in this case as well.
Also, \eqref{eqn:vc-strong-efficient-kolt1} and the fact that $\target \in V^{(m)}$ further imply that for any $h \in V^{(m)}$ with $\Risk(h;Q_{m}) - \inf_{g \in V^{(m)}} \Risk(g;Q_{m}) \leq \hat{T}_{m}$,
for any $j \in \ints_{4\hat{T}_{m}}$, if $\Risk(h;\PXYR{\DISV_{m}}) - \Risk(\target;\PXYR{\DISV_{m}}) \leq 2^{j}$, then 
$\Risk(h;\PXYR{\DISV_{m}}) - \Risk(\target;\PXYR{\DISV_{m}}) \leq \hat{T}_{m} + 2^{j-3} \leq 2^{j-2} + 2^{j-3} \leq 2^{j-1}$;
this inductively implies that any such $h$ has $\Risk(h;\PXYR{\DISV_{m}}) - \Risk(\target;\PXYR{\DISV_{m}}) \leq 4\hat{T}_{m}$.
In particular, by definition of $\tilde{V}^{(m)}$, this implies $\tilde{V}^{(m)} \subseteq V^{(m)}\left( 4\hat{T}_{m}; \loss, \PXYR{\DISV_{m}} \right)$.
Since the inductive hypothesis implies $\target \in V^{(m)}$ on $\bigcap_{i=1}^{\log_{2}(m)-1} \hat{E}_{2^{i}}$ if $m \in S$, 
we have that on $\bigcap_{i=1}^{\log_{2}(m)} \hat{E}_{2^{i}}$, if $m \in S$, then $\target \in \tilde{V}^{(m)}$ and $\tilde{V}^{(m)} \subseteq V^{(m)}\left( 4\hat{T}_{m}; \loss, \PXYR{\DISV_{m}} \right)$,
which extends the inductive hypothesis.
By the principle of induction, we have established this claim for every $m$ with $\log_{2}(m) \in \nats$.

Let $\hat{j}_{\eps} = \left\lceil \log_{2}(\maxloss / \BJM(\eps)) \right\rceil$.
For each $j \in \nats \cup \{0\}$, let $\eps_{j} = \maxloss 2^{-j}$,\break 
$p_{j} = \Px\left(\DIS\left(\F\left(\BJM^{-1}\left(\eps_{j}\right);\zo\right)\right)\right)$, and $s_{j} = \log_{2}\left(\frac{192 (2 + \hat{j}_{\eps} - j)^{2}}{\conf}\right)$.
Let $m_{0} = 1$, and for each $j \in \nats$, define
\begin{equation*}
m_{j}^{\prime} = c^{\prime} \left(\frac{\tsybcb p_{j-1}^{1-\tsybb}}{\eps_{j}^{2-\tsybb}} + \frac{\maxloss}{\eps_{j}}\right) \left( \vc(\G_{\F}) \Log\left(\frac{\maxloss^{2} (c^{\prime})^{\tsybb} p_{j-1}^{\tsybb}}{\tsybcb \eps_{j}^{\tsybb}}\right)+s_{j} \right),
\end{equation*}
for an appropriate universal constant $c^{\prime} \in [1,\infty)$ (specified below),
and let $m_{j} = \max\left\{ 2 m_{j-1}, 2^{1+\lceil \log_{2}(m_{j}^{\prime}) \rceil} \right\}$.
Also, for every $j \in \nats$ and $m \in \{2 m_{j-1},\ldots,m_{j}\}$, define
\begin{equation*}
\hat{\sfun}(m) = \log_{2}\left( \frac{48 \log_{2}(4m_{j}/m)^{2} (2+\hat{j}_{\eps} - j)^{2}}{\conf} \right).
\end{equation*}
In particular, this definition implies $\hat{\sfun}(m_{j}) = s_{j}$.

We next prove by induction that there are events $\hat{E}_{j}^{\prime}$, for $j \in \nats \cup \{0\}$,
each with respective probability at least $1 - 2^{-s_{j}}$, such that 
for every $j \in \nats \cup \{0\}$, on\break $\bigcap_{i=1}^{\log_{2}(m_{j})} \hat{E}_{2^{i}} \cap \bigcap_{j^{\prime}=0}^{j} \hat{E}_{j^{\prime}}^{\prime}$,
if $m_{j} \in S \cup \{1\}$, then $\tilde{V}^{(m_{j})} \subseteq \F\left( \BJM^{-1}(\eps_{j}) ; \zo \right)$.
This claim is trivially satisfied for $j=0$, which therefore serves as the base case for this inductive proof.
Now fix any $j \in \nats$, and take as an inductive hypothesis that there exist events $\hat{E}_{j^{\prime}}^{\prime}$,
as above, for all $j^{\prime} < j$, such that on $\bigcap_{i=1}^{\log_{2}(m_{j-1})} \hat{E}_{2^{i}} \cap \bigcap_{j^{\prime}=0}^{j-1} \hat{E}_{j^{\prime}}^{\prime}$,
if $m_{j-1} \in S$, then $\tilde{V}^{(m_{j-1})} \subseteq \F\left( \BJM^{-1}(\eps_{j-1}) ; \zo \right)$.
By the above, we have that on $\bigcap_{i=1}^{\log_{2}(m_{j})} \hat{E}_{2^{i}}$, if $m_{j} \in S$, then
$\target \in \tilde{V}^{(m_{j})} \subseteq V^{(m_{j})}\left( 4 \hat{T}_{m_{j}}; \loss, \PXYR{\DISV_{m_{j}}} \right)$.
In particular, this implies that every $h \in \tilde{V}^{(m_{j})}$ has 
\begin{multline}
\label{eqn:vc-strong-efficient-truncated-risk}
\Risk(h_{\DISV_{m_{j}}};\PXY) - \Risk(\target;\PXY)
= \left( \Risk(h;\PXYR{\DISV_{m_{j}}}) - \Risk(\target;\PXYR{\DISV_{m_{j}}}) \right) \Px(\DISV_{m_{j}})
\\ \leq 4 \hat{T}_{m_{j}} \Px(\DISV_{m_{j}}).
\end{multline}
By a Chernoff bound and the law of total probability, on an event $\hat{E}_{j}^{\prime}$ of probability at least $1-2^{-s_{j}}$, if $m_{j} \in S$, 
\begin{equation}
\label{eqn:vc-strong-efficient-Q-LB}
(1/2) m_{j} \Px(\DISV_{m_{j}}) - \sqrt{ s_{j} m_{j} \Px(\DISV_{m}) } \leq |Q_{m_{j}}|.
\end{equation}
If $m_{j} \in S$ and $\Px(\DISV_{m_{j}}) \leq \frac{16s_{j}}{m_{j}}$, then 
$4 \hat{T}_{m_{j}} \Px(\DISV_{m_{j}}) \leq \frac{64 \maxloss s_{j}}{m_{j}} \leq \frac{32 \eps_{j}}{c^{\prime}}$,
so that with any $c^{\prime} \geq 32$, \eqref{eqn:vc-strong-efficient-truncated-risk} would give $\Risk(h_{\DISV_{m_{j}}};\PXY) - \Risk(\target;\PXY) \leq \eps_{j}$.
Otherwise, \eqref{eqn:vc-strong-efficient-Q-LB} implies that on $\hat{E}_{j}^{\prime}$,
if $m_{j} \in S$ and $\Px(\DISV_{m_{j}}) > \frac{16s_{j}}{m_{j}}$, then 
$|Q_{m_{j}}| \geq (1/4) m_{j} \Px(\DISV_{m_{j}})$.
In this latter case, we have
\begin{multline}
\label{eqn:vc-strong-efficient-TPDIS-raw-bound}
4 \hat{T}_{m_{j}} \Px(\DISV_{m_{j}}) \leq 
\\ 16 c_{0} \max
\begin{cases}
\Px(\DISV_{m_{j}})^{\frac{1-\tsybb}{2-\tsybb}} \left(\frac{\tsybcb}{m_{j}} \left( \vc(\G_{\F}) \Log\!\left(\frac{\maxloss^{2}}{\tsybcb} \left(\frac{ m_{j} \Px(\DISV_{m_{j}}) }{4\tsybcb\vc(\G_{\F})}\right)^{\frac{\tsybb}{2-\tsybb}}\right)+s_{j}\right)\right)^{\frac{1}{2-\tsybb}}
\\ \frac{\maxloss}{m_{j}} \left( \vc(\G_{\F}) \Log\!\left( \frac{\maxloss^{2}}{\tsybcb}\left(\frac{m_{j} \Px(\DISV_{m_{j}})}{4\maxloss\vc(\G_{\F})}\right)^{\tsybb}\right)+s_{j}\right)
\end{cases}\!\!\!\!\!\!\!\!.
\end{multline}
Since $m_{j} \geq 2m_{j-1}$, by the inductive hypothesis, on $\bigcap_{i=1}^{\log_{2}(m_{j-1})} \hat{E}_{2^{i}} \cap \bigcap_{j^{\prime}=0}^{j-1} \hat{E}_{j^{\prime}}^{\prime}$, 
if $m_{j} \in S$, we have $V^{(m_{j})} \subseteq \tilde{V}^{(m_{j-1})} \subseteq \F\left( \BJM^{-1}(\eps_{j-1}) ; \zo \right)$,
which implies $\Px(\DISV_{m_{j}}) \leq \Px\left(\DIS\left(\F\left(\BJM^{-1}\left(\eps_{j-1}\right) ; \zo\right)\right)\right) = p_{j-1}$.
In this case, the right hand side of \eqref{eqn:vc-strong-efficient-TPDIS-raw-bound} is at most
\begin{equation*}
16 c_{0} \max
\begin{cases}
p_{j-1}^{\frac{1-\tsybb}{2-\tsybb}} \left(\frac{\tsybcb}{m_{j}} \left( \vc(\G_{\F}) \Log\left(\frac{\maxloss^{2}}{\tsybcb} \left(\frac{ m_{j} p_{j-1} }{4\tsybcb\vc(\G_{\F})}\right)^{\frac{\tsybb}{2-\tsybb}}\right)+s_{j}\right)\right)^{\frac{1}{2-\tsybb}}
\\ \frac{\maxloss}{m_{j}} \left( \vc(\G_{\F}) \Log\left( \frac{\maxloss^{2}}{\tsybcb}\left(\frac{m_{j} p_{j-1}}{4\maxloss\vc(\G_{\F})}\right)^{\tsybb}\right)+s_{j}\right)
\end{cases}.
\end{equation*}
The value of $m_{j}^{\prime}$ was defined to make this value at most $\eps_{j}$, with any value of $c^{\prime} \geq 16 c_{0}$.
Altogether, we have that on $\bigcap_{i=1}^{\log_{2}(m_{j})} \hat{E}_{2^{i}} \cap \bigcap_{j^{\prime}=0}^{j} \hat{E}_{j^{\prime}}^{\prime}$,
if $m_{j} \in S$, then every $h \in \tilde{V}^{(m_{j})}$ has $\Risk(h_{\DISV_{m_{j}}};\PXY) - \Risk(\target;\PXY) \leq \eps_{j}$;
in particular, this also implies every $h \in \tilde{V}^{(m_{j})}$ has $\er(h_{\DISV_{m_{j}}}) - \er(\target) \leq \BJM^{-1}(\eps_{j})$.
Since we have already proven that $\target \in V^{(m_{j})}$ on this event, and since $\tilde{V}^{(m_{j})} \subseteq V^{(m)}$,
we have that every $h \in \tilde{V}^{(m)}$ has $\er(h) = \er(h_{\DISV_{m}})$, which therefore implies $\er(h) - \er(\target) \leq \BJM^{-1}(\eps_{j})$:
that is, $\tilde{V}^{(m_{j})} \subseteq \F\left( \BJM^{-1}(\eps_{j}) ; \zo \right)$.
This completes the inductive proof.

The above result implies that, on $\bigcap_{i=1}^{\log_{2}(m_{\hat{j}_{\eps}})} \hat{E}_{2^{i}} \cap \bigcap_{j=0}^{\hat{j}_{\eps}} \hat{E}_{j}^{\prime}$,
if $m_{\hat{j}_{\eps}} \in S$, then $\er(\hat{h}) - \er(\target) \leq \BJM^{-1}( \eps_{\hat{j}_{\eps}} ) \leq \BJM^{-1}\left( \BJM(\eps) \right) = \eps$.
In particular, we are guaranteed to have $m_{\hat{j}_{\eps}} \in S$ as long as $u \geq m_{\hat{j}_{\eps}}$ and 
\begin{equation}
\label{eqn:vc-strong-efficient-basic-n-bound}
n > \sum_{i=1}^{\log_{2}(m_{\hat{j}_{\eps}})} \sum_{m=2^{i-1}+1}^{\min\left\{2^{i},\max S\right\}} \ind_{\DIS\left(\tilde{V}^{(2^{i-1})}\right)}(X_{m}).
\end{equation}
By monotonicity of $m \mapsto \DIS\left(\tilde{V}^{(m)}\right)$,
the right hand side of \eqref{eqn:vc-strong-efficient-basic-n-bound} is at most
\begin{equation*}
\sum_{j=0}^{\hat{j}_{\eps}} \sum_{m=m_{j-1}+1}^{\min\left\{m_{j},\max S\right\}} \ind_{\DIS\left(\tilde{V}^{(m_{j-1})}\right)}(X_{m}).
\end{equation*}
Furthermore, on $\bigcap_{i=1}^{\log_{2}(m_{\hat{j}_{\eps}})} \hat{E}_{2^{i}} \cap \bigcap_{j=0}^{\hat{j}_{\eps}} \hat{E}_{j}^{\prime}$, 
the above result implies this is at most
\begin{multline*}
\sum_{j=1}^{\hat{j}_{\eps}} \sum_{m=m_{j-1}+1}^{\min\left\{m_{j},\max S\right\}} \ind_{\DIS\left(\F\left( \BJM^{-1}(\eps_{j-1});\zo\right)\right)}(X_{m})
\\ \leq \sum_{j=1}^{\hat{j}_{\eps}} \sum_{m=m_{j-1}+1}^{m_{j}} \ind_{\DIS\left(\F\left( \BJM^{-1}(\eps_{j-1});\zo\right)\right)}(X_{m}).
\end{multline*}
By a Chernoff bound, on an event $\hat{E}^{\prime\prime}$ of probability at least $1-\conf/2$, 
the right hand side of the above is at most
\begin{equation}
\label{eqn:vc-strong-efficient-abstract-query-bound}
\log_{2}(2/\conf) + \sum_{j=1}^{\hat{j}_{\eps}} (m_{j}-m_{j-1}) p_{j-1}.
\end{equation}
Since $\eps_{j-1} \geq \maxloss 2^{1-\hat{j}_{\eps}} \geq \BJM(\eps)$,
and therefore 
\begin{multline*}
p_{j-1} 
\leq \Px\left(\DIS\left( \Ball\left(\target, \tsybca \BJM^{-1}(\eps_{j-1})^{\tsyba}\right) \right) \right) 
\\ \leq \dc\left( \tsybca \BJM^{-1}(\eps_{j-1})^{\tsyba} \right) \tsybca \BJM^{-1}(\eps_{j-1})^{\tsyba} 
\leq \dc\left( \tsybca \eps^{\tsyba} \right) \tsybca \BJM^{-1}(\eps_{j-1})^{\tsyba},
\end{multline*}
letting $\hat{c}_{j} = \vc(\G_{\F}) \Log\left(\frac{\maxloss^{2}}{\tsybcb}\left(\frac{c^{\prime} \dc \tsybca \BJM^{-1}(\eps_{j-1})^{\tsyba}}{\eps_{j}}\right)^{\tsybb}\right)$,
we have that 
\begin{equation}
\label{eqn:vc-strong-efficient-mprime-bound}
2^{1+\lceil \log_{2}(m_{j}^{\prime}) \rceil} \leq 
4 c^{\prime} \left(\frac{\tsybcb}{\eps_{j}} \left(\frac{\dc \tsybca \BJM^{-1}(\eps_{j-1})^{\tsyba}}{\eps_{j}}\right)^{1-\tsybb} + \frac{\maxloss}{\eps_{j}}\right) \left( \hat{c}_{j}+s_{j} \right).
\end{equation}
Since $\BJM^{-1}(\eps_{j-1})^{\tsyba}/\eps_{j}$ is nondecreasing in $j$, 
the right hand side of \eqref{eqn:vc-strong-efficient-mprime-bound} 
at least doubles when $j$ is increased by one, so that by induction we have that
the right hand side of \eqref{eqn:vc-strong-efficient-mprime-bound} is also an upper bound on $m_{j}$.
This fact also implies that $\hat{c}_{j} + s_{j}$ is at most
\begin{equation*}
\vc(\G_{\F}) \Log\!\left(\frac{\maxloss^{2}}{\tsybcb} \left(\frac{2c^{\prime} \dc \tsybca \BJM^{-1}(2\BJM(\eps))^{\tsyba}}{\BJM(\eps)}\right)^{\tsybb}\right)+\Log\!\left(\frac{192}{\conf}\right) + 2\Log\!\left(2+\hat{j}_{\eps}-j\right),
\end{equation*}
and the fact that $x \mapsto \BJM^{-1}(x)/x$ is nonincreasing implies this is at most
\begin{multline*}
\vc(\G_{\F}) \Log\left(\frac{\maxloss^{2}}{\tsybcb} \left(\frac{4c^{\prime} \dc \tsybca \eps^{\tsyba}}{\BJM(\eps)}\right)^{\tsybb}\right)+\Log\left(\frac{192}{\conf}\right) + 2\Log\left(2+\hat{j}_{\eps}-j\right)
\\ \leq c^{\prime\prime} \left( A_{2} + \Log\left(2+\hat{j}_{\eps}-j\right) \right).
\end{multline*}
where $c^{\prime\prime} = \ln\left( 768 e c^{\prime} \right)$.
Furthermore, 
\begin{multline*}
\frac{\BJM^{-1}(\eps_{j-1})^{\tsyba}}{\eps_{j}}
= 2\frac{\BJM^{-1}( 2^{(\hat{j}_{\eps}-j)} \eps_{\hat{j}_{\eps}-1} )^{\tsyba}}{2^{(\hat{j}_{\eps}-j)} \eps_{\hat{j}_{\eps}-1}}
\\ \leq 2\frac{\BJM^{-1}( 2^{(\hat{j}_{\eps}-j)} \BJM(\eps))^{\tsyba}}{2^{(\hat{j}_{\eps}-j)} \BJM(\eps)}
\leq 2^{1+(\hat{j}_{\eps}-j)(\tsyba-1)} \frac{\eps^{\tsyba}}{\BJM(\eps)}.
\end{multline*}
Applying these inequalities to bound $m_{j} p_{j-1}$, and reversing the order of summation (now summing over $i=\hat{j}_{\eps}-j$), we have that
\begin{align*}
\sum_{j=1}^{\hat{j}_{\eps}} m_{j} p_{j-1} 
& \leq 16 c^{\prime} c^{\prime\prime} \sum_{i=0}^{\hat{j}_{\eps}-1} \tsybcb \left(\frac{\tsybca \dc 2^{i(\tsyba-1)} \eps^{\tsyba}}{\BJM(\eps)}\right)^{2-\tsybb} \left( A_{2} + \Log(i+2) \right)
\\ & + 16 c^{\prime} c^{\prime\prime} \sum_{i=0}^{\hat{j}_{\eps}-1} \frac{\maxloss \tsybca \dc 2^{i(\tsyba-1)} \eps^{\tsyba}}{\BJM(\eps)} \left( A_{2} + \Log(i+2) \right).
\end{align*}
Note that this is of the same form as \eqref{eqn:vc-subgraph-active-strong-summation} in the proof of Theorem~\ref{thm:vc-subgraph-active-strong},
so that following that proof, the right hand side above is at most
\begin{equation*}
144 c^{\prime} c^{\prime\prime}
\left( 
\tsybcb (A_{2} + \Log(C_{1}) C_{1} \left( \frac{\dc \tsybca \eps^{\tsyba}}{\BJM(\eps)} \right)^{2-\tsybb} 
+ \maxloss (A_{2}+\Log(C_{1})) C_{1} \left( \frac{\dc \tsybca \eps^{\tsyba}}{\BJM(\eps)} \right)
\right).
\end{equation*}
Therefore, since $\log_{2}(2/\conf) \leq 3 A_{2}$, \eqref{eqn:vc-strong-efficient-abstract-query-bound} is less than
\begin{equation*}
147 c^{\prime} c^{\prime\prime}\left( \tsybcb (A_{2} + \Log(C_{1}) C_{1} \left( \frac{\dc \tsybca \eps^{\tsyba}}{\BJM(\eps)} \right)^{2-\tsybb} + \maxloss (A_{2}+\Log(C_{1})) C_{1} \left( \frac{\dc \tsybca \eps^{\tsyba}}{\BJM(\eps)} \right) \right).
\end{equation*}
The above inequalities also imply that
\begin{equation*}
m_{\hat{j}_{\eps}} \leq
32 c^{\prime} c^{\prime\prime} \left(\frac{\tsybcb \left(\dc \tsybca \eps^{\tsyba}\right)^{1-\tsybb}}{\BJM(\eps)^{2-\tsybb}} + \frac{\maxloss}{\BJM(\eps)}\right) A_{2}.
\end{equation*}
Thus, taking $c = 147 c^{\prime} c^{\prime\prime}$ in the statement of Theorem~\ref{thm:vc-subgraph-active-strong} suffices to guarantee 
that, for any $u$ and $n$ satisfying the given size constraints, 
$u \geq m_{\hat{j}_{\eps}}$, and on the event 
$\bigcap_{i=1}^{\log_{2}(m_{\hat{j}_{\eps}})} \hat{E}_{2^{i}} \cap \bigcap_{j=0}^{\hat{j}_{\eps}} \hat{E}_{j}^{\prime} \cap \hat{E}^{\prime\prime}$,
\eqref{eqn:vc-strong-efficient-basic-n-bound} is satisfied, 
which (as discussed above) implies $\er(\hat{h})-\er(\target) \leq \eps$ on this event.
We complete the proof by noting that, by a union bound, the event 
$\bigcap_{i=1}^{\log_{2}(m_{\hat{j}_{\eps}})} \hat{E}_{2^{i}} \cap \bigcap_{j=0}^{\hat{j}_{\eps}} \hat{E}_{j}^{\prime} \cap \hat{E}^{\prime\prime}$
has probability at least
\begin{equation*}
1 - \sum_{i=1}^{\log_{2}(m_{\hat{j}_{\eps}})} 12 e^{-\hat{\sfun}(2^{i})} - \sum_{j=0}^{\hat{j}_{\eps}} 2^{-s_{j}} - \frac{\conf}{2},
\end{equation*}
which is greater than $1-\conf$, since
\begin{align*}
& \sum_{i=1}^{\log_{2}(m_{\hat{j}_{\eps}})} 12 e^{-\hat{\sfun}(2^{i})} 
\leq \sum_{j=1}^{\hat{j}_{\eps}} \sum_{i=\log_{2}(m_{j-1})+1}^{\log_{2}(m_{j})} \frac{\conf}{4 \log(4 m_{j} / 2^{i})^{2} (2+\hat{j}_{\eps}-j)^{2}}
\\ & \leq \sum_{j=1}^{\hat{j}_{\eps}} \sum_{k=0}^{\infty} \frac{\conf}{4 (2 + k)^{2} (2+\hat{j}_{\eps}-j)^{2}}
\leq \sum_{j=1}^{\hat{j}_{\eps}} \frac{\conf}{4 (2+\hat{j}_{\eps}-j)^{2}}
\leq \sum_{k=0}^{\infty} \frac{\conf}{4 (2+k)^{2}}
\leq \frac{\conf}{4},
\end{align*}
and
$\sum_{j=0}^{\hat{j}_{\eps}} 2^{-s_{j}}
\leq \sum_{j=0}^{\hat{j}_{\eps}} \frac{\conf}{192(2+\hat{j}_{\eps}-j)^{2}}
\leq \sum_{k=0}^{\infty} \frac{\conf}{192(2+k)^{2}}
\leq \frac{\conf}{192}$.

\section{Remarks on the Assumption that $\texorpdfstring{\MakeLowercase{f}}{f}^{\star} \in \F$}
\label{app:fstar-assumption}

We conclude with some remarks on the assumption that $\target \in \F$ (used throughout this article).
As noted in Section~\ref{subsec:surrogate-losses}, 
this assumption is often \emph{very} strong.  
While the specific assumption that $\target \in \F$ 
adds a certain elegance to the theory developed in this work, one natural question is to what extent it can 
be relaxed without changing the essence of the approach considered here.
For instance, in passive learning, one can generalize the abstract results on 
empirical risk minimization (stated in Theorem~\ref{thm:erm}) to hold under the weaker condition
that $\argmin_{h \in \F} \Risk(h) = \argmin_{h \in \F} \er(h)$.  However, this simple
relaxation appears insufficient for the approach to active learning considered 
here.  Specifically, for our analysis, we would require that an error minimizer $\argmin_{h \in \F} \er(h)$
also be an (approximate) minimizer of 
$\Risk(h;P)$ in $\F$,
not merely for $P = \PXY$, but also for certain conditional distributions 
$\PXY( \cdot | \DIS(V)\times\Y )$,
for sets $V \subseteq \F$ arising in the algorithm.  In principle, 
the results in this work can be generalized to 
provide guarantees when this condition (suitably formalized) is satisfied.  
However, the statements of the results become considerably more involved, 
and moreover we do not know of concise, general, \emph{a priori} conditions on $\F$, $\loss$, and $\PXY$, 
under which this property will hold.
Beyond this,
it appears our analysis does not easily extend to 
the important problem of active learning with surrogate losses
in the \emph{general} case, where
results would presumably need to be expressed in terms of the approximation
loss $\inf_{f \in \F} \Risk(f) - \Risk(\target)$ or related quantities (as observed 
for passive learning \citep*{bartlett:06}).  
It seems such a generalization would require a significantly different approach.

\ignore{\begin{supplement}[id=supp]
\stitle{Supplement to ``Surrogate Losses in Passive and Active Learning''}
\slink[doi]{...}
\slink[url]{http://www.imstat.org/aos/...}
\sdatatype{.pdf}
\sdescription{The supplementary material contains several appendices.
Appendix A provides detailed proofs of all of the general results of Section~\ref{subsec:abstract}.
Appendix B provides the details of the proofs for results in Section~\ref{sec:explicit} 
(Theorems \ref{thm:vc-subgraph-abstract} and \ref{thm:vc-subgraph-active-strong}),
and certain derivations for the example in Section~\ref{subsec:example}.
Appendix C briefly describes the applications of the results above to
VC Major and VC Hull classes.  Appendix D describes more computationally-accessible 
variants of the $\hat{T}_{\loss}(V;Q,m)$ function used in \ACAL,
suitable to maintain the guarantees in each of the specialized theorems, 
and Appendix E provides proofs related to this claim for two of these
scenarios (corresponding to Theorems~\ref{thm:vc-subgraph-abstract} and \ref{thm:vc-subgraph-active-strong}).
}
\end{supplement}
}

\end{document}